\documentclass[12pt,leqno]{amsart}
\usepackage[total={6in,9in},
top=1in, left=1in, right=1in, bottom=1in]{geometry}
\usepackage{amsmath}
\usepackage[italian, english]{babel}
\usepackage{amsfonts}
\usepackage{amssymb}
\usepackage{dsfont}
\usepackage{mathrsfs}
\usepackage{graphicx}
\usepackage{float}
\usepackage{fancyhdr}
\usepackage{multirow}
\usepackage{multicol}
\usepackage{enumitem}
\usepackage{amsthm}
\usepackage{empheq}
\usepackage{cases}
\usepackage[all]{xy}
\usepackage{stmaryrd}
\usepackage[colorlinks, citecolor=blue, linkcolor=red, pagebackref=true]{hyperref}
\usepackage{mathrsfs}
\usepackage{tikz,tikz-cd}
\usepackage[font=small,labelfont=bf]{caption}
\usepackage[pagewise]{lineno}

\makeatletter
\def\l@subsection{\@tocline{2}{0pt}{2.5pc}{4pc}{}}
\makeatother

\makeatletter
\newcommand{\tocless}[2]{\begingroup\let\addcontentsline\@gobblethree#1{#2}\endgroup}
\makeatother

\def\SS{{{\mathbb S}}}
\def\NN{{{\mathbb N}}}

\def\RR{{\mathbb R}}

\tikzset{
	subset/.style={
		draw=none,
		edge node={node [sloped, allow upside down, auto=false]{$\subset$}}},
	Subset/.style={
		draw=none,
		every to/.append style={
			edge node={node [sloped, allow upside down, auto=false]{$\subset$}}}
	}
}
\tikzset{
	labl/.style={anchor=south, rotate=90, inner sep=.50mm}
}

\newcommand{\erre}{\mathds{R}}

\newcommand{\cinf}{C^{\infty}(M)}

\newcommand{\del}[1]{\delta_{#1}}

\newcommand{\riem}{\operatorname{Riem}}
\newcommand{\ricc}{\operatorname{Ric}}
\newcommand{\weyl}{\operatorname{W}}
\newcommand{\bach}{\operatorname{B}}
\newcommand{\diver}{\operatorname{div}}
\newcommand{\hess}{\operatorname{Hess}}

\newcommand\restrict[1]{\raisebox{-.5ex}{$|$}_{#1}}

\newcommand{\KN}{\mathbin{\bigcirc\mspace{-15mu}\wedge\mspace{3mu}}}

\newcommand{\longra}{\longrightarrow}

\newcommand{\Longra}{\Longrightarrow}

\newcommand{\set}[1]{{\left\{#1\right\}}}               
\newcommand{\pa}[1]{{\left(#1\right)}}                  
\newcommand{\sq}[1]{{\left[#1\right]}}                  
\newcommand{\abs}[1]{{\left|#1\right|}}                 
\newcommand{\norm}[1]{{\left\|#1\right\|}}              

\newcommand{\eps}{\varepsilon}                           

\newcommand{\ol}[1]{\overline{#1}}

\renewcommand{\tilde}[1]{\widetilde{#1}}

\newcommand{\pmat}[1]{{\begin{pmatrix}#1\end{pmatrix}}} 

\newtheorem{theorem}{\textbf{Theorem}}[section]
\newtheorem{lemma}[theorem]{\textbf{Lemma}}
\newtheorem{proposition}[theorem]{\textbf{Proposition}}
\newtheorem{cor}[theorem]{\textbf{Corollary}}

\theoremstyle{remark}
\newtheorem{rem}[theorem]{\textbf{Remark}}

\numberwithin{equation}{section}
\title[On the Bach tensor and quadratic curvature functionals]
{
	On the Bach tensor and quadratic curvature functionals
}

\date{\today}
\keywords{Special metrics, four-dimensional manifolds, Bach tensor, Riemannian functionals, weak Bach-parallel metrics, Bach-flat metrics, HCMU metrics}

\subjclass[2020]{Primary 53C21; Secondary 53C25, 58E11, 53B21, 35J15}

\begin{document}
	\maketitle
	
	\date{\today}
	
	\begin{center}
		\textsc{\textmd{Letizia Branca\footnote{Università degli Studi di Milano, Italy. Email: letizia.branca@unimi.it} and
				Davide Dameno \footnote{Universit\`{a} degli Studi di Torino, Italy.
					Email: davide.dameno@unito.it.}. }}
	\end{center}

	\begin{abstract}
		We study the critical points of a quadratic functional depending on the gradient of the Bach tensor on Riemannian four-manifolds, which generalize the Bach-flat condition. We show that, on every closed four-manifold, there exists a weak Bach-parallel metric, i.e. a critical point for this functional with respect to conformal variations: in particular, we prove that there exist infinitely many conformal classes which contain a unique minimizer for the functional, up to constant positive rescaling. Next, we analyze the global minima of the functional, i.e. metrics with parallel Bach tensor, relating these metrics to well-known variational problems. Using a version of de Rham's splitting theorem on complete four-manifolds, we provide a classification result for products of surfaces, exploiting the theory of conformal gradient solitons; we also construct a new explicit example of a Bach-flat metric which is neither locally conformally flat nor conformally Einstein and we characterize HCMU metrics on complete surfaces. Finally, we prove an equivalence between the Bach-parallel condition on 4D cylinders and the existence of critical metrics for a well-known quadratic curvature functional in dimension three: in this direction, we also prove a characterization of flat three-manifolds, under some curvature and finite energy assumptions. 
	\end{abstract}
	
	\tableofcontents

	\section{Introduction and main results}
	Let $(M,g)$ be a closed, Riemannian four-manifold: we can define the so-called \emph{Weyl functional} as the squared $L^2$-norm of the \emph{Weyl tensor} $\weyl_g$ (the trace-free, conformally invariant part of the Riemann curvature $\riem$), i.e. 
	\[
	\mathfrak{W}(g):=\int_M\abs{\weyl_g}_g^2dV_g.
	\]
	It is well known that this functional is 
	conformally invariant and its critical points are determined by the Euler--Lagrange equation \cite{besse}
	\[
	\bach_g=0,
	\] 
	where $\bach_g$ is the so-called \emph{Bach tensor}
	\cite{Bach1921}: this quantity is a trace-free, symmetric $(0,2)$-tensor which, due to the conformal properties of $\mathfrak{W}$, also happens to be conformally covariant in dimension four, i.e. 
	$e^{2u}\bach_{\tilde{g}}=\bach_g$, if 
	$\tilde{g}=e^{2u}g$, where $u\in\cinf$. We say that a Riemannian metric $g$ is \emph{Bach-flat} if $\bach_g=0$ on $M$, i.e., if it is a critical point of $\mathfrak{W}$: we remark that the Bach tensor $\bach_g$ (and, consequently, Bach-flat metrics) can be defined in every dimension $n\geq 4$, although
	$\bach_g$ loses its conformal covariance property and Bach-flat metrics are not critical points of $\norm{\weyl}_{L^{n/2}(M)}^{n/2}$ if $n>4$.\\
	Examples of Bach-flat metrics in dimension four are provided by \emph{conformally Einstein metrics} and \emph{half conformally flat metrics} \cite{besse}; in the literature, there are very few examples which do not belong to these categories (sometimes called \emph{strictly Bach-flat}), for instance the ones constructed in \cite{abbena} (see also \cite{garciariobach}) and in \cite{garciariosoliton, leistner}. 
	Up to now, no topological obstruction is known on four-dimensional manifolds for the existence of Bach-flat metrics; however, some rigidity and classification results have been provided under some additional assumptions, e.g. for gradient Ricci solitons, in the K\"ahler setting or under suitable curvature conditions (see, for instance, \cite{caochencat, caochen, changgurskyyang, lebrunbach, tianviac2, tianviac, zhang}). On the other hand, in \cite{gurskyviac}, Gursky and Viaclovsky managed to construct examples of critical metrics arising as critical points of the Weyl functional perturbed with a quadratic scalar curvature term, known as $B^t$-flat metrics. 
	In the opposite direction, some results concerning the existence of Riemannian metrics whose Bach tensor satisfies suitable properties have been proven recently: 
	indeed, in \cite{cdmbach}, the second author, together with Catino and Mastrolia, exploited a deformation argument due to Aubin \cite{Aubin} to show that every compact, smooth four-manifold admits a Riemannian metric with nowhere vanishing Bach tensor, while in \cite{smallbach} the authors, together with Catino, proved that, on every closed four-dimensional manifold, there
	exists a metric whose Bach tensor is $\eps$-pinched by the scalar curvature.
	
	The purpose of this paper is to analyze a new curvature condition, which generalize the notion of Bach flatness, and to prove that metrics which satisfy this property are not obstructed. Namely, given a Riemannian four-manifold $(M,g)$, we introduce 
	a scaling invariant functional only depending on 
	the Bach tensor, defined as 
	\[
	\mathcal{B}(g):=\mathrm{Vol}_g(M)^{\frac{3}{2}}\int_M\abs{\nabla\bach_g}_g^2dV_g,
	\]
	and we ask whether a critical point for $\mathcal{B}$ always
	exists on every closed four-manifold.
	This
	definition is inspired by the functional considered by 
	Catino, Mastrolia, Monticelli and Punzo \cite{cmmp}, i.e. the normalized $L^2$-norm of the divergence of $\weyl_g$: we point out that we cannot completely adapt their definition to our setting, since, in dimension four, the Bach tensor is always divergence-free, hence we take into account its full covariant derivative. Following \cite{cmmp}, 
	we want to consider conformal variations of the functional $\mathcal{B}$ and to show that there always exists a conformal class of metrics $[g]$ which contains a minimizer for $\mathcal{B}$, on every closed four-manifold. We say that a metric is \emph{weak Bach-parallel} if it is a critical point of $\mathcal{B}$ for such variations, mirroring the definition of weak harmonic Weyl metrics given in \cite{cmmp}: it is clear that all Bach-flat metrics (or, more generally, \emph{Bach-parallel metrics}, i.e. the ones with 
	$\nabla\bach\equiv 0$) are minima for full variations of $\mathcal{B}$, which, in particular, means that they are weak Bach-parallel. Although the possible topological obstructions to Bach flatness are not known yet, we manage to prove an existence result for weak Bach-parallel metrics:
	
	\begin{theorem} \label{weakbachparallel}
		Let $M$ be a closed, smooth four-manifold: then, there exists a weak Bach-parallel metric $g$ on $M$. More precisely, there exist infinitely many conformal classes $[\ol{g}]$ which contain a minimizer of $\mathcal{B}$ with respect to conformal variations. Moreover, the minimizer is unique, up to multiplication by a positive constant.
	\end{theorem}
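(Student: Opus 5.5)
The plan is to exploit the conformal covariance of the Bach tensor in dimension four, following the strategy of \cite{cmmp} for weak harmonic Weyl metrics. Fix a background metric $\ol{g}$ and write $g=e^{2u}\ol{g}$, $u\in\cinf$. Since $\bach_g=e^{-2u}\bach_{\ol g}$, the covariant derivative $\nabla^g\bach_g$ can be written in terms of $\ol\nabla$, $\bach_{\ol g}$, $u$ and $du$. Differentiating the conformal change of Christoffel symbols gives
\[
\nabla^g_k \bach_{ij}=e^{-2u}\bigl(\ol\nabla_k\bach_{ij}-4u_k\bach_{ij}-u_i\bach_{kj}-u_j\bach_{ik}+\ol g_{ki}u^p\bach_{pj}+\ol g_{kj}u^p\bach_{ip}\bigr),
\]
with everything on the right computed with respect to $\ol g$. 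Contracting with $g$, which contributes a factor $e^{-6u}$, and using $dV_g=e^{4u}dV_{\ol g}$, we obtain
\[
\int_M\abs{\nabla\bach_g}^2_g\,dV_g=\int_M e^{-6u}\,Q_{\ol g}(du)\,dV_{\ol g},\qquad \mathrm{Vol}_g(M)=\int_M e^{4u}dV_{\ol g}.
\]
Here $Q_{\ol g}(du)=\abs{\ol\nabla\bach-T(du)}^2$ is a quadratic polynomial in $du$ whose coefficients are built from $\bach_{\ol g}$ and $\ol\nabla\bach_{\ol g}$. The functional $\mathcal{B}$ restricted to $[\ol g]$ therefore becomes $F(u)=\bigl(\int e^{4u}\bigr)^{3/2}\int e^{-6u}Q(du)$. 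It is invariant under $u\mapsto u+c$, which is the source of the ``up to constants'' statement.

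The key step is to choose $\ol g$ so that $F$ becomes a tractable, essentially convex problem. By \cite{cdmbach}, every closed four-manifold carries a metric with nowhere vanishing Bach tensor. Perturbing this metric, or working in a suitable open set of such metrics, will give infinitely many conformal classes. Substituting $w=e^{-3u}$ turns $e^{-6u}Q(du)$ into $\abs{\tfrac13 A(dw)+w\,\ol\nabla\bach}^2$, where $A$ is the linear map $du\mapsto T(du)$ rescaled. This is a quadratic form in $(w,dw)$. Its principal part $\abs{A(dw)}^2$ is coercive, i.e.\ $\geq c\abs{dw}^2$, provided the map $\xi\mapsto T(\xi)$ is injective. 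Since $\bach$ is trace-free, symmetric and nonzero, $T$ should be injective at each point. I would verify this by diagonalizing $\bach$ and checking directly, using $\bach\neq0$ everywhere. The functional then reads
\[
F=\Bigl(\int w^{-4/3}\Bigr)^{3/2}\int \abs{\tfrac13A(dw)+w\,\ol\nabla\bach}^2 .
\]
If $\ol\nabla\bach_{\ol g}\equiv0$, the minimum is attained at constant $w$. In general, I would further choose the representative $\ol g$ so that the zeroth-order coefficient is controlled.

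Existence would then follow by the direct method. Take a minimizing sequence of positive $w$, normalized so that $\int w^{-4/3}=1$. Coercivity bounds $\norm{dw}_{L^2}$. A lower bound on $w$ follows from the normalization combined with Sobolev/Moser arguments applied to the Euler--Lagrange equation. That equation is a linear elliptic system-type equation $\diver(A^*A\,dw)+\dots=\lambda w^{-7/3}$, so elliptic regularity gives smoothness and positivity of the limit. Uniqueness up to constants should come from strict convexity. The map $w\mapsto\int\abs{\tfrac13A(dw)+w\,\ol\nabla\bach}^2$ is a convex quadratic, and $w\mapsto\int w^{-4/3}$ is strictly convex. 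Along the scaling-adapted path between two minimizers, or after rewriting with the constraint, one obtains that the Rayleigh-type quotient has a unique minimizer modulo scaling, in the spirit of a principal eigenvalue argument, via positivity and the Harnack inequality.

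I expect the main obstacle to be the lower-order term $w\,\ol\nabla\bach$ together with the nonstandard constraint. These could spoil coercivity or permit minimizing sequences with $w\to0$ somewhere, i.e.\ $u\to\infty$ and conformal blow-up. My first attempt would be to select $\ol g$ among the metrics of \cite{cdmbach}, modified by an Aubin-type deformation, so that $\abs{\ol\nabla\bach}$ is small relative to $\abs{\bach}$. The quadratic form would then be uniformly positive definite in $(w,dw)$. This selection is also what should yield infinitely many admissible conformal classes, since it is an open condition.
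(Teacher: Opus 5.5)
\textbf{There is a genuine gap in the existence argument.} Your setup matches the paper's: a background metric with nowhere vanishing $\bach$ from \cite{cdmbach}, the substitution $w=e^{-3u}$, and uniform ellipticity. Your injectivity claim is also right, since $\abs{T(\xi)}^2=18\abs{\bach}^2\abs{\xi}^2+8\abs{\bach\,\xi}^2$. The gap sits exactly at the obstacle you flag.

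Coercivity of the principal part bounds $\norm{dw_n}_{L^2}$ only once $\norm{w_n}_{L^2}$ is controlled. Your proposed remedy does not supply this control:
\begin{itemize}
\item Choosing $\ol g$ with $\abs{\ol\nabla\bach}$ small relative to $\abs{\bach}$ is vacuous as stated. A constant rescaling of $\ol g$ achieves it and changes nothing, because $\mathcal{B}$ is scale invariant and the rescaling only shifts $u$.
\item A small $\ol\nabla\bach$ makes the zeroth-order coefficient small. It does not make the pointwise form definite in $(w,dw)$; the form vanishes on $(w,0)$ wherever $\ol\nabla\bach=0$.
\end{itemize}

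The missing idea is a dichotomy on the principal eigenvalue $\lambda_1$ of the self-adjoint, uniformly elliptic operator $L$ with $\int_M wLw\,dV=Q(w):=\int_M\abs{\tfrac13A(dw)+w\,\ol\nabla\bach}^2dV$. Since $Q\geq0$ on all of $H^1$, we have $\lambda_1\geq0$.
\begin{itemize}
\item If $\lambda_1=0$, the positive eigenfunction $\phi_1$ has $Q(\phi_1)=0$. Hence $F(\phi_1)=0$ and $\phi_1$ is already a minimizer.
\item If $\lambda_1>0$, then $Q(w)\geq\lambda_1\norm{w}_{L^2}^2$, which gives the $H^1$ bound.
\end{itemize}
No condition on $\ol g$ is needed beyond $\bach_{\ol g}\neq0$ everywhere. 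That condition is open, and this is what yields infinitely many conformal classes.

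\textbf{The second gap is the lower bound on the weak limit $w$.} You cannot run Moser iteration on the Euler--Lagrange equation before you have that equation. You do not have it yet: $w$ is only in $H^1$, positive a.e., with $\int w^{-4/3}\leq1$. So $w+t\phi$ may leave the admissible set for sign-changing $\phi$, and $\int w^{-7/3}\abs{\phi}$ may diverge.

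The fix is to vary only with $\phi\geq0$. Then $\int(w+t\phi)^{-4/3}\leq\int w^{-4/3}$, and together with $Q\geq0$ this gives $F(w+t\phi)-F(w)\leq\bigl(\int w^{-4/3}\bigr)^{3/2}\bigl(Q(w+t\phi)-Q(w)\bigr)$. Minimality then forces $\int_M wL\phi\,dV\geq0$, i.e.\ $Lw\geq0$ weakly. With $\lambda_1>0$, the weak Harnack inequality gives $\operatorname*{ess\,inf}_Mw>0$. After that the equation holds weakly, and bootstrapping gives smoothness.

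\textbf{Your uniqueness idea can be made rigorous, by a route different from the paper's.} Normalize two minimizers by $\int w_i^{-4/3}=1$, and set $w_t=(1-t)w_1+tw_2$.
\begin{itemize}
\item By convexity of the nonnegative quadratic form, $Q(w_t)\leq\min F$.
\item By strict convexity of $x\mapsto x^{-4/3}$, $\int w_t^{-4/3}<1$ unless $w_1=w_2$.
\end{itemize}
So $F(w_t)<\min F$ when $\min F>0$, a contradiction. When $\min F=0$, use simplicity of $\lambda_1$. The paper instead compares normalized solutions of $L\psi=\psi^{-7/3}$ via the maximum principle. That is longer, but it also shows that every positive critical point is the minimizer.
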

	Our approach is based on the strategy adopted in \cite{cmmp} and on the aforementioned existence result proven in \cite{cdmbach}. More precisely, the Euler--Lagrange equation for critical points of $\mathcal{B}$ with respect to conformal variations can be viewed as a second-order elliptic PDE in the conformal factor $v$
	\[
	-\diver_g\pa{G\nabla v}+cv=\dfrac{\lambda(v)}{v^{\frac{7}{3}}},
	\]
	(see \eqref{pdetosolve} for a precise description of the PDE):
	if we restrict our attention to a conformal class of metrics $[g]$ whose Bach tensor nowhere vanishes \cite{cdmbach}, we can show that the operator appearing on the left-hand side of the PDE becomes uniformly elliptic. Following \cite{cmmp}, we then construct a function $v$, which attains the infimum of $\mathcal{B}$, inside a suitable subset of $H^1(M)$ and, making use of a version of the maximum principle, we are able to show that $v$ is a weak solution of the PDE, avoiding the possible issues caused by the singularity given by the term $v^{-7/3}$. Then, a standard bootstrapping argument shows that the solution $v$ is smooth and positive, which means that it can be regarded as a classical solution of the PDE: hence, $v$ is a true conformal factor and the metric $\tilde{g}=v^{-2/3}g$ is a smooth minimizer for $\mathcal{B}$. Furthermore, $\abs{\bach_g}_g>0$ on $M$ is an open condition, which means that, if $\tilde{g}$ is a small deformation of $g$ inside the space of $C^4$ metrics, then $\abs{\bach_{\tilde{g}}}_{\tilde{g}}>0$ on $M$, i.e. in every such conformal class there exists a minimizer for $\mathcal{B}$ (this also comes from the ``genericity property" that the metrics constructed in \cite{Aubin, cdmbach} satisfy). We highlight the fact that, in general, finding minimizers with respect to conformal variations for such a functional is a degenerate elliptic problem and, therefore, the method we propose cannot be generally applied. 
	
	Next, we provide some results concerning the absolute minima of the functional $\mathcal{B}$, which are Bach-parallel metrics. In particular, we focus on Riemannian metrics $g$ such that $\nabla\bach_g\equiv 0$ but $\bach_g\not\equiv 0$, in order to derive some classification results and to provide non-trivial examples of Bach-parallel metrics. The key tool we use is a version of the well-known de Rham's splitting Theorem \cite{boubel, eisen}: if $g$ is complete and $\bach_g$ is parallel, then either $\bach_g$ is identically zero or the manifold is (locally) isometric to a Riemannian product (see Theorem \ref{splittingbach}). This essentially reduces our analysis to the study of Bach-parallel, non-Bach-flat metrics on products of surfaces or on manifolds of the form $\mathbb{S}^1\times N^3$, where $N^3$ is a three-dimensional manifold. In the first case, using local computations on the Bach tensor of a product metric on $N_1\times N_2$, where $N_i$ is a surface, and some standard ODE analysis, we obtain a full classification of such metrics:
	\begin{theorem} \label{bachpar22}
		Let $(M,g)$ be a simply connected, complete, 
		four-dimensional Riemannian manifold with 
		$\nabla \bach\equiv 0$ and $\bach\not\equiv 0$; assume that
		$(M,g)$ is isometric to a Riemannian product $(N_1(K_1)\times N_2(K_2), g_1\oplus g_2)$, where $N_i(K_i)$ is a Riemannian surface with Gaussian curvature $K_i$. Then, either $K_i$ is constant for $i=1,2$, with $K_1\neq - K_2$, or, for some $i$, $N_i=\RR^2$ and
		\begin{equation} \label{rotsymmmetrics}
			g_i=dr^2+(h(r))^2d\theta^2,
		\end{equation}
		where 
		\[
		h(r)=\dfrac{2}{\sqrt{-a}}\tanh\pa{\dfrac{r\sqrt{-a}}{2}}\operatorname{sech}^2\pa{\dfrac{r\sqrt{-a}}{2}}, \quad a<0,
		\]
		and $(N_i,g_i)$ has a hyperbolic, cusp-like end. 
	\end{theorem}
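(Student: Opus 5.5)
The plan is to reduce the condition $\nabla\bach\equiv 0$ on the product $N_1\times N_2$ to a system of equations on each factor, and then to solve that system on each surface.

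\textbf{Step 1: the Bach tensor of a product of surfaces.} Write $g=g_1\oplus g_2$ and let $K_i$ be the Gaussian curvature of $g_i$. Then $\ricc=K_1g_1\oplus K_2g_2$ and the scalar curvature is $R=2(K_1+K_2)$. The Weyl tensor of such a product is determined, up to a universal algebraic tensor, by the single function $K_1+K_2$. In particular $\weyl\equiv 0$ exactly when $K_1+K_2\equiv 0$. I would insert this into $\bach_{ij}=\nabla^k\nabla^l\weyl_{ikjl}+\tfrac12 R^{kl}\weyl_{ikjl}$.

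Because each $K_i$ depends only on its own factor, all mixed components $\bach(X_1,X_2)$ vanish. The diagonal blocks should take the form
\[
\bach\restrict{N_1}=\alpha\Big(\hess_1K_1-\tfrac12\Delta_1K_1\,g_1\Big)+\Phi\,g_1,\qquad \bach\restrict{N_2}=\alpha\Big(\hess_2K_2-\tfrac12\Delta_2K_2\,g_2\Big)-\Phi\,g_2 .
\]
Here $\alpha$ is a universal constant, and $\Phi$ is a combination of $\Delta_1K_1$, $\Delta_2K_2$ and $(K_1+K_2)(K_1-K_2)$. The sign flip between the blocks is forced by tracelessness. The first task is to determine $\alpha$ and the exact form of $\Phi$ by a careful local computation.

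\textbf{Step 2: reduction to conformal gradient solitons.} The Levi-Civita connection of $g$ preserves the splitting, so $\nabla\bach=0$ holds if and only if each block is parallel. On each block, the trace-free part and the trace part must then be parallel separately.

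First consider the trace part. Since $\Phi$ is a function on $N_1\times N_2$, this forces $\Phi$ to be constant.

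Next consider the trace-free part $T_1=\hess_1K_1-\tfrac12\Delta_1K_1\,g_1$, which is a parallel trace-free symmetric $2$-tensor on the surface $N_1$. If $T_1\neq 0$, its eigenspaces give parallel line fields. Then $N_1$ is flat, so $K_1\equiv 0$ and $T_1=0$, a contradiction. Hence $T_1\equiv 0$ and $T_2\equiv 0$. This says that $\nabla K_i$ is a conformal gradient field, i.e. $(N_i,g_i,K_i)$ is a conformal gradient soliton.

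Now suppose all $K_i$ are constant. Then $\bach$ is a nonzero multiple of $(K_1+K_2)(K_1-K_2)$ times $(g_1\ominus g_2)$, up to the exact coefficients found in Step 1. Imposing $\bach\not\equiv 0$ excludes $K_1=-K_2$, which is the locally conformally flat case. I need to check in Step 1 that $K_1=K_2$ also gives $\bach=0$; if so, that case must be folded into the statement's condition or the statement adjusted.

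\textbf{Step 3: the non-constant case and the ODE.} Suppose $K_1$ is non-constant. Complete surfaces carrying a non-trivial conformal gradient field are classified: they are rotationally symmetric, with $g_1=dr^2+h(r)^2d\theta^2$ on $\RR^2$ or on a cylinder, where $K_1=k(r)$ and $h=c\,k'$. Simple connectivity of $M$ rules out the cylinder.

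Since $\Phi$ is constant and $\Phi$ splits into a function of $N_1$ plus a function of $N_2$, we get an equation of the form
\[
\Delta_1K_1+\beta K_1^2=\text{const},
\]
with a compatible constant condition on $N_2$; in fact $K_2$ should turn out to be constant. Using $K_1=-h''/h$ and $\Delta_1K_1=k''+\tfrac{h'}{h}k'$, together with $h\propto k'$, this gives a third-order autonomous ODE for $h$. I would integrate it once to an energy-type first integral, impose smoothness at the pole ($h(0)=0$, $h'(0)=1$) and completeness, and then solve explicitly, which should give the $\tanh\cdot\operatorname{sech}^2$ profile with $a<0$. The exponential decay of $h$ shows the end is a hyperbolic cusp, since $K_1$ tends to a negative constant.

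\textbf{Main obstacle.} I expect two hard points. The first is getting the exact coefficients in Step 1, because the ODE in Step 3 depends delicately on them. The second is the ODE analysis: ruling out all other integration constants by using completeness and regularity at the pole.
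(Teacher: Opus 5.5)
Steps 1 and 2 are correct. The paper's computation gives $\alpha=-\tfrac13$ and $\Phi=\tfrac16\pa{\Delta_1K_1+K_1^2-\Delta_2K_2-K_2^2}$. So constant curvatures give $\bach=0$ exactly when $K_1=\pm K_2$; this is consistent with the statement, which only records the weaker conclusion $K_1\neq -K_2$. Your argument that a non-zero parallel trace-free tensor on a surface forces flatness is a clean, self-contained substitute for the paper's appeal to the splitting theorem. Constancy of $\Phi$ adds no information, though: $\mathring{\hess}(K_1)=0$ alone gives $\nabla(\Delta_1K_1+K_1^2)=0$, and your first integral is the paper's $(K')^2=P(K)=-\tfrac13K^3+AK+B$.

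\textbf{The gap is in Step 3.} The classification you quote (``rotationally symmetric on $\RR^2$ or on a cylinder'', followed by regularity at a single pole) is not Tashiro's classification. It omits two cases, both simply connected, so simple connectivity of $M$ does not exclude them:
\begin{itemize}
\item[(i)] $K_1$ has two critical points and $N_1$ is a compact sphere;
\item[(ii)] $K_1$ has no critical points and its level curves are lines, so $N_1\cong\RR\times\RR$ with $g_1=d\rho^2+(K_1'(\rho))^2dt^2$, $\rho,t\in\RR$. This metric has no pole at all (compare the horocyclic foliation of $\mathbb{H}^2$).
\end{itemize}
Your ODE analysis imposes $h(0)=0$, $h'(0)=1$, so it never touches these cases.

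\textbf{How the paper closes them.} Case (i) is excluded by the Bourguignon--Ezin identity $\int\langle\nabla K,\nabla S\rangle\,dV=0$ for the conformal field $\nabla K$; with $S=2K$ this forces $\nabla K\equiv 0$. Case (ii) is excluded by studying $(K')^2=P(K)$ on the whole line:
\begin{itemize}
\item $K$ is monotone;
\item $K$ cannot tend to $+\infty$, since $P<0$ there;
\item $K$ cannot tend to $-\infty$, since $\int dK/\abs{K}^{3/2}<\infty$ would mean blow-up at finite distance;
\item hence both limits at $\pm\infty$ are distinct roots of $P$ reached only at infinite distance, so each has multiplicity at least two, which a cubic cannot have.
\end{itemize}
Within your own framework you could also kill (i) by imposing regularity at both poles. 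The condition $h'(L)=-1$ forces $P'(c)/P'(b)=-1$ for the two simple roots $b<c$ bounding $K$. This is impossible, because the ratio equals $-(c-a_0)/(b-a_0)$, where $a_0$ is the third root. Only after (i) and (ii) are excluded does your one-pole computation finish the proof.

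\textbf{A false aside.} Your remark that ``$K_2$ should turn out to be constant'' is wrong. Take both factors from the family with parameters $a_1\neq a_2$. Then $\Delta_iK_i+K_i^2=a_i^2$, so $\bach\restrict{N_1}=\Phi g_1$ and $\bach\restrict{N_2}=-\Phi g_2$ with $\Phi=\tfrac16(a_1^2-a_2^2)\neq 0$. This is a parallel, non-zero Bach tensor with both curvatures non-constant, which is why the statement only says ``for some $i$''. The aside does not affect your logic, but you should not try to prove it.
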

	One interesting consequence of Theorem \ref{bachpar22} is the fact that there exists a ``non-trivial" example of a Bach-parallel metric on a product of surfaces: more precisely, we show that, when the Gaussian curvature $K_i$ of $(N_i,g_i)$ is not constant, this surface becomes a \emph{conformal gradient soliton} with potential function $K_i$, which means that
	\[
	\hess K_i=\dfrac{\Delta K_i}{2}g_i.
	\]
	This equation, with a general, non-constant potential $f$, has been extensively studied over the years and complete Riemannian manifolds which satisfy this condition are fully classified
	(see e.g. \cite{catmantmazz,osgood,tashiro}): in our case, since $f=K_i$, we obtain that $K_i$ must be a solution of a static KdV equation and, exploiting results in \cite{ishiharatashiro, tashiro}, we are able to show that $K_i$ has exactly one critical point and that $g_i$ is rotationally symmetric (the explicit expression of $h$ is derived by solving another ODE, once we explicitly obtain $K_i$).
	
	Theorem \ref{bachpar22} has two other interesting consequences: 
	first, if we consider the complete Riemannian four-manifold
	$(M=N_1\times N_2, g=g_1\oplus g_2)$, where $g_i$ is a metric of the one-parameter family \eqref{rotsymmmetrics} with
	parameter $a_i$, there exists a choice of the $a_i$'s such that
	$g$ is Bach-flat (namely, $a_1=a_2$). In this case, it can 
	be shown that $g$ is neither locally conformally flat nor 
	conformally Einstein, which means that, in particular, $g$ is
	strictly Bach-flat: this new example is explicit, non-homogeneous, rotationally symmetric with sign-changing scalar curvature.  
	\begin{theorem}\label{thm example}
		Let $(M,g)$ be a complete Riemannian four-manifold and assume that it is isometric to the Riemannian product of two surfaces $(N_1(K_1)\times N_2(K_2),g_1\oplus g_2)$, such that
		\[g_i=dr_i^2+(h_i(r_i))^2d\theta_i^2,\quad i=1,2\]
		where 
		\[ h_i(r_i)=\dfrac{2}{\sqrt{-a}}\tanh\pa{\dfrac{r_i\sqrt{-a}}{2}}\operatorname{sech}^2\pa{\dfrac{r_i\sqrt{-a}}{2}}, \quad a<0, \quad i=1,2.\]
		Then, $(M,g)$ is strictly Bach-flat.
	\end{theorem}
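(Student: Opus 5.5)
We need to prove three things for $g=g_1\oplus g_2$ with the same parameter $a$ in both factors: $\bach_g=0$, $g$ is not locally conformally flat, and $g$ is not (locally) conformally Einstein.

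\textbf{Step 1: the Gaussian curvature of each factor.} I will compute $K_i$ from $h_i$ via $K_i=-h_i''/h_i$. Put $s=r\sqrt{-a}/2$ and $h=\frac{2}{\sqrt{-a}}\tanh s\,\mathrm{sech}^2 s$. A direct differentiation should give
\[
K(r)=-a\pa{\tfrac{3}{2}\mathrm{sech}^2 s-1}\cdot c
\]
for a suitable normalization $c$, i.e. an affine function of $\mathrm{sech}^2 s$. This is exactly the static KdV profile, and it satisfies $K\to a<0$ at infinity, which is the cusp-like hyperbolic end. I will also check the identity $\hess K=\frac{\Delta K}{2}g_i$, i.e. $K''=K'h'/h$, which holds because $K'$ is proportional to $h$.

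\textbf{Step 2: the Bach tensor of a product of surfaces.} For $g_1\oplus g_2$ the Weyl tensor is determined by the single function $K_1+K_2$: indeed $W^\pm$ have the single eigenvalue structure $\frac{K_1+K_2}{6}\,\mathrm{diag}(2,-1,-1)$. Using the standard formula in dimension four,
\[
\bach_{ij}=\nabla^k\nabla^l W_{ikjl}+\tfrac12 R^{kl}W_{ikjl},
\]
the Bach tensor takes the block form $\bach=\beta\,(g_1-g_2)$ plus trace-free Hessian terms. By the conformal soliton identity $\hess K_i=\frac{\Delta K_i}{2}g_i$, those Hessian terms are pure trace on each factor. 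Carrying this out, I expect
\[
\bach=\tfrac{1}{6}\Big(\Delta_1K_1-\Delta_2K_2+(K_1+K_2)(K_1-K_2)\cdot\mathrm{const}\Big)(g_1-g_2).
\]
This should be the computation underlying Theorem \ref{bachpar22}. Bach-flatness therefore reduces to the scalar identity $\Delta_1K_1+\alpha K_1^2=\Delta_2K_2+\alpha K_2^2$ (plus the cross term). Each side should equal the same constant $C(a)$, because the KdV equation satisfied by $K_i$ takes the form $\Delta K_i=-\alpha K_i^2+\gamma K_i+C$ with constants depending only on $a$. The linear terms have to be handled carefully: the cross term $K_1K_2$ cancels in $(K_1+K_2)(K_1-K_2)$, so the identity holds when $a_1=a_2$. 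The main obstacle is getting the exact constants in this Bach formula right so that this cancellation is verified rather than assumed. I would double-check the formula on $\SS^2\times\SS^2$ and $\SS^2\times\mathbb{H}^2$ (Bach-flat, with $K_1=-K_2$ LCF), and on $\SS^2(1)\times\SS^2(2)$ (not Bach-flat).

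\textbf{Step 3: not LCF and not conformally Einstein.} Since $|W|^2$ is a positive multiple of $(K_1+K_2)^2$, and $K_1+K_2$ is not identically zero (at the two origins it equals $2K(0)\neq 0$), $g$ is not locally conformally flat. For conformally Einstein I would use the Kozameh–Newman–Tod / Gover–Nurowski obstruction. On the open set where $W\neq0$, a conformal factor $e^{2u}$ making $g$ Einstein must satisfy $\diver W+W(\nabla u,\cdot,\cdot,\cdot)=0$, that is $C_{ijk}=W_{ijkl}\nabla^l u$ up to a constant. On a product with $W$ as above, $W_{ijkl}$ is determined by $K_1+K_2$, and this equation determines $\nabla u$ explicitly. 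I would compute the resulting $d u$ in terms of $\nabla K_1$ and $\nabla K_2$, obtaining something like $du=-d\log|K_1+K_2|$ componentwise with mixed coefficients. I would then check either that this $1$-form is not closed, or that substituting $u$ into $\mathring{\ricc}_{e^{2u}g}=0$ fails: the trace-free Ricci $\frac{K_1-K_2}{2}(g_1-g_2)$ must be cancelled by $\mathring{\hess}u-du\otimes du$, and evaluating this at a point where $K_1\neq K_2$ and $\nabla K_i$ is generic gives a contradiction. This step is a concrete but possibly messy computation, and I would do it in the orthonormal frame $\{\partial_{r_1},h_1^{-1}\partial_{\theta_1},\partial_{r_2},h_2^{-1}\partial_{\theta_2}\}$.
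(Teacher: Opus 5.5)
Steps 1--2 follow the paper's route: the block formula \eqref{bach22comp}, the soliton equation removing the trace-free Hessian parts, and the reduction to $\Delta_1K_1+K_1^2=\Delta_2K_2+K_2^2$. Two corrections are needed there.
\begin{itemize}
\item The profile is $K=a-3a\operatorname{sech}^2 s$. Your guessed form cannot match it for any choice of $c$.
\item There is no linear term $\gamma K_i$. Commuting third derivatives in the soliton equation gives $\nabla(\Delta K+K^2)=0$, and the constant is $a^2$ for both factors: at the origin, $\Delta K=2K''(0)=-3a^2$ and $K^2=4a^2$.
\end{itemize}
This identity is what closes Step 2. Your constant-curvature sanity checks cannot see the relative weight of $\Delta_iK_i$ against $K_i^2$, so they do not settle it.

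The genuine gap is in Step 3. Your metric \emph{is} conformally Einstein away from $Z=\{K_1+K_2=0\}$. Using $\hess_iK_i=\frac12(a^2-K_i^2)g_i$, the function $\varphi=K_1+K_2$ satisfies $\varphi\ricc+2\hess\varphi=(K_1K_2+a^2)g$. Hence $(K_1+K_2)^{-2}g$ is Einstein on $M\setminus Z$. The $1$-form you expect, $du=-d\log\abs{K_1+K_2}$, is therefore exact, and $\mathring{\ricc}_{e^{2u}g}=0$ holds at every generic point. Both checks you propose would confirm conformal Einsteinness rather than refute it, and your opening claim that $g$ is ``not (locally) conformally Einstein'' is false in that sense.

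The obstruction is global, and you need two ingredients.
\begin{itemize}
\item[(i)] \emph{The conformal factor is forced.} The paper proves this by separation of variables. The mixed Hessian components vanish, so $\varphi=f_1+f_2$. Then $(K_1)_a(f_2)_i=(K_2)_i(f_1)_a$ forces $\varphi=C(K_1+K_2)+D$, and substituting back gives $D=0$. In your own framework, $C_{ijk}=W_{ijkl}\nabla^lu$ determines $\nabla u$ uniquely on $\{W\neq0\}=M\setminus Z$. This works because $W$ is $(K_1+K_2)$ times a fixed algebraic tensor whose contraction with vectors is injective. So $u=-\log\abs{K_1+K_2}+\text{const}$ on each component of $M\setminus Z$.
\item[(ii)] \emph{$Z\neq\varnothing$.} We have $K_1+K_2=-4a>0$ at the pair of origins, and $K_1+K_2\to 2a<0$ as $r_1,r_2\to\infty$, so it vanishes somewhere by connectedness.
\end{itemize}
Any global conformal factor would then blow up along $Z$ (or, written as $\varphi$, vanish there). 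This contradicts smoothness and positivity, and that is the actual reason $g$ is not conformally Einstein.
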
 
	If, on one hand, it is immediate to verify that $g$ is not locally conformally flat, on the other hand, in order to show that $[g]$ does not admit an Einstein representative we argue by contradiction, showing that the only possible conformal factor is well-defined on $M\setminus Z$, where $Z$ is the zero set of the sum of the Gaussian curvatures: this provides a conformally Einstein metric on $M\setminus Z$ (that is unique, up to constant rescaling \cite{derdzinski}), which cannot be smoothly extended to $Z$. 
	
	The other implication derived from Theorem \ref{bachpar22}
	regards the so-called \emph{HCMU} metrics (``Hessian of the curvature of the metric is umbilical", see e.g. \cite{chenwuxu, chenextremal, chenobstruction}), i.e. Riemannian metrics on surfaces whose Gaussian curvature has vanishing trace-free Hessian (which is equivalent to say that the surface admits a conformal gradient soliton structure with potential $K$): these metrics are the critical points of the well-known 
	\emph{Calabi functional} \cite{calabi1, calabi2}
	\[
	\mathfrak{E}(g):=\int_M K_g^2 dV_g
	\]
	with respect to (compactly supported) full variations that
	fix the (relative) volume and, in the compact case, 
	are best known as \emph{extremal K\"{a}hler metrics}
	\cite{szeke2} (see Subsection \ref{secction extHerm} for a more detailed dissertation). In the closed case, the HCMU condition 
	is equivalent to the equation 
	\begin{equation} \label{ELcalabi}
		\Delta K+K^2=C,
	\end{equation}
	which is the Euler--Lagrange equation of $\mathfrak{E}$ with respect to conformal variations that preserve the volume: 
	however, if the surface is only complete, \eqref{ELcalabi} does not, in general, imply the vanishing of $\mathring{\hess}(K):=\hess(K)-(\Delta K)/2\cdot g$. Metrics which satisfy
	\eqref{ELcalabi} are called \emph{extremal Hermitian}:
	in much of the literature, these metrics are characterized and classified under finite energy and finite area assumptions
	(see e.g. \cite{chenwuexplicit, chenwuxu, chenextremal, wangzhu}) and they can be seen as generalization
	of extremal K\"{a}hler metrics on punctured Riemann surfaces. 
	Theorem \ref{bachpar22} therefore provides a classification
	result for HCMU metrics without any additional hypothesis and
	for extremal Hermitian metrics, under an integrability hypothesis on $\nabla K$. 
	\begin{cor} \label{corextKahler}
		Let $(M,g)$ be a complete Riemannian surface. Then, 
		\begin{itemize}
			\item if $g$ is a HCMU metric, then either $K$ is
			constant or
			\[
			g=dr^2+(h(r))^2d\theta^2,
			\]
			where $h$ belongs to the family given in the statement of Theorem \ref{bachpar22};
			\item if $g$ is an extremal Hermitian metric, we have the same conclusion, provided that $\nabla K\in L^2(M)$.
		\end{itemize}
	\end{cor}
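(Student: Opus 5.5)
The plan is to reduce both items to Theorem \ref{bachpar22}, using the product trick that already produced the conformal gradient soliton structure there. Let $(M,g)$ be a complete surface and let $\tilde M$ be its universal cover with the pulled-back metric, which is still complete. Both the HCMU condition and \eqref{ELcalabi} are local, so they lift to $\tilde M$, and any rotationally symmetric conclusion upstairs will descend. Since the family in Theorem \ref{bachpar22} lives on $\RR^2$, I expect $M=\tilde M$ in the nonconstant case: a nonconstant $K$ with a single critical point cannot be invariant under a nontrivial deck group acting freely. On the product $P=\tilde M\times \RR^2$, with $\RR^2$ flat, the local computation of the Bach tensor of $g_1\oplus g_2$ (as in the proof of Theorem \ref{bachpar22}) expresses $\bach$ through $K_1$, $K_2$ and the trace-free Hessians $\mathring{\hess}K_i$.

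\textbf{First item.} If $g$ is HCMU, then $\mathring{\hess}K=0$. The remaining trace terms should assemble, modulo that relation, into quantities such as $\Delta K+K^2$ that are forced to be constant. Indeed, taking the divergence of $\hess K=\frac{\Delta K}{2}g$ on a surface gives $\nabla(\Delta K)=-2K\nabla K$, hence $\Delta K+K^2=C$. I would then verify that $\nabla\bach\equiv 0$ on $P$. Alternatively, I would bypass the product entirely and rerun the ODE step of Theorem \ref{bachpar22}, which only used the conformal gradient soliton equation with potential $K$. In either route, if $K$ is nonconstant, the classification of complete conformal gradient solitons \cite{tashiro,ishiharatashiro} gives $g=dr^2+h(r)^2d\theta^2$ with $K=K(r)$. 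The ODE $K''+K^2=C$ (the static KdV equation) together with smooth closing at the single critical point, $h(0)=0$ and $h'(0)=1$, then pins down $h$ within the stated family. If instead $\bach\equiv 0$, the case $K_1\ne -K_2=0$ of the theorem, i.e. $K$ constant, is what remains.

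\textbf{Second item.} Here only $\Delta K+K^2=C$ is assumed, and the task is to recover $\mathring{\hess}K=0$ from $\nabla K\in L^2$. The plan is a Bochner/integration argument with cutoffs. On a surface,
\[
\tfrac12\Delta\abs{\nabla K}^2=\abs{\hess K}^2+\langle\nabla K,\nabla\Delta K\rangle+K\abs{\nabla K}^2 ,
\]
and substituting $\nabla\Delta K=-2K\nabla K$ gives
\[
\tfrac12\Delta\abs{\nabla K}^2=\abs{\mathring{\hess}K}^2+\tfrac12(\Delta K)^2-K\abs{\nabla K}^2 .
\]
This identity is not yet sign-definite. The cleaner route is the two-dimensional identity
\[
\diver\pa{\mathring{\hess}K}=\tfrac12\nabla\Delta K+K\nabla K .
\]
Under \eqref{ELcalabi} the right-hand side vanishes, so $\mathring{\hess}K$ is divergence-free. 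Pairing it with $\nabla K$ against a cutoff $\varphi^2$ and integrating by parts gives
\[
\int\varphi^2\abs{\mathring{\hess}K}^2=-\int 2\varphi\,\mathring{\hess}K(\nabla\varphi,\nabla K).
\]
By Cauchy--Schwarz this yields
\[
\int\varphi^2\abs{\mathring{\hess}K}^2\le 4\int\abs{\nabla\varphi}^2\abs{\nabla K}^2 .
\]
Choosing $\varphi$ with $\abs{\nabla\varphi}\le 1/R$ on $B_{2R}$, the right-hand side is at most $\frac{4}{R^2}\norm{\nabla K}^2_{L^2}\to 0$ as $R\to\infty$. Hence $\mathring{\hess}K\equiv 0$ and the first item applies.

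\textbf{Main obstacle.} I expect the main difficulty to be the first item: checking that the HCMU condition alone, without a priori constancy of the relevant trace combinations, yields the same ODE solution family as Theorem \ref{bachpar22}. Concretely, this means identifying the integration constant $C$ and the parameter $a$, and excluding the other complete solutions of $K''+K^2=C$, namely the periodic ones or those on cylinders, using completeness and the uniqueness of the critical point.
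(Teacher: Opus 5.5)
Your proposal is correct and follows the paper's proof. For the first item you rerun the Tashiro/ODE analysis from the proof of Theorem \ref{bachpar22}, which uses only $\mathring{\hess}K=0$ and completeness. For the second item you use the paper's own cutoff argument: pair the divergence-free tensor $\mathring{\hess}K$ (since $\diver\mathring{\hess}K=\frac12\nabla(\Delta K+K^2)$) with $\chi^2\nabla K$, then apply Cauchy--Schwarz.

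Drop the product detour, though. On $\tilde M\times\RR^2$ one gets $\bach=\frac{C}{6}(g_1-g_2)$, so for $C=0$ the \emph{statement} of Theorem \ref{bachpar22} does not apply, and your remark about the case $\bach\equiv0$ does not prove that $K$ is constant. That is why you must appeal to its proof rather than its statement. Note also that $\Delta K=2K''$ in these coordinates, so the radial equation is $2K''+K^2=C$.
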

	
	On the other hand, when $(M,g)$ splits as 
	a product of the form $\mathbb{S}^1\times N$, where $(N,g_N)$ is a Riemannian three-manifold, we cannot establish a full classification of Bach-parallel metrics: indeed, the local components of the Bach tensor of such metric are much more involved and rather difficult to deal with in a satisfying manner. However, it is possible to observe that, when $M$ splits this way, the metric $g$ is Bach-parallel if and only if $g_N$ is critical for a suitable quadratic curvature functional: namely, we have the following
	\begin{theorem} \label{criticalequivalenceintro}
		Let $(N,g_N)$ be a complete Riemannian three-manifold. Then, the product metric $g=g_{\SS^1}\oplus g_N$ defined on $M=\mathbb{S}^1\times N$ is Bach-parallel if and only if $g_N$ is critical for the Riemannian functional 
		\[
		\mathfrak{F}_{-1/3}(h)=\int_N\abs{{}^N\ricc_h}_h^2 \, dV_h -\dfrac{1}{3}\int_N{}^NS_h^2dV_h. 
		\]
	\end{theorem}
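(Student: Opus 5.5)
We plan to compute the Bach tensor of the product $g=dt^2+g_N$ on $\SS^1\times N$ in terms of the curvature of $g_N$, and compare the parallel condition with the Euler--Lagrange equation of $\mathfrak{F}_{-1/3}$.

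\emph{Step 1: curvature of the product.} We recall the Euler--Lagrange equation of $\mathfrak{F}_t(h)=\int_N\abs{\ricc}^2+t\int_N S^2$ in dimension three (see \cite{besse}). Writing $R_{ij}$, $S$ for the curvature of $g_N$, its gradient is
\[
-\Delta R_{ij}+\frac{1+2t}{2}\,\Delta S\,(g_N)_{ij}-(1+2t)\nabla_i\nabla_j S+E_{ij},
\]
where $E_{ij}$ collects the quadratic curvature terms. In dimension three we express $\riem$ through $\ricc$, so $E_{ij}$ becomes a combination of $R_{ik}R_{kj}$, $SR_{ij}$, $\abs{\ricc}^2 (g_N)_{ij}$ and $S^2 (g_N)_{ij}$. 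We set $t=-1/3$.

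On $M$ the $t$-direction is flat and parallel. Hence $\riem_g$ is $\riem_N$ extended by zero, $\ricc_g=0\oplus\ricc_N$, and $S_g=S$. Since $\weyl_N=0$ in dimension three, $\riem_N$ is fixed by the Schouten tensor of $N$, so $\weyl_g$ is explicit in $\ricc_N$.

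\emph{Step 2: the Bach tensor.} We use $\bach_{ab}=\frac{1}{2}\nabla^c\nabla^d \weyl_{acbd}+\frac{1}{4}\ricc^{cd}\weyl_{acbd}$ in dimension four, or equivalently $\bach=\frac12\pa{\diver\mathcal C+\weyl\ast\ricc}$ with the Cotton tensor $\mathcal C$. Nothing depends on $t$, so all derivatives are $N$-derivatives. For the mixed components we expect $\bach_{tj}=0$. For the $tt$ and $ij$ components we expect formulas of the type
\[
\bach_{tt}=\alpha\,\Delta S+\beta\abs{\ricc}^2+\gamma S^2,\qquad
\bach_{ij}=-\tfrac12\Delta R_{ij}+\dots,
\]
with explicit constants. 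We will then check that the trace-free part of the $N$-block of $\bach$ is a constant multiple of the trace-free part of $\nabla\mathfrak F_{-1/3}$. This matching of coefficients, and the reason the specific value $t=-1/3$ appears, is where we expect the main work. The constant $-1/3$ should come out of $\mathrm{tr}_g\bach=0$ combined with the $\SS^1$ factor.

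\emph{Step 3: the equivalence.} Suppose $\nabla\bach=0$. Since $\partial_t$ is parallel, $\bach_{tt}$ is constant, and $\bach|_N$ is a parallel tensor on $N$. Conversely, if $g_N$ is critical, we show that $\bach$ is parallel; this relies on the structure of the critical-point equation.

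Part of $\nabla\mathfrak{F}_t$ can be rewritten through its trace. Taking the trace of the Euler--Lagrange equation gives $\Delta S$ in terms of quadratic invariants; for $t=-1/3$ the coefficient of $\Delta S$ is $-1-\frac{3}{2}(1+2t)+\dots$, to be checked. Our candidate identification is that critical points for $t=-1/3$ are exactly those for which $\bach_g$ equals a parallel tensor.

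The hardest step is the precise claim in each direction. We expect $\bach_g$ to equal a multiple of $\nabla\mathfrak F_{-1/3}(g_N)$ plus a term $c\,(3\,dt^2-g_N)$-like in $\bach_{tt}$. If that term is not automatically parallel, we first try to show that criticality forces $\bach_{tt}$ to be constant, via the traced equation. If that fails, we use the trace-free condition $\bach_{tt}=-\mathrm{tr}_N\bach$ together with the contracted second Bianchi identity. The remaining parallel $N$-part would then split $N$ further by Theorem \ref{splittingbach}, reducing to the local-product case. We expect the vanishing of $\nabla\bach$ to reduce exactly to the vanishing of the trace-free Euler--Lagrange tensor together with constancy of a scalar quantity, which is implied by it.
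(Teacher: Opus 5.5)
Your plan has the same architecture as the paper's proof: compute $\bach$ of $dt^2+g_N$, compare its $N$-block with the Euler--Lagrange tensor of $\mathfrak F_{-1/3}$, then use parallelism and Theorem \ref{splittingbach}. However, the identity everything rests on is never established, and the formulas you do write are off.

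From \eqref{bach13comp} and the known three-dimensional gradient, the paper has the exact, trace-included relation
\[
\bach_g=-\tfrac12\,\nabla\mathfrak F_{-1/3}(g_N)+\tfrac12\pa{\operatorname{tr}_{g_N}\nabla\mathfrak F_{-1/3}(g_N)}\,dt^2 ,
\]
that is, $\nabla\mathfrak F_{-1/3}(g_N)=-2\,\bach_g\restrict{N}$ and $B_{tj}=0$.

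Your gradient has the wrong sign on the Hessian term. The correct expression is $-\Delta\ricc+(1+2t)\hess S-\frac{1+4t}{2}(\Delta S)\,g_N+\dots$. The third-order part of yours is not divergence-free, which the gradient of any diffeomorphism-invariant functional must be.

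With the correct relation there is nothing extra to control. If $\nabla\mathfrak F_{-1/3}(g_N)=c\,g_N$, then $\bach_g$ is a constant multiple of $3\,dt^2-g_N$, which is parallel on the product.

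The value $t=-1/3$ also does not come from $\operatorname{tr}\bach=0$. By Chern--Gauss--Bonnet, $\int\abs{\weyl}^2=2\int(\abs{\ricc}^2-\frac13S^2)$ up to a term with vanishing first variation, and the gradient of the left side is $-4\bach$. On $\SS^1\times N$ the four-dimensional $\abs{\ricc}^2$ and $S$ are those of $g_N$. Testing with compactly supported variations of the $N$-factor therefore yields $\nabla\mathfrak F_{-1/3}(g_N)=-2\bach_g\restrict{N}$ with no component computation; $B_{tj}=0$ follows from the isometry $t\mapsto -t$.

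If you insist on matching only trace-free parts, you need the observation you only gesture at with ``the contracted second Bianchi identity''. Both $\bach$ and $\nabla\mathfrak F_{-1/3}$ are divergence-free, so a pure-trace tensor $\phi\,g_N$ among them has constant $\phi$.

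The second gap is in the direction ``parallel $\Rightarrow$ critical''. From $\nabla\bach=0$ you only get that $\bach_g\restrict{N}$ is parallel on $N$, but you need it to be a constant multiple of $g_N$. Splitting $N$ further cannot supply this, because in the reducible case the implication fails.

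Take $N=\SS^1\times\Sigma$ with $\Sigma$ of constant curvature $K\neq0$. Then $M=T^2\times\Sigma$ is Bach-parallel by \eqref{bach22comp}, with $\bach_g=-\frac{K^2}{6}(dt^2+ds^2)+\frac{K^2}{6}g_\Sigma$. Yet $\nabla\mathfrak F_{-1/3}(g_N)=\frac{K^2}{3}(ds^2-g_\Sigma)$ is not a multiple of $g_N$, so $g_N$ is not critical.

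The paper obtains $\bach\restrict{\SS^1}=-3c\,g_{\SS^1}$ and $\bach\restrict{N}=c\,g_N$ by applying Theorem \ref{splittingbach} as if $\SS^1\times N$ were the eigen-splitting of $\bach$. This tacitly assumes that $\bach\restrict{N}$ has a single eigenvalue, which is guaranteed, for example, when $N$ is locally irreducible, as is assumed explicitly in the proof of Theorem \ref{bach3cott}. You need such a hypothesis as well, rather than a reduction to the local-product case.
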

	More in detail, if we denote by 
	$\nabla\mathfrak{F}_{-1/3}$ the gradient of $\mathfrak{F}_{-1/3}$, $g_N$ is a critical point if and only if there exists $c\in\RR$ such that $\nabla\mathfrak{F}_{-1/3}=c\cdot g_N$: this result, together with Theorem \ref{splittingbach}, shows that $g_N$ is critical if and only if $\bach_g$ is parallel with
	\[
	\bach_g=\pmat{
		-3c/2 & & &\\
		& c/2 & & &\\
		& & c/2 & &\\
		& & & c/2
	}.
	\]
	In particular, $g$ is Bach-flat if and only if 
	$c=0$, i.e. $g_N$ is critical with respect to full variations, in the closed case, or to compactly supported variations in
	the complete, non-compact case (for a more detailed discussion, see Section \ref{secction global min II}): Theorem \ref{criticalequivalenceintro} therefore suggests that one can construct new Bach-flat metrics on complete, non-compact four manifolds of the form $\SS^1\times N$ by considering
	critical points of $\mathfrak{F}_{-1/3}$ with respect to compactly supported full variations on the three-dimensional factor (we recall that, if $(N,g_N)$ is closed and critical for $\mathfrak{F}_{-1/3}$ with respect to full variations, then $g_N$ has constant sectional curvature).
	
	The study of quadratic curvature functionals, started in \cite{bergerRiemfunc}, has been of interest both in mathematics and physics, as these functionals are related to actions of certain gravitational field theories 
	(see \cite[Chapter 4]{besse} for a more detailed discussion). Over the years, several authors have studied quadratic curvature functionals, not depending on $\mathfrak{W}$, on smooth $n$-dimensional Riemannian manifolds, namely functionals of the form
	\[\mathfrak{F}_{t}(g)=\int_M\abs{\ricc_g}_g^2 \, dV_g +t\int_MS^2_gdV_g,\]
	for $t\in \erre\cup \set{+\infty}$, where the case $t=+\infty$ formally corresponds to the $L^2$-norm of the scalar curvature of $g$. These functionals are scaling invariant only in dimension four, therefore, when $M$ is closed, it is natural to restrict the functional to $\mathscr{M}_1(M)$, i.e. the class of metrics with $\mathrm{Vol}(M)=1$. 
	It is well known that Einstein metrics are critical points for $\mathfrak{F}_{t}$ on $\mathscr{M}_1(M)$, but, in general, they are not the only ones: for instance, it is important to remark that, on closed four-manifolds, by the Chern--Gauss--Bonnet formula $\mathfrak{F}_{-1/3}$ coincides with the Weyl functional, up to adding a multiple of the Euler characteristic of $M$, therefore, they have the same critical points, i.e. are Bach-flat metrics, which, in general, are not Einstein. Moreover, some non-Einstein homogeneous examples have been constructed for $4\mathfrak{F}_{-1/4}$ in \cite{lamontagne} on $\mathscr{M}_1(\mathbb{S}^3)$ and, in \cite{garciario3homog}, the existence of non-Einstein homogeneous critical metrics for quadratic curvature functional on three-manifolds is proven. As a consequence, when $(M^4,g)$ splits as $\mathbb{S}^1\times N$, the examples in \cite{garciario3homog} for $t=-1/3$ lead to the existence of non-trivial Bach-parallel metrics which are not Bach-flat on $M$. This imply that the Bach-parallel condition on a product of the form $\mathbb{S}^1\times N$ does not immediately imply that $N$ has constant sectional curvatures, which is instead implied by the Bach-flat condition on the product metric in the closed case (see e.g. \eqref{bach13comp}).
	On the other hand, it has been proven that  
	a complete non-compact three-manifold which is critical for $\mathfrak{F}_{t}$, $-1/3<t<+\infty$, has to be flat \cite{catmastrmont}; the same conclusion holds if $t=-1/3$, provided that the scalar curvature is non-negative \cite{anderson2}.
	Note that, more in general, some rigidity results and examples for critical points of $\mathfrak{F}_t$  have been proven in \cite{bernardini,catinocalcvar,catmastrmont,cmm2026,gurvia3d,gurviacritical,gurskyviac}. To the best of our knowledge, there exist no results in the literature concerning critical metrics 
	for $\mathfrak{F}_{-1/3}$ in the non-compact case with respect to compactly supported variations which preserve the relative volume, i.e. the ones restricted to symmetric tensors $h$ such that 
	\[
	\int_M\operatorname{tr}(h)=0;
	\]
	for $t=-1/3$, we are able to provide the following rigidity theorem:
	\begin{theorem} \label{thmFintro}
		Let $(N,g_N)$ be a complete Riemannian manifold of dimension $3$ such that $g_N$ is a critical metric for
		$\mathfrak{F}_{-1/3}$ and let $c\geq 0$ such that 
		$(\nabla\mathfrak{F}_{-1/3})=-c\cdot g_N$.
		Assume that 
		\begin{itemize}
			\item $\abs{\ricc}^2\leq 6c$ on $N$;
			\item there exists $p_0\in N$ such that $\mathrm{Vol}(B_R(p_0))\leq C\cdot R^4\ln R$, for 
			$R>>1$;
		\end{itemize}
		then $S\equiv 0$. If, moreover, 
		$\nabla\ricc\in L^2(N)$, then, after passing at worst to a double cover of $N$, $g_N$ is flat.
	\end{theorem}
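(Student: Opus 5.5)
The plan is to use the explicit form of the gradient of $\mathfrak{F}_t$ in dimension three, specialised to $t=-1/3$, and to treat the trace and the trace-free parts of the critical point equation separately. In dimension $n$ one has
\[
\nabla\mathfrak{F}_t=-\Delta\ricc-2t\nabla^2 S+\Big(\tfrac12+2t\Big)\Delta S\, g+2t\,S\ricc-2\,\riem*\ricc-\tfrac12\big(\abs{\ricc}^2+tS^2\big)g .
\]
Here $\riem*\ricc$ denotes the usual contraction $R_{ikjl}R_{kl}$. In dimension three $\riem$ is algebraically determined by $\ricc$ and $S$. I will recheck the constants against the paper's conventions. Taking the trace for $n=3$, $t=-1/3$ and using $(\nabla\mathfrak{F}_{-1/3})=-c\,g_N$ should give an identity of the form
\[
\alpha\,\Delta S=3c-\tfrac12\abs{\ricc}^2+\tfrac16 S^2,\qquad \alpha\neq 0 .
\]
I expect $\alpha=\pm\tfrac13$. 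Under the hypothesis $\abs{\ricc}^2\le 6c$ the right-hand side is at least $S^2/6$. Hence $u:=\pm S$ (sign chosen according to $\alpha$) satisfies the differential inequality $\Delta u\ge k\,u^2$ on $N$ for some $k>0$.

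The first step is a Liouville argument for this inequality. By Kato's inequality, $u_+$ satisfies $\Delta u_+\ge k\,u_+^2$ in the distributional sense. The volume growth $\mathrm{Vol}(B_R(p_0))\le CR^4\ln R$ is exactly the critical threshold $R^{2q/(q-1)}(\ln R)^{1/(q-1)}$ with $q=2$ in the Grigor'yan--Sun Liouville theorem. That theorem then gives $u_+\equiv0$, so $S$ has a sign. To remove the other sign I would return to the identity itself. If $u\le 0$, then $u$ is subharmonic and nonpositive, and the inequality $\Delta u\ge k u^2$ can be tested against $\varphi^4$, with $\varphi$ a standard cutoff on $B_{2R}\setminus B_R$ (or $\varphi$ logarithmic in $R$). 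The resulting estimate should control $\int u^2\varphi^4$ by boundary terms that vanish under the given volume growth, forcing $S\equiv0$. \textbf{This sign issue is the step I expect to be the main obstacle.} If the direct cutoff estimate fails, I would try the Grigor'yan--Sun test function argument applied to $-u$ with the weight $(u^2+\varepsilon)$.

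For the second part, with $S\equiv0$ the trace identity gives $\abs{\ricc}^2\equiv 6c$, so $\abs{\ricc}$ is constant. Pairing the critical point equation with $\ricc$ gives
\[
\langle\Delta\ricc,\ricc\rangle=\tfrac12\Delta\abs{\ricc}^2-\abs{\nabla\ricc}^2=-\abs{\nabla\ricc}^2 .
\]
For trace-free $\ricc$ in dimension three, the cubic terms reduce to multiples of $\operatorname{tr}(\ricc^3)=3\det\ricc$, together with $c\,S=0$. This yields a pointwise identity expressing $\abs{\nabla\ricc}^2$ as a cubic in the eigenvalues of $\ricc$ plus a divergence. I will multiply by a cutoff and use $\nabla\ricc\in L^2(N)$ together with the volume growth to kill the error terms. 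Combining this with the Codazzi-type structure (the contracted Bianchi identity with $S=0$ gives $\diver\ricc=0$), the aim is to conclude $\nabla\ricc\equiv0$.

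Once $\ricc$ is parallel, it is trace-free with constant eigenvalues. Either $\ricc\equiv0$, so $g_N$ is flat in dimension three, or its eigenspaces give a parallel splitting. In the latter case, after passing to a double cover so that the line field of the distinguished eigenvalue is orientable, de Rham's theorem applies as in Theorem \ref{splittingbach}. The cover is thus locally $\RR\times\Sigma$, with Ricci eigenvalues $(0,K,K)$. Trace-freeness forces $K=0$, and hence $g_N$ is flat on the double cover.
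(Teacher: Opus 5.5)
Your first part assigns the two tools to the wrong signs. The trace of the critical equation is $\Delta S=18c-3\abs{\ricc}^2+S^2$ (so your $\alpha$ is $1/6$), hence $\Delta S\ge S^2$. The Grigor'yan--Sun theorem concerns nonnegative solutions of $\Delta w+w^2\le 0$. It therefore says nothing about $S_+$, which satisfies the opposite, Keller--Osserman-type inequality $\Delta S_+\ge S_+^2$. That direction is the easy one. Since $S^2\le 3\abs{\ricc}^2\le 18c$, $S$ is bounded; $\ricc$ is bounded below; and Omori--Yau along a maximizing sequence gives $\sup_N S=0$, i.e. $S\le 0$. This is what the paper does. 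The step you flag as the main obstacle is then exactly the Grigor'yan--Sun setting: $w=-S\ge 0$ satisfies $\Delta w+w^2\le 0$, and $CR^4\ln R$ is their sharp threshold for exponent $2$. A plain $\varphi^4$ cutoff test does not close at this growth. Even granting $\abs{\Delta\varphi}\lesssim R^{-2}$, it only bounds $\int_{B_R}w^2$ by roughly $R^{-4}\mathrm{Vol}(B_{2R})=O(\ln R)$. So your ``fallback'' has to be the main argument. With the tools swapped, $S\equiv0$ follows.

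The genuine gap is in the second part. With $S\equiv0$ and $\abs{\ricc}^2\equiv6c$, pairing $\Delta R_{ij}=4(R_{ik}R_{kj}-\tfrac13\abs{\ricc}^2\delta_{ij})$ with $\ricc$ gives the \emph{pointwise} identity $\abs{\nabla\ricc}^2=-4\operatorname{tr}(\ricc^3)=-12\det\ricc$. There is no divergence term for a cutoff to remove. Moreover, $\diver\ricc=0$ is not a Codazzi condition, since the Cotton tensor need not vanish. Nothing in your outline forces $\det\ricc=0$, so ``the aim is to conclude $\nabla\ricc\equiv0$'' is unsupported, and this is the core of the theorem.

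The paper never proves parallelism directly. Assuming $g_N$ is not flat, it proceeds in three steps:
\begin{enumerate}
\item[(i)] It excludes open sets where the top eigenvalue $\lambda_1$ is double, by a connection-form computation.
\item[(ii)] It shows $\Delta\lambda_1\ge4(\lambda_1^2-\frac13\abs{\ricc}^2)$ wherever $\lambda_1$ is simple, and applies Omori--Yau to a smooth truncation of $\lambda_1$ to get $\sup\lambda_1\le\abs{\ricc}/\sqrt3$. Hence $\lambda_2/\lambda_1\ge(\sqrt3-1)/2$ and $\abs{\nabla\ricc}^2\ge\frac{2}{\sqrt3}\abs{\ricc}^3$, so $\nabla\ricc\in L^2$ forces finite volume. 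This is the only use of that hypothesis; the volume growth plays no role here.
\item[(iii)] Finite volume gives parabolicity. Let $v$ be the unit eigenvector of the simple negative eigenvalue $\lambda_3$ and $\phi_j$ capacity cutoffs; plug $\phi_j v^{\flat}$ into the Bochner formula for $1$-forms. Together with $\abs{\nabla v}^2\le-\frac34\lambda_3$, this yields $\frac14\int_N(-\lambda_3)\le0$, a contradiction.
\end{enumerate}
The double cover is needed in (iii), to make $v$ globally defined, not for a de Rham splitting (which is local anyway). Your closing step is fine: parallel, trace-free $\ricc$ in dimension three does force flatness. But you have no route to parallelism.
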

	We point out that in the first part of the proof of Theorem \ref{thmFintro} we rely on a result of Grigor'yan and Sun \cite{grigsun} and on the Omori--Yau maximum principle to conclude that the scalar curvature of $(N,g_N)$ is zero: the finite energy assumption on
	$\nabla\ricc$, together with some algebraic estimates, allows us to deduce that either $(N,g_N)$ is flat or it has finite volume.
	At this point, we note that $\ricc$ can be seen 
	as a linear, self-adjoint, trace-free operator 
	on $\Lambda^1N$, which means that, to some extent, 
	it shares some algebraic properties with the \linebreak
	(anti-)self-dual Weyl operator 
	$\weyl^{\pm}$ in dimension four: hence, using the resulting parabolicity of $(N,g_N)$, we are able to
	adapt arguments used by Catino and the second author \cite{catinodameno}, inspired by the work of LeBrun \cite{lebrun}, to find the desired contradiction in the finite volume case. We remark that, with this technique, the flatness conclusion holds at worst on a double cover of $N$: indeed, for our computations to hold, it is necessary to assume that the line bundle generated by the eigenspace of a simple eigenvalue of $\ricc$ is trivial, which is always true, up to passing to a double cover. 
	We note that the assumption on the volume growth of $(N,g_N)$ is used to show that there are no non-negative solutions to the inequality $\Delta u+u^2\leq 0$ \cite{grigsun}: if we remove or weaken this hypothesis, we might not be able to conclude that $S\equiv0$ (see Remark \ref{remvolgr} for more details).
	
	Finally, we obtain a rigidity result under an extra assumption on the Cotton tensor of the four-manifold: indeed, we show that, if $(M,g)$ is a complete four-manifold with parallel Bach tensor and vanishing Cotton tensor, then either $(M,g)$ is Bach-flat or it splits as a product of surfaces with constant sectional curvatures 
	(Theorem \ref{bach3cott}). We highlight the fact that the proof of this result relies on local computations inspired by 
	the work of G\l{}odek \cite{glodek}: as shown later, this implies that the result can be generalized, to some extent, to the case where there exists a general symmetric, parallel $(0,2)$-tensor on $(M,g)$ (see Remark \ref{remglodek}).
	
	This paper is organized as follows. In Section \ref{section prel}, we fix the notation and we recall some well-known preliminary facts; in Section \ref{functionalsection}, we introduce the functional $\mathcal{B}$ and its critical points, computing its Euler--Lagrange equations with respect to conformal variations and showing how it can be regarded as a PDE for the conformal factor. Section \ref{section proof main thm} is devoted to the proof of Theorem \ref{weakbachparallel}. Section \ref{secction global min I} is dedicated to global minima of $\mathcal{B}$ on products of surfaces, for which we prove Theorem \ref{bachpar22}, Theorem \ref{thm example} and Corollary \ref{corextKahler}; Section \ref{secction global min II} concerns the Bach-parallel condition on products of the form $\SS^1\times N^3$ and contains the proofs of Theorem \ref{criticalequivalenceintro}, Theorem \ref{thmFintro} and
	Theorem \ref{bach3cott}. 
	\addtocontents{toc}{\protect\setcounter{tocdepth}{0}}
	\section*{Acknowledgements}
	
	We would like to sincerely thank Dario Monticelli for helpful and valuable discussions concerning Section \ref{section proof main thm}. We also express our deep gratitude to Giovanni Catino, Alberto Farina, Michele Gatti, Lorenzo Maniscalco and Luciano Mari for their interest in our work and for their insightful inputs regarding the making of Section \ref{secction global min I} and Section \ref{secction global min II}. We also thank Paolo Mastrolia for reading the first draft of the work and giving us useful suggestions which highly improved the paper. Both authors are members of the Gruppo Nazionale per le Strutture Algebriche, Geometriche e loro Applicazioni (GNSAGA) of INdAM (Istituto Nazionale di Alta Matematica). 
	
	\vspace{0.2cm}
	\textbf{AI usage statement.} No AI tool was used to generate ideas or proofs of the theorems and the text of this paper was completely written by the authors. AI tools were only used to find bibliographic references and to check some technical details in Step 2 of the proof of Theorem \ref{thmFintro}.
	\addtocontents{toc}{\protect\setcounter{tocdepth}{3}}
	
	\section{Preliminaries and notation}\label{section prel}
	Given a $n$-dimensional Riemannian manifold $(M,g)$, we adopt the following convention for the Riemann tensor: the $(1,3)$-version is given by
	\[
	R(X,Y)Z=\nabla_X\nabla_YZ-\nabla_Y\nabla_XZ-\nabla_{[X,Y]}Z, \qquad \forall\, X,\,Y,\,Z \in \mathfrak{X}(M)
	\]
	while we define its $(0,4)$-version as
	\[
	\mathrm{Riem}_g(X,Y,Z,W)=g(R(Z,W)Y,X)\,, \qquad \forall\: X,\,Y,\,Z,\,W \in \mathfrak{X}(M).
	\]
	Throughout the paper, we will denote the local components
	of the covariant derivative of tensors as follows: 
	given a local orthonormal frame $\{e_1,...,e_n\}$ on an open chart of $M$ and a $(0,r)$-tensor field $\operatorname{T}$, 
	the components of $\nabla\operatorname{T}$ with respect
	to the chosen frame will be written as $T_{i_1,...,i_r,k}$,
	where 
	\begin{equation} \label{covderformula}
		T_{i_1,...,i_r,k}=e_k(T_{i_1,...,i_r})-
		\sum_{j=1}^rT_{i_1,...,i_{j-1}pi_{j+1},...,i_r}\omega_{i_j}^p(e_k)
	\end{equation}
	and $\{\omega_j^i\}_{i,j=1}^n$ are the Levi--Civita connection forms associated to the dual coframe 
	$\{\omega^1,...,\omega^n\}$ \emph{via} Cartan's first structure equations (see e.g. \cite{cmbook}). We point out
	that, unless specified otherwise, we adopt Einstein's summation convention over repeated indices; we also
	write $T_{i_1,...,i_r,k_1,...,k_s}$ for the components of 
	$\nabla_{k_s}\nabla_{k_{s-1}}\cdots\nabla_{k_1}\operatorname{T}$, where 
	$\nabla_k=\nabla_{e_k}$. We will always use the convention $\Delta=\operatorname{tr}_g(\hess)$ both for 
	smooth functions and tensors: with respect to a local orthonormal frame, we write 
	\[
	\Delta f=f_{kk}, \quad \Delta\operatorname{T}_{i_1,...,i_r}=T_{i_1,...,i_r,kk}.
	\]
	
	It is well known that, when $n\geq3$, the Riemann curvature tensor can be written as 
	\begin{align}\label{riemdec}
		\operatorname{Riem}_g=\weyl_g+\dfrac{1}{n-2}
		\ricc_g\KN g
		-\dfrac{S_g}{2(n-1)(n-2)}g\KN g,
	\end{align}
	where $\weyl_g$, $\ricc_g$ and $S_g$ denote the \emph{Weyl tensor}, the \emph{Ricci tensor} and the \emph{scalar curvature} of $g$, respectively, and $\KN$ is the Kulkarni--Nomizu product. With respect to a (local) orthonormal frame equation \eqref{riemdec} rewrites as
	\[R_{ijkt}=W_{ijkt}+\dfrac{1}{n-2}\pa{R_{ik}\delta_{jt}
		-R_{it}\delta_{jk}+R_{jt}\delta_{ik}-R_{jk}\delta_{it}}
	-\dfrac{S}{(n-1)(n-2)}\pa{\delta_{ik}\delta_{jt}-\delta_{it}\delta_{jk}},\]
	where $R_{ij}=R_{ikjk}$ and $S=R_{ii}$.
	
	The Weyl tensor has the same symmetries as the Riemann curvature tensor, it is trace-free and for $n=3$ vanishes identically; most importantly, the $(1,3)$-version is conformally invariant, i.e., if we define
	$\tilde{g}=e^{2u}g$, 
	where $u\in\cinf$, then 
	\[
	\weyl_{\tilde{g}}=\weyl_g.
	\] 
	Throughout this paper, given a metric $\tilde{g}$ in the conformal class $[g]$, we might write 
	$\tilde{\weyl}$ instead of $\weyl_{\tilde{g}}$, for
	the sake of simplicity: this will be true also
	for the other geometric quantities. 
	
	The Weyl tensor is obviously strictly related to the conformal structure of $(M,g)$: 
	indeed,  when $n\geq4$, $g$ is \emph{locally conformally flat} (i.e. it can be locally rescaled in order to obtain a flat metric), if and only if $\weyl\equiv 0$. Since in dimension three $\weyl$ identically vanishes for every Riemannian metric, the local conformal flatness is equivalent to the vanishing of the so-called \emph{Cotton tensor} 
	$\operatorname{C}$, which is locally defined as
	\[
	C_{ijk}=R_{ij,k}-R_{ik,j}-\frac{S_k}{2(n-1)}\delta_{ij}+\frac{S_j}{2(n-1)}\delta_{ik}.
	\]
	The Cotton tensor is skew-symmetric with respect to the last two indices, it is totally trace-free and it satisfies the following identities
	\[
	C_{ijk}+C_{jki}+C_{kij}=0,\qquad C_{ijk,i}=0;
	\] 
	furthermore, a computation shows that, for $n\geq4$, the Cotton tensor is proportional to the divergence of the Weyl tensor
	\[C_{ijk}=\pa{\frac{n-2}{n-3}}W_{tikj,t}\]
	(see e.g. \cite{cmbook}).\\
	The \emph{Schouten tensor} $\operatorname{P}$ is defined as
	\[\operatorname{P}=\ricc-\frac{S}{2(n-1)}g;\]
	it is a symmetric $(0,2)$-tensor and its components are
	\[P_{ij}=R_{ij}-\frac{S}{2(n-2)}\delta_{ij}.\]
	Using the Schouten tensor we rewrite the decomposition \eqref{riemdec} as
	\[\riem=\weyl+\frac{1}{n-2}\operatorname{P}\KN g;\]
	we point out that the Cotton tensor can be seen as the obstruction to $\operatorname{P}$ being Codazzi, since
	\[C_{ijk}=P_{ij,k}-P_{ik,j}.\]
	The curvature quantity we are interested in, which is deeply related to conformal geometry, is the \emph{Bach tensor}: when $n\geq4$, it is locally defined as
	\begin{align}\label{bachdef}
		B_{ij}=\dfrac{1}{n-3}W_{ikjl,lk}+\dfrac{1}{n-2}R_{kl}W_{ikjl}=
		\dfrac{1}{n-2}\pa{C_{ijk,k}+R_{kl}W_{ikjl}},
	\end{align}
	while for $n=3$ its components are
	\[B_{ij}=C_{ijk,k}.\]
	The Bach tensor is trace-free and symmetric; moreover it can be expressed using the following formula
	\begin{align}\label{bachdef2}
		(n-2)B_{ij}=&\Delta R_{ij}-\frac{1}{2} \cdot \left(\frac{n-2}{n-1}\right)S_{ij}-\left(\frac{n-4}{n-2}\right)R_{ik}R_{kj}+2R_{kl}R_{ikjl}\\
		&-\frac{n}{(n-1)(n-2)}SR_{ij}-\frac{1}{2(n-1)}\Delta S\delta_{ij}-\frac{1}{n-2}\Big(\vert \ricc \vert^2-\frac{S^2}{n-1}\Big)\delta_{ij}\notag
	\end{align}
	(see e.g. \cite[Lemma 1.5]{cmbook} for a proof). This geometric quantity was first introduced by Bach \cite{Bach1921} in the context of Conformal Relativity and in dimension four it is divergence-free and \emph{conformally covariant}, i.e. given $\tilde{g}=e^{2u}g$,
	\[
	\tilde{\bach}=e^{-2u}\bach,
	\]
	where $\tilde{\bach}$ is the Bach tensor of $\tilde{g}$: as a consequence, the condition 
	\begin{align}\label{bachflat}
		\bach\equiv 0
	\end{align}
	is conformally invariant. Riemannian metrics satisfying \eqref{bachflat} are said to be \emph{Bach-flat}: as we recalled in the Introduction, these metrics arise as critical points of the so-called \emph{Weyl functional} in dimension four, i.e. 
	\[
	\mathfrak{W}(g)=\int_M\abs{\weyl}^2dV_g.
	\]
	This functional can be defined in every dimension as $\norm{\weyl}_{L^{n/2}(M)}^{n/2}$, although the critical points are not characterized by the Bach-flat condition if $n>4$.
	
	
	While relations between topology and the existence of Bach-flat metrics in dimension four are not well understood, if $n=3$, Bach-flatness is equivalent to local conformal flatness:
	\begin{proposition} \label{bachflat3}
		Let $(M,g)$ be a three-dimensional Riemannian manifold. Then, $g$ is Bach-flat if and only if it is locally conformally flat.
	\end{proposition}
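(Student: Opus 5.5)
The implication ``locally conformally flat $\Rightarrow$ Bach-flat'' is immediate. In dimension three $\weyl\equiv 0$, so local conformal flatness is equivalent to $\operatorname{C}\equiv 0$. Since $B_{ij}=C_{ijk,k}$ for $n=3$, the vanishing of $\operatorname{C}$ gives $\bach\equiv 0$ directly. All the work is in the converse, and I would obtain it from an integral identity which relates $\abs{\operatorname{C}}^2$ to the pairing of $\bach$ with the Schouten tensor, up to a divergence.

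The key steps, in order, are as follows.
\begin{enumerate}
\item Use $C_{ijk}=P_{ij,k}-P_{ik,j}$ and the skew-symmetry of $\operatorname{C}$ in its last two indices to write
\[
\abs{\operatorname{C}}^2=C_{ijk}\pa{P_{ij,k}-P_{ik,j}}=2\,C_{ijk}P_{ij,k}.
\]
\item Integrate by parts pointwise:
\[
2\,C_{ijk}P_{ij,k}=2\pa{C_{ijk}P_{ij}}_{,k}-2\,C_{ijk,k}P_{ij}=\diver X-2\,B_{ij}P_{ij},
\qquad X_k:=2\,C_{ijk}P_{ij}.
\]
Here I use the definition $B_{ij}=C_{ijk,k}$ valid in dimension three.
\item If $\bach\equiv 0$, this yields
\[
\abs{\operatorname{C}}^2=\diver X .
\]
So $\abs{\operatorname{C}}^2$ is an exact divergence of a vector field built from $\operatorname{C}$ and $\operatorname{P}$.
\item Integrate over $M$. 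When $M$ is closed, or when the fields involved have compact support, the divergence theorem gives $\int_M\abs{\operatorname{C}}^2dV=0$. Hence $\operatorname{C}\equiv 0$, i.e. $g$ is locally conformally flat.
\item In the complete non-compact setting, integrate against $\varphi^2$, where $\varphi$ is a standard cutoff function on $B_{2R}$ equal to $1$ on $B_R$. The boundary term is controlled by
\[
\int 4\varphi\abs{\nabla\varphi}\abs{\operatorname{C}}\abs{\operatorname{P}} .
\]
Young's inequality absorbs half of $\int\varphi^2\abs{\operatorname{C}}^2$. What remains is $C R^{-2}\int_{B_{2R}}\abs{\operatorname{P}}^2$, which tends to zero as $R\to\infty$ under a finite-energy assumption on $\operatorname{P}$.
\end{enumerate}

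The main obstacle is this last globalization step. Steps 1--3 are purely pointwise and hold on any three-manifold. The conclusion $\operatorname{C}\equiv 0$, however, needs a global argument to discard $\diver X$: $\bach\equiv 0$ alone only says that $\abs{\operatorname{C}}^2$ is a divergence. So the proof genuinely relies on closedness, or on decay or integrability of $\operatorname{P}$ in the non-compact setting, and the precise hypothesis should be made explicit when the converse is applied.

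I would also double-check the sign and factor conventions in the identity $C_{ijk}=P_{ij,k}-P_{ik,j}$, since the Schouten tensor is written in the paper with two different normalizations. Only the combination $\abs{\operatorname{C}}^2=\diver X-2\,B_{ij}P_{ij}$ matters, and a constant factor is harmless.
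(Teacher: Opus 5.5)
Your Steps 1--3 are correct, but Steps 4--5 only prove the converse in two special cases. The first is $M$ closed, which is essentially the integral argument in the Remark following the proposition in the paper. The second is $M$ complete with $\operatorname{P}\in L^2$. The proposition is stated for an arbitrary three-dimensional Riemannian manifold, with no compactness, completeness or integrability assumption, so your proposal does not prove it as written. Your claim that a global hypothesis is genuinely needed is also false, because the implication is pointwise.

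The missing ingredient is the divergence identity for the Bach tensor, $B_{ij,j}=\frac{n-4}{(n-2)^2}C_{jki}R_{jk}$. For $n=3$ it reads $B_{ij,j}=-C_{jki}R_{jk}$. You can check this directly:
\begin{itemize}
\item By the skew-symmetry of $\operatorname{C}$ in its last two indices, $B_{ij,j}=C_{ijk,kj}=\frac{1}{2}\pa{C_{ijk,kj}-C_{ijk,jk}}$ is a commutator of covariant derivatives.
\item The Ricci identity turns this commutator into a contraction of $\operatorname{C}$ with $\riem$.
\item In dimension three, write $\riem$ in terms of $\ricc$ and use that $\operatorname{C}$ is totally trace-free; only $-C_{jki}R_{jk}$ survives.
\end{itemize}

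Since $\operatorname{C}$ is trace-free, your vector field is $X_k=2C_{ijk}P_{ij}=2C_{ijk}R_{ij}$. The identity above therefore says precisely that $X=-2\diver\bach$. Your Step 2 then upgrades to the pointwise formula
\[
\abs{\operatorname{C}}^2=-2\pa{B_{ij,ij}+B_{ij}R_{ij}},
\]
which holds on every three-manifold. In particular, $\bach\equiv 0$ forces $X\equiv 0$, hence $\abs{\operatorname{C}}^2=\diver X=0$ at every point, with no integration at all.

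This is exactly the paper's argument. From $\diver\bach=0$ it deduces $C_{ijk}R_{ij}=0$. It then differentiates and traces to get $0=C_{ijk,k}R_{ij}+C_{ijk}R_{ij,k}=B_{ij}R_{ij}+\frac{1}{2}\abs{\operatorname{C}}^2=\frac{1}{2}\abs{\operatorname{C}}^2$. So replace Steps 4--5, and the caveat about global hypotheses, by this identity.
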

	\begin{proof}
		Since, in dimension $3$, 
		\[
		B_{ij}=C_{ijk,k},
		\]
		it is clear that every locally conformally flat 
		metric is also Bach-flat.
		Now, assume that $g$ is Bach-flat:
		we recall that, for every Riemannian manifold $(M,g)$ of dimension $n\geq 3$,
		\[
		B_{ij,j}=\dfrac{n-4}{(n-2)^2}C_{jki}R_{jk}:
		\]
		(see e.g. \cite{caochencat} for 
		a proof in the three-dimensional case and \cite{cmbook} for a general proof).
		
		In particular, since $g$ is Bach-flat by hypothesis, we have that
		\begin{equation} \label{cottonid}
			C_{ijk}R_{ij}=0. 
		\end{equation}
		By differentiating \eqref{cottonid}, we have
		\[
		0=C_{ijk,l}R_{ij}+C_{ijk}R_{ij,l};
		\]
		tracing with respect to $k$ and $l$ and using the Bach-flatness
		hypothesis, we obtain
		\begin{equation} \label{cottonderid}
			C_{ijk}R_{ij,k}=0.
		\end{equation}
		Now, by the properties 
		of $\operatorname{C}$, \eqref{cottonderid} becomes
		\[
		0=C_{ijk}R_{ij,k}=\dfrac{1}{2}C_{ijk}(R_{ij,k}-R_{ik,j})=\dfrac{1}{2}\abs{\operatorname{C}}^2:
		\]
		hence $\operatorname{C}\equiv 0$ on $M$, which means that
		$(M,g)$ is locally conformally flat.
	\end{proof}
	\begin{rem}
		It can be easily shown that, on every closed Riemannian manifold
		$(M,g)$ of dimension $n\geq 3$, $C_{ijk,k}=0$ on $M$ if and only if 
		$\operatorname{C}\equiv0$ on $M$. Indeed, 
		\[
		0=\int_MC_{ijk,k}P_{ij}dV_g=-\int_MC_{ijk}P_{ij,k}dV_g=
		-\dfrac{1}{2}\int_MC_{ijk}(P_{ij,k}-P_{ik,j})dV_g=-\dfrac{1}{2}\int_M\abs{\operatorname{C}}^2dV_g.
		\]
	\end{rem}
	
	\section{The Bach functional} \label{functionalsection}
	Let $(M,g)$ be a closed Riemannian manifold $n\geq 4$. We recall that the Bach tensor 
	$\bach$ is locally
	defined as
	\begin{equation}
		B_{ij}=\dfrac{1}{n-3}W_{ikjl,lk}+\dfrac{1}{n-2}R_{kl}W_{ikjl}=
		\dfrac{1}{n-2}\pa{C_{ijk,k}+R_{kl}W_{ikjl}}.
	\end{equation}
	Let $u\in\cinf$ and let us consider a conformal change of the
	metric $g$
	\[
	\tilde{g}=e^{2u}g;
	\]
	if $n=4$, the
	Bach tensor is conformally covariant, i.e.
	\begin{equation} \label{bachconfchange}
		e^{4u}\tilde{B}_{ij}=B_{ij},
	\end{equation}
	with respect to a local orthonormal coframe. This allows
	us to write the conformal change of the components of 
	$\nabla\bach$ as follows:
	\begin{equation} \label{nablabachconfchange}
		e^{5u}\tilde{B}_{ij,k}=B_{ij,k}-
		u_iB_{jk}-u_jB_{ik}-4u_kB_{ij}+u_t\pa{
			B_{it}\del{jk}+B_{jt}\del{ik}}.
	\end{equation}
	By a straightforward computation, from \eqref{nablabachconfchange}
	we can derive the formula for the transformed squared norm
	of $\nabla\bach$:
	\begin{equation} \label{squarenablabachconf}
		e^{10u}\abs{\nabla\tilde{\bach}}_{\tilde{g}}^2=
		\abs{\nabla\bach}^2_g-4u_iB_{ij,k}B_{jk}-8u_kB_{ij,k}B_{ij}+
		18\abs{\nabla u}^2_g\cdot\abs{\bach}^2_g+
		8u_pu_qB_{ip}B_{iq}.
	\end{equation}
	Let us locally define the tensor $\operatorname{A}$ as 
	\[
	A_i:=B_{ij,k}B_{jk};
	\]
	then, \eqref{squarenablabachconf} can be rewritten as
	\begin{equation} \label{squarenablabachconf2}
		e^{10u}\abs{\nabla\tilde{\bach}}_{\tilde{g}}^2=
		\abs{\nabla\bach}^2_g-4\langle\operatorname{A}^{\#},\nabla u\rangle
		-4\langle\nabla\abs{\bach}_g^2,\nabla u\rangle
		+18\abs{\nabla u}^2_g\cdot\abs{\bach}^2_g+
		\bach^2\pa{\nabla u,\nabla u},
	\end{equation}
	where $\bach^2=\bach\circ\bach$ and 
	$\operatorname{A}^{\#}\in\mathfrak{X}(M)$ is
	the only smooth vector field such that 
	$\langle\operatorname{A}^{\#},X\rangle=\operatorname{A}(X)$, for every $X\in\mathfrak{X}(M)$.  
	
	Our goal is to show the existence of critical metrics for a
	Riemannian functional which involves the covariant derivative of
	the Bach tensor, following the argument exploited in \cite{cmmp} to
	prove the existence of weak harmonic Weyl metrics on every closed
	$4$-manifold. We define the following functional:
	\begin{equation} \label{bachfunct}
		\mathcal{B}: 
		g\longmapsto \mathrm{Vol}_g(M)^{\frac{3}{2}}\int_M\abs{\nabla\bach_g}_g^2dV_g,
	\end{equation}
	where $\mathrm{Vol}_g(M)$ is the volume of $M$ with respect to $g$.
	It is clear that \eqref{bachfunct} is scale invariant, although, by 
	\eqref{squarenablabachconf2}, it is not conformally invariant. 
	We say that a metric $g$ is \emph{weak Bach-parallel} if it is a
	critical point for the functional $\mathcal{B}$. 
	
	We want to study critical metrics for the functional 
	\eqref{bachfunct}: in order to do so, we derive the Euler-Lagrange
	equations, computing the first variation of the functional in the
	following way. Let
	\[
	u:M\times\RR\longra\RR
	\]
	be a smooth function such that $u(x,0)=0$ and 
	$\frac{du}{dt}(x,0)=U(x)$, where $U\in\cinf$: for every $t\in\RR$,
	we consider the conformal change $\tilde{g}(t)=e^{2u(x, t)}g$ and
	we obtain the following
	\begin{proposition}
		Let $(M,g)$ be a closed Riemannian four-manifold such that
		$g$ is weak Bach-parallel. Then, the Euler-Lagrange equation
		\begin{equation} \label{eulerlagrange}
			\dfrac{1}{2}\Delta\abs{\bach}_g^2=
			\dfrac{3}{4}\abs{\nabla\bach}_g^2-\dfrac{1}{2}
			\diver_g\operatorname{A}-\dfrac{3}{4V}\int_M\abs{\nabla\bach}_g^2
			dV_g
		\end{equation}
		is satisfied, where $V=\mathrm{Vol}_g(M)$.
	\end{proposition}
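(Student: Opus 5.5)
We differentiate $t\mapsto\mathcal{B}(\tilde g(t))$ at $t=0$, using the conformal transformation law \eqref{squarenablabachconf2}. In dimension four $dV_{\tilde g}=e^{4u}dV_g$. Hence
\[
\int_M\abs{\nabla\tilde\bach}^2_{\tilde g}dV_{\tilde g}=\int_M e^{-6u}\Big(\abs{\nabla\bach}^2_g-4\langle\operatorname{A}^{\#},\nabla u\rangle-4\langle\nabla\abs{\bach}^2_g,\nabla u\rangle+Q(\nabla u)\Big)dV_g,
\]
where $Q(\nabla u)=18\abs{\nabla u}^2\abs{\bach}^2+\bach^2(\nabla u,\nabla u)$ is quadratic in $\nabla u$. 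Since $u(\cdot,0)=0$, both $Q$ and its first $t$-derivative vanish at $t=0$. We also have $\mathrm{Vol}_{\tilde g}(M)=\int_M e^{4u}dV_g$, so $\frac{d}{dt}\big|_{t=0}\mathrm{Vol}_{\tilde g}(M)^{3/2}=6V^{1/2}\int_M U\,dV_g$.

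Differentiating at $t=0$ gives
\[
\frac{d}{dt}\Big|_{t=0}\int_M\abs{\nabla\tilde\bach}^2dV_{\tilde g}=\int_M\Big(-6U\abs{\nabla\bach}^2-4\langle\operatorname{A}^{\#},\nabla U\rangle-4\langle\nabla\abs{\bach}^2,\nabla U\rangle\Big)dV_g.
\]
Integrating by parts on the closed manifold $M$, this equals
\[
\int_M U\Big(-6\abs{\nabla\bach}^2+4\diver_g\operatorname{A}+4\Delta\abs{\bach}^2\Big)dV_g.
\]
The full derivative of $\mathcal{B}$ is therefore
\[
V^{3/2}\int_M U\Big(-6\abs{\nabla\bach}^2+4\diver_g\operatorname{A}+4\Delta\abs{\bach}^2\Big)dV_g+6V^{1/2}\int_M U\,dV_g\int_M\abs{\nabla\bach}^2dV_g.
\]

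A weak Bach-parallel metric makes this vanish for every $U\in\cinf$. By the fundamental lemma of the calculus of variations, we get pointwise
\[
4\Delta\abs{\bach}^2=6\abs{\nabla\bach}^2-4\diver_g\operatorname{A}-\frac{6}{V}\int_M\abs{\nabla\bach}^2dV_g.
\]
Dividing by $8$ yields exactly \eqref{eulerlagrange}.

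The main point to check is the bookkeeping in the derivative of the integrand. The weight $e^{-6u}$ comes from combining $e^{-10u}$ with the volume factor $e^{4u}$. The linear terms in $\nabla u$ contribute only through $\nabla U$, with no extra factors, because $e^{-6u}=1$ at $t=0$. The quadratic terms drop out because $\nabla u$ vanishes at $t=0$. With the coefficients as given in \eqref{squarenablabachconf2}, the computation produces the constants $3/4$ and $1/2$ stated in the proposition. If one instead used \eqref{squarenablabachconf}, the term $-8u_kB_{ij,k}B_{ij}$ equals $-4\langle\nabla\abs{\bach}^2,\nabla u\rangle$, so the two forms agree.
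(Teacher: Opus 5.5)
Your proposal is correct and follows the same route as the paper: differentiate $t\mapsto\mathcal{B}(e^{2u(\cdot,t)}g)$ at $t=0$ using the conformal transformation law for $\abs{\nabla\bach}^2$, drop the terms quadratic in $\nabla u$, integrate by parts on the closed manifold, and apply the fundamental lemma. Your signs are the consistent ones. The paper's displayed intermediate lines contain a couple of sign slips, for instance $+6\abs{\nabla\bach}^2$ appearing after the integration by parts. Your version correctly produces the constants $3/4$, $1/2$ and $3/(4V)$ in the stated equation.
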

	\begin{proof}
		The transformed volume form $dV_{\tilde{g}(t)}$ under the
		conformal change is given by
		\[
		dV_{\tilde{g}(t)}=e^{4u}dV_g,
		\]
		which implies that
		\[
		\mathrm{Vol}_{\tilde{g}(t)}(M)=\int_M e^{4u}dV_g.
		\]
		By \eqref{squarenablabachconf} and \eqref{squarenablabachconf2},
		we can compute
		\begin{align*}
			\mathcal{B}(\tilde{g}(t))&=
			\pa{\mathrm{Vol}_{\tilde{g}(t)}(M)}^{\frac{3}{2}}\int_M\abs{\nabla\tilde{\bach}}_{\tilde{g}(t)}^2dV_{\tilde{g}(t)}=\\
			&=\pa{\int_Me^{4u(x,t)}dV_g}^{\frac{3}{2}}
			\int_Me^{-6u(x,t)}\left[\abs{\nabla\bach}_g^2-
			4u_i(x,t)A_i-4u_k(x,t)\pa{\abs{\bach}_g^2}_k+\right.\\
			&+\left.
			18\abs{\nabla u}_g^2(x,t)\cdot\abs{\bach_g^2}+
			8u_p(x,t)u_q(x,t)B_{ip}B_{iq}\right]dV_g=\\
			&=\pa{\int_Me^{4u(x,t)}dV_g}^{\frac{3}{2}}
			\int_Me^{-6u(x,t)}\left[\abs{\nabla\bach}_g^2-
			4\langle\operatorname{A}^{\#},\nabla u\rangle-
			4\langle\nabla\abs{\bach}_g^2,\nabla u\rangle+\right.\\
			&+\left.18\abs{\nabla u}_g^2(x,t)\cdot\abs{\bach_g^2}+
			8\bach^2\pa{\nabla u,\nabla u}\right]dV_g.
		\end{align*}
		Now, recalling the properties of $u$, we obtain
		\begin{align*}
			\dfrac{d}{dt}\restrict{t=0}\mathcal{B}(\tilde{g}(t))&=
			6\pa{\mathrm{Vol}_g(M)}^{\frac{1}{2}}\int_M\abs{\nabla\bach}_g^2
			dV_g\int_M U \, dV_g+\\
			&-\pa{\mathrm{Vol}_g(M)}^{\frac{3}{2}}
			\int_M\sq{6\abs{\nabla\bach}_g^2 U+4U_iA_i-4U_k\pa{\abs{
						\bach}_g^2}_k}dV_g;
		\end{align*}
		integrating by parts, we get
		\begin{align*}
			\dfrac{d}{dt}\restrict{t=0}\mathcal{B}(\tilde{g}(t))&=
			6\pa{\mathrm{Vol}_g(M)}^{\frac{1}{2}}\int_M\abs{\nabla\bach}_g^2
			dV_g\int_M U \, dV_g+\\
			&+\pa{\mathrm{Vol}_g(M)}^{\frac{3}{2}}
			\int_M\sq{6\abs{\nabla\bach}_g^2+4\diver_g\operatorname{A}+
				4\Delta\abs{\bach}_g^2}U\, dV_g.
		\end{align*}
		Since a critical point for \eqref{bachfunct} in the conformal
		class of $g$ is a metric such that
		\[
		\dfrac{d\mathcal{B}(\tilde{g}(t))}{dt}\restrict{t=0}=0, \, 
		\forall U\in\cinf,
		\]
		we immediately obtain \eqref{eulerlagrange}. 
	\end{proof}
	\begin{rem}
		We can locally rewrite the divergence of $\operatorname{A}$ 
		exploiting the commutation rules for the second covariant
		derivatives of symmetric $(0,2)$-tensors 
		(see \cite[Chapter 2]{cmbook}): namely, we have that
		\[
		B_{ij,kl}-B_{ij,lk}=
		B_{pj}R_{pikl}+B_{ip}R_{pjkl}.
		\]
		Therefore, by the fact that $\bach$ is divergence-free
		on every four-manifold, we can compute
		\[
		\diver_g\operatorname{A}=A_{ii}=
		(B_{ij,k}B_{jk})_i=B_{ij,ki}B_{jk}+B_{ij,k}B_{jk,i}=
		B_{jk}B_{jl}R_{kl}+B_{il}B_{jk}R_{ljki}+B_{ij,k}B_{jk,i}.
		\]
	\end{rem}
	Now, in order to show the existence of weak Bach-parallel metrics, 
	we need to fix a reference Riemannian metric such that the associated
	Bach tensor nowhere vanishes: the existence of such metrics on any
	closed smooth four-manifold is proven in \cite{cdmbach}. 
	Hence, let $g$ be a Riemannian metric on $M$ such that 
	$\abs{\bach}_{g}\not\equiv 0$ on $M$ and let
	$v=e^{-3u}$ for some $u\in\cinf$. 
	If we consider the conformal change $\tilde{g}=v^{-\frac{2}{3}}g$,
	we can rewrite \eqref{squarenablabachconf2} as
	\begin{equation}
		\abs{\nabla\tilde{\bach}}_{\tilde{g}}^2=
		v^{\frac{10}{3}}\abs{\nabla\bach}_g^2-
		\dfrac{4}{3}v^{\frac{7}{3}}
		\pa{\langle\operatorname{A}^{\#},\nabla v\rangle+\langle
			\nabla\abs{\bach}_g^2,\nabla v\rangle}+
		v^{\frac{4}{3}}\pa{18\abs{\nabla v}_g^2\cdot\abs{\bach}_g^2+
			8\bach^2(\nabla v,\nabla v)}.
	\end{equation}
	Then, we have
	\begin{align*}
		\mathcal{B}(v)&:=\mathcal{B}(v^{-\frac{2}{3}}g)=\pa{\int_M v^{-\frac{4}{3}}dV_g}^{\frac{3}{2}}
		\Biggl\{\int_Mv^{2}\Big[\abs{\nabla \bach}_g^2+\frac{4}{3v}\langle \operatorname{A}^{\#},\nabla v\rangle\\
		&+\frac{4}{3v}\langle \nabla \abs{\bach}^2_g,\nabla v\rangle
		+\frac{2}{v^2}\abs{\nabla v}^2_g\abs{\bach}^2_g+\frac{8}{9v^2}\bach^2(\nabla v,\nabla v)\Big]dV_g\Biggr\};
	\end{align*}
	integrating by parts, we obtain
	\begin{align} \label{bachfunctv}
		\mathcal{B}(v)&=\pa{\int_M v^{-\frac{4}{3}}dV_g}^{\frac{3}{2}}\left\{
		\int_M -2v\diver_g\pa{\abs{\bach}_g^2\nabla v}
		-\dfrac{8}{9}v\diver_g\pa{\operatorname{T}_v}+\right.\\
		&\left.-\dfrac{2}{3}v^2\pa{\diver_g\operatorname{A}+
			\Delta\abs{\bach}_g^2}+v^2\abs{\nabla\bach}_g^2 dV_g\right\}, 
		\notag
	\end{align}
	where
	\begin{equation} \label{divbachsquare}
		\operatorname{T}_f(X):=\bach^2(\nabla f, X), \quad
		\mbox{ for every } f\in\cinf, \mbox{ } X\in\mathfrak{X}(M).
	\end{equation} 
	Let $\tilde{v}:=v+t\phi$, where $\phi $ is a positive smooth function on $M$; therefore, we obtain
	\begin{align*}
		\frac{d}{dt}|_{t=0}\mathcal{B}(\tilde{v})&=-2\pa{\int_M v^{-\frac{4}{3}}dV_g}^{-1}\int_Mv^{-\frac{7}{3}}\phi\: dV_g\: \mathcal{B}(v)+\\
		&+\pa{\int_Mv^{-\frac{4}{3}}dV_g}^{\frac{3}{2}}\int_M \left[2v\phi\abs{\nabla \bach}_g^2-\dfrac{4}{3}v\phi\pa{\diver_g\operatorname{A}+\Delta\abs{\bach}_g^2+}\right.	\\
		&\left.-\dfrac{8}{9}\pa{\phi\diver_g(\operatorname{T}_v)+
			v\diver_g(\operatorname{T}_\phi)}-2\pa{
			\phi\diver_g\pa{\abs{\bach}_g^2\nabla v}+
			v\diver_g\pa{\abs{\bach}_g^2\nabla \phi}}
		\right]dV_g.
	\end{align*}
	Integrating by parts we deduce
	\begin{align*}
		\frac{d}{dt}|_{t=0}\mathcal{B}(\tilde{v})&=-2\pa{\int_M v^{-\frac{4}{3}}dV_g}^{-1}\int_Mv^{-\frac{7}{3}}\phi\: dV_g\: \mathcal{B}(v)+\\
		&+\pa{\int_Mv^{-\frac{4}{3}}dV_g}^{\frac{3}{2}}\int_M\Big[-\frac{4}{3}v\pa{\diver_g\operatorname{A}+
			\Delta\abs{\bach}^2_g-\frac{3}{2}\abs{\nabla \bach}^2_g}+\\
		&-\diver_g\pa{4\abs{\bach}^2_g\nabla v+\dfrac{16}{9}\operatorname{T}_v^{\#}}\Big]\phi\:dV_g.
	\end{align*}
	Imposing that $v$ is a critical point for the functional $v \mapsto \mathcal{B}(v)$, i.e.
	\[\frac{d}{dt}|_{t=0}\mathcal{B}(v+t\phi)=0\]
	for all $0<\phi \in C^{\infty}(M)$, we obtain the Euler-Lagrange equation
	\begin{equation} \label{pdetosolve}
		-\diver_g\pa{a\nabla v+\dfrac{4}{9}\operatorname{T}^{\#}_v}+cv=\dfrac{\lambda(v)}{v^{\frac{7}{3}}}
	\end{equation}
	where 
	\[
	a:=\abs{\bach}_g^2, \mbox{ } 
	c:=-\dfrac{1}{3}\pa{\diver_g\operatorname{A}+\Delta\abs{\bach}_g^2-
		\dfrac{3}{2}\abs{\nabla\bach}_g^2}, \mbox{ } 
	\lambda(v):=\frac{\mathcal{B}(v)}{2\pa{\int_M 
			v^{-\frac{4}{3}}dV_g}^{\frac{5}{2}}}.
	\]
	Hence, to prove the existence of a minimizer of $\mathcal{B}(g)$ is
	sufficient to show that \eqref{pdetosolve} admits a smooth, positive
	solution. 
	
	\section{Proof of Theorem \ref{weakbachparallel}}\label{section proof main thm}
	
	Let us define the second order differential operator
	\begin{equation} \label{bachoperator}
		Lv:=-\diver_g\pa{a\nabla v+\dfrac{4}{9}\operatorname{T}^{\#}_v}+cv,
	\end{equation}
	where $a$ and $c$ are as before and $\operatorname{T}_v$ is defined
	in \eqref{divbachsquare}. By the existence of Riemannian metrics with nowhere vanishing Bach tensor proven in \cite{cdmbach}, 
	we fix a Riemannian metric $g$ on $M$ such that 
	$\abs{\bach}_g\neq 0$ on $M$: since, locally, 
	$(\operatorname{T}^{\#}_v)^i=(B^2)^i_{j}v^j$, we have that the matrix of
	the highest order coefficients of \eqref{bachoperator} is
	given by
	\begin{equation} \label{ellipticmatrixcomp}
		G^i_{j}:=\abs{\bach}_g^2\delta^i_{j}+\dfrac{4}{9}(B^2)^i_{j}.
	\end{equation}
	Since the $(0,2)$-version of $\bach^2$ is symmetric, it is immediate to see that $L$ is self-adjoint; furthermore, by our choice of $[g]$, 
	the matrix $(B^2)^i_{j}$ is positive
	semidefinite, which means that $G^i_{j}$ is uniformly positive
	definite. Since the operator
	$L'v=-\diver_g(a\nabla v)$ is uniformly elliptic and $a>0$ on $M$, $L$ is also uniformly elliptic.
	
	Our strategy, based upon the argument in \cite{cmmp}, is to search
	for weak solutions of \eqref{pdetosolve}: namely, we define
	\[
	H(M)=\set{u\in H^1(M): \mbox{ } u>0 \mbox{ a.e.}, \int_Mu^{-\frac{4}{3}}dV_g<+\infty}
	\]
	and the minimum of the Bach functional in $H(M)$ as
	\[
	\mathfrak{B}=\inf_{u\in H(M)}\mathcal{B}(u).
	\]
	Once the existence of $v\in H(M)$ such that 
	$\mathcal{B}(v)=\mathfrak{B}$
	is established, 
	we want to show that
	$\mathcal{B}(v)=\ol{\mathfrak{B}}$, where
	\[
	\ol{\mathfrak{B}}=\inf_{\substack{u\in\cinf \\ u>0}}
	\mathcal{B}(u).
	\] 
	
	First, let us define the \emph{principal eigenvalue} of $L$ as
	\begin{equation} \label{firsteigen}
		\lambda_1=\inf_{\substack{u\in H^1(M) \\ u\not\equiv0}}
		\mathfrak{R}(u):=\inf_{\substack{u\in H^1(M) \\ u\not\equiv0}}
		\dfrac{\int_M uLu \, dV_g}{\int u^2dV_g},
	\end{equation}
	where $\mathfrak{R}(u)$ denotes the \emph{Rayleigh quotient}
	and
	\begin{equation} \label{uludef}
		\int_MuLu \, dV_g=\int_M\pa{a\abs{\nabla u}^2+
			\dfrac{4}{9}{\bach}^2(\nabla u,\nabla u)+cu^2}dV_g:
	\end{equation}
	it is well-known that $L$ admits a smooth, positive eigenfunction 
	$\phi_1$ relative to $\lambda_1$, i.e. such that 
	$L\phi_1=\lambda_1\phi_1$. 
	
	We point out that, \emph{a priori}, the zero-order coefficient 
	$c$ in \eqref{bachoperator} does not have a given sign: hence, we might not be able to conclude the
	non-negativity of the principal eigenvalue of $L$. 
	However, it is immediate to show that, by
	\eqref{bachfunctv} and \eqref{firsteigen}, for every $u\in H(M)$ we have  
	\begin{equation} \label{bachfunctrayleigh}
		\mathcal{B}(u)=2\pa{\int_M u^{-\frac{4}{3}}dV_g}^{\frac{3}{2}}
		\int_MuLu \, dV_g;
	\end{equation}
	this equality allows us to prove the following
	\begin{lemma} \label{pinchinglemma}
		The minimum $\mathfrak{B}$ satisfies
		\[
		2\mathrm{Vol}_g(M)^{\frac{5}{2}}\lambda_1\leq\mathfrak{B}\leq
		2\dfrac{\int_M\phi_1^2dV_g}{\pa{\int_M\phi_1^{-\frac{4}{3}}dV_g}^{-\frac{3}{2}}}\lambda_1.
		\]
	\end{lemma}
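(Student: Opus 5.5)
The upper bound comes from testing the identity \eqref{bachfunctrayleigh} on the principal eigenfunction $\phi_1$. The lower bound comes from combining the Rayleigh quotient \eqref{firsteigen} with a H\"older inequality relating $\int_M u^2$ and $\int_M u^{-4/3}$. The only subtle point is the sign of $\lambda_1$, which is not known a priori because $c$ has no sign. I will settle it first using the upper bound.

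\emph{Upper bound.} The principal eigenfunction $\phi_1$ is smooth and positive on the closed manifold $M$, so $\phi_1\in H(M)$. Since $L\phi_1=\lambda_1\phi_1$, the identity \eqref{bachfunctrayleigh} gives
\[
\mathfrak{B}\leq\mathcal{B}(\phi_1)=2\pa{\int_M\phi_1^{-\frac{4}{3}}dV_g}^{\frac{3}{2}}\lambda_1\int_M\phi_1^2dV_g,
\]
which is exactly the right-hand inequality of the statement.

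\emph{Sign of $\lambda_1$.} Because $\phi_1$ is a genuine smooth positive conformal factor, $\mathcal{B}(\phi_1)$ is the true value of the functional at the metric $\phi_1^{-2/3}g$. That value is $\mathrm{Vol}^{3/2}\int_M\abs{\nabla\tilde{\bach}}^2\,dV_{\tilde g}\geq0$. The prefactors in the display above are strictly positive, so $\lambda_1\geq 0$.

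\emph{Lower bound.} Take any $u\in H(M)$. By \eqref{bachfunctrayleigh} and the definition \eqref{firsteigen},
\[
\mathcal{B}(u)=2\pa{\int_M u^{-\frac{4}{3}}}^{\frac{3}{2}}\int_M uLu\geq 2\lambda_1\pa{\int_M u^{-\frac{4}{3}}}^{\frac{3}{2}}\int_M u^2 .
\]
Now write $1=(u^2)^{2/5}(u^{-4/3})^{3/5}$, which holds since $u>0$ a.e. Apply H\"older's inequality with exponents $p=5/2$ and $q=5/3$:
\[
\mathrm{Vol}_g(M)\leq\pa{\int_M u^2}^{\frac{2}{5}}\pa{\int_M u^{-\frac{4}{3}}}^{\frac{3}{5}} .
\]
Raising to the power $5/2$ gives $\pa{\int_M u^{-4/3}}^{3/2}\int_M u^2\geq \mathrm{Vol}_g(M)^{5/2}$. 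Since $\lambda_1\geq0$, multiplying by $2\lambda_1$ preserves the inequality, so $\mathcal{B}(u)\geq 2\mathrm{Vol}_g(M)^{5/2}\lambda_1$. Taking the infimum over $u\in H(M)$ yields the left-hand inequality.

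The main obstacle is the sign step. Without $\lambda_1\geq0$, the H\"older step would reverse direction and the lower bound would fail. One should therefore check that \eqref{bachfunctrayleigh}, obtained from \eqref{bachfunctv} by integration by parts, really agrees with the nonnegative geometric functional for smooth positive $u$. It does, since both are the same integral rearranged.
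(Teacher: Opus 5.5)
Your proof is correct, and its two ingredients are the paper's. The upper bound comes from testing \eqref{bachfunctrayleigh} on $\phi_1$. The lower bound comes from the Rayleigh quotient combined with $\mathrm{Vol}_g(M)^{5/2}\le \int_M u^2\,dV_g\,(\int_M u^{-4/3}\,dV_g)^{3/2}$; the paper obtains this from Jensen's inequality for $x\mapsto x^{-2/3}$ and you from H\"older with exponents $5/2$ and $5/3$, but it is the same inequality.

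The difference is in how you handle the sign. The paper writes the chain $\lambda_1\le \int_M uLu\,dV_g/\int_M u^2\,dV_g\le \frac12\mathcal{B}(u)\mathrm{Vol}_g(M)^{-5/2}$. Its second step tacitly requires $\int_M uLu\,dV_g\ge 0$, i.e.\ $\mathcal{B}\ge 0$ on all of $H(M)$. The paper justifies this only after the lemma, by approximating elements of $H(M)$ with smooth positive functions and using continuity of the quadratic form, and then obtains $\lambda_1\ge 0$ as Corollary \ref{eigenvpositive}.

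You reverse the order. You get $\lambda_1\ge 0$ directly from the nonnegativity of the geometric functional at the single smooth positive function $\phi_1$, and then multiply the H\"older inequality by $2\lambda_1$. This makes the lemma self-contained. It also gives $\int_M uLu\,dV_g\ge\lambda_1\int_M u^2\,dV_g\ge 0$ for every $u\in H^1(M)$, hence $\mathcal{B}\ge 0$ on $H(M)$ and $\mathfrak{B}\ge 0$, so the paper's density argument is not needed at all.
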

	\begin{proof}
		Let $V=\mathrm{Vol}_g(M)$ again; first, Jensen's inequality tells us that, for every 
		$u\in H(M)$,
		\[
		\pa{\int_M u^2dV_g}^{-\frac{2}{3}}\leq\int_M
		u^{-\frac{4}{3}}dV_g\cdot V^{-\frac{5}{3}},
		\]
		hence,
		\[
		\pa{\int_M u^2dV_g}^{-1}\leq\pa{\int_M
			u^{-\frac{4}{3}}dV_g}^{\frac{3}{2}}\cdot V^{-\frac{5}{2}}.
		\]
		By \eqref{firsteigen} and 
		\eqref{bachfunctrayleigh}, we obtain
		\begin{align*}
			\lambda_1&\leq\dfrac{\int_MuLu \, dV_g}{\int_Mu^2dV_g}\leq
			\dfrac{1}{2}\pa{\int_M
				u^{-\frac{4}{3}}dV_g}^{-\frac{3}{2}}\mathcal{B}(u)
			\pa{\int_M
				u^{-\frac{4}{3}}dV_g}^{\frac{3}{2}}\cdot V^{-\frac{5}{2}}=\\
			&=\dfrac{1}{2}\mathcal{B}(u)\cdot V^{-\frac{5}{2}}:
		\end{align*}
		since this holds for every $u\in H(M)$, it implies the first inequality in the statement. 
		
		On the other hand, by \eqref{bachfunctrayleigh} and the
		definition of $\mathfrak{B}$
		we easily deduce that
		\[
		\mathfrak{B}\leq\mathcal{B}(\phi_1)=2
		\mathfrak{R}(\phi_1)\dfrac{\int_M\phi_1^2dV_g}{\pa{\int_M
				\phi_1^{-\frac{4}{3}}dV_g}^{-\frac{3}{2}}}
		\]
		and, since $\mathfrak{R}(\phi_1)=\lambda_1$, it concludes the
		proof.
	\end{proof}
	
	By definition \eqref{bachfunct}, we know that
	$\mathcal{B}(v)\geq 0$ for every smooth, positive
	function $v$. We rewrite \eqref{bachfunctv} as
	\[
	\mathcal{B}(v)=\pa{\int_Mv^{-\frac{4}{3}}dV_g}^\frac{3}{2}\int_M2\sq{\langle G\nabla v, \nabla v\rangle+ cv^2}dV_g,
	\]
	for every $v\in H(M)$, where the components of $G$ are defined in \eqref{ellipticmatrixcomp}.
	Since both $G$ and $c$ are smooth, they are also bounded on $M$: hence, by the fact that 
	$v$ and $\nabla v$ are in $L^2(M)$, we obtain
	\[
	\abs{\int_M\sq{\langle G\nabla v, \nabla v\rangle+ cv^2}dV_g}\leq C\norm{G}_{\infty}\norm{c}_{\infty}\norm{v}_{L^2(M)}^2\norm{\nabla v}_{L^2(M)}^2<+\infty
	\]
	for every $v\in H^1(M)$. Hence, 
	\[
	I(v)\coloneqq\int_M\sq{\langle G\nabla v, \nabla v\rangle+ bv^2}dV_g
	\]
	is a continuous quadratic form on 
	$H^1(M)$, meaning that, for every sequence 
	$\{v_n\}_n\subset H^1(M)$ converging to 
	$v\in H^1(M)$, 
	\[
	\lim_{n\to+\infty}I(v_n)=I(v).
	\]
	By a standard density argument, we can define a sequence of positive, smooth functions $v_n$ 
	such that $v_n\to v$ in $H^1(M)$ 
	(for instance, using heat semigroups, see e.g.
	\cite{grigbook, grigoryan, rosenberg}) and exploit the continuity of $I$ to show that 
	\[
	I(v)=\lim_{n\to +\infty}I(v_n)\geq 0.
	\]
	Furthermore, since $v\in H(M)$, 
	\[
	0<\int_Mv^{-\frac{4}{3}}dV_g<+\infty,
	\]
	which means that 
	\[
	\mathcal{B}(v)=\int_Mv^{-\frac{4}{3}}dV_g \cdot I(v)\geq 0
	\]
	for every $v\in H(M)$ and, in particular, 
	$\mathfrak{B}\geq 0$. 
	Using this fact, we obtain the following consequence
	of Lemma \ref{pinchinglemma}:
	\begin{cor} \label{eigenvpositive}
		The principal eigenvalue $\lambda_1$ of $L$ is non-negative. 
	\end{cor}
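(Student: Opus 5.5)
The corollary follows directly from the first inequality of Lemma \ref{pinchinglemma} combined with $\mathfrak{B}\geq 0$, established just above. The plan is to argue by contradiction, although the direct chain of inequalities works equally well.

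\textbf{Step 1.} Recall the bound $\mathcal{B}(u)\geq 0$ for every $u\in H(M)$. It was obtained by approximating $u$ in $H^1(M)$ by smooth positive functions $v_n$. For each such $v_n$, the quantity $\mathcal{B}(v_n)$ is the functional \eqref{bachfunct} evaluated on the genuine metric $v_n^{-2/3}g$, so it is non-negative. Since $\int_M v_n^{-4/3}dV_g>0$, this gives $I(v_n)\geq 0$. Continuity of the quadratic form $I$ on $H^1(M)$ then passes the inequality to the limit. Taking the infimum over $H(M)$ yields $\mathfrak{B}\geq 0$.

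\textbf{Step 2.} Lemma \ref{pinchinglemma} gives
\[
2\,\mathrm{Vol}_g(M)^{\frac{5}{2}}\lambda_1\leq \mathfrak{B}.
\]
It therefore suffices to compare the two sides. I will nonetheless spell out the underlying pointwise inequality, so that no issue arises about whether $\mathfrak{B}$ is attained. For any $u\in H(M)$, the proof of the lemma shows
\[
\lambda_1\leq \mathfrak{R}(u)\leq \tfrac12\,\mathcal{B}(u)\,V^{-5/2}.
\]
This step uses Jensen's inequality and is legitimate only when $\int_M uLu\,dV_g\geq 0$; otherwise multiplying by the Jensen bound reverses the inequality. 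That sign condition is exactly what Step 1 provides, since $\int_M uLu\,dV_g=\tfrac12 I(u)\geq 0$.

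\textbf{Step 3.} Suppose $\lambda_1<0$. The positive eigenfunction $\phi_1$ is smooth and bounded below by a positive constant, so $\phi_1\in H(M)$. By \eqref{bachfunctrayleigh},
\[
\mathcal{B}(\phi_1)=2\pa{\int_M\phi_1^{-4/3}dV_g}^{3/2}\lambda_1\int_M\phi_1^2\,dV_g<0.
\]
This contradicts $\mathcal{B}\geq 0$ on $H(M)$, and more directly contradicts the fact that $\phi_1^{-2/3}g$ is a genuine smooth metric, whose $\mathcal{B}$-value is non-negative by definition. Hence $\lambda_1\geq 0$.

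The only delicate point is the sign issue in the Jensen step. Step 3 sidesteps it entirely, so I would present Step 3 as the proof. I do not expect any real obstacle here.
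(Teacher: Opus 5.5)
Your Step 3 is correct and is essentially the paper's argument. The corollary is meant to follow from $\mathfrak{B}\geq 0$ together with the \emph{second} inequality of Lemma \ref{pinchinglemma}, namely
$\mathfrak{B}\leq\mathcal{B}(\phi_1)=2\pa{\int_M\phi_1^{-4/3}dV_g}^{3/2}\lambda_1\int_M\phi_1^2\,dV_g$.
You bypass $\mathfrak{B}$ and use $\mathcal{B}(\phi_1)\geq 0$ directly. As you note, this is immediate because $\phi_1^{-2/3}g$ is a genuine smooth metric, so the density argument is not even needed.

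Your opening sentence and Step 2, however, rely on the wrong inequality. The first inequality $2\,\mathrm{Vol}_g(M)^{5/2}\lambda_1\leq\mathfrak{B}$ is an \emph{upper} bound on $\lambda_1$. Combined with $\mathfrak{B}\geq 0$, it says nothing about the sign of $\lambda_1$, so "comparing the two sides" proves nothing. Drop that framing and present Step 3 alone.
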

	Now, in view of Lemma \ref{pinchinglemma}, if $\mathfrak{B}=0$ the
	proof of Theorem \ref{weakbachparallel} is trivial: indeed, 
	this would imply that $\lambda_1=0$, which means that, 
	by \eqref{bachfunctrayleigh},
	\[
	\mathcal{B}(\phi_1)=
	2\pa{\int_M\phi_1^{-\frac{4}{3}}dV_g}^{\frac{3}{2}}
	\int_M\phi_1L\phi_1dV_g=0=\mathfrak{B}.
	\]
	Hence, $\phi_1$ attains the minimum of $\mathcal{B}$ in the
	given conformal class.
	
	Now, let $\mathfrak{B}>0$, which, by Lemma \ref{pinchinglemma} and Corollary \ref{eigenvpositive}, 
	implies that $\lambda_1>0$: we divide the proof in steps, for
	the sake of simplicity. 
	
	\addtocontents{toc}{\protect\setcounter{tocdepth}{1}}
	
	\subsection*{Step 1.}
	
	Let us consider a sequence of functions 
	$\{v_n\}_{n\in\NN}\subset H(M)$ such that $\mathcal{B}(v_n)\to
	\mathfrak{B}$ as $n\to +\infty$; 
	exploiting the scale-invariance of $\mathcal{B}$, we can always
	assume that $\int_M v_n^{-\frac{4}{3}}dV_g=1$ for every $n$.
	We can immediately observe that there exists $N\in \NN$
	\[
	0<\mathfrak{B}\leq\mathcal{B}(v_n)\leq \mathfrak{B}+1,
	\]
	for every $n\geq N$. 
	By \eqref{firsteigen} and \eqref{uludef}, 
	it is clear that
	\begin{align*}
		\lambda_1||v_n||_{L^2(M)}^2&\leq \int_M\pa{a\abs{\nabla v_n}^2+\dfrac{4}{9}\bach^2(\nabla v_n,\nabla v_n)
			+cv^2_n}dV_g=\\
		&=\pa{\int_Mv_n^{-\frac{4}{3}}dV_g}^{\frac{3}{2}}
		\int_M\pa{a\abs{\nabla v_n}^2+
			\dfrac{4}{9}\bach^2(\nabla v_n,\nabla v_n)+cv^2_n}dV_g=
		\mathcal{B}(v_n)\leq \mathfrak{B}+1:
	\end{align*}
	this means that $\{v_n\}_{n\in \NN}$ is bounded in $L^2(M)$.
	Now we need to prove that $\{\nabla v_n\}_{n\in\NN}$ is bounded
	in $L^2(M)$, which, in turn, would imply that $\{v_n\}_{n\in\NN}$
	is bounded in $H^1(M)$. By the fact that $a\neq 0$ and that 
	$\bach^2$ is positive semidefinite when regarded as a quadratic 
	form, we have that there exists $N\in \NN$ such that, for every
	$n\geq N$, 
	\begin{align*}
		\sq{\min_{p\in M}a(p)}\int_M\abs{\nabla v_n}^2dV_g&\leq
		\int_Ma\abs{\nabla v_n}^2dV_g=\mathcal{B}(v_n)-
		\int_M\pa{\dfrac{4}{9}B^2(\nabla v_n,\nabla v_n)+cv_n^2}dV_g\leq\\
		&\leq \mathcal{B}(v_n)-\int_Mcv^2_ndV_g\leq
		\mathcal{B}(v_n)+||c||_{L^{\infty}(M)}||v_n||_{L^2(M)}^2\\
		&\leq
		\mathfrak{B}+1+||c||_{L^{\infty}(M)}
		\dfrac{\mathfrak{B}+1}{\lambda_1}
	\end{align*}
	and the claim is proven (the last inequality is derived by the
	upper bound on $\lambda_1||v_n||_{L^2(M)}^2$).
	Hence, the boundedness of the sequence in $H(M)$ implies that,
	up to passing to a subsequence, $\{v_n\}_{n\in \NN}$ weakly
	converges in $H(M)$, i.e. there exists $v\in H(M)$ such that
	\[
	v_n\rightharpoonup v \quad \mbox{in } H^1(M),
	\]
	which, by compactness of $H^1(M)\hookrightarrow L^2(M)$, implies 
	that, up to subsequences, 
	\[
	v_n\longrightarrow v \quad \mbox{in } L^2(M), \quad 
	v_n\longra v \quad \mbox{a.e. on } M.
	\]
	In fact, we show that the convergence is in $H(M)$: indeed, 
	Fatou's
	Lemma implies that
	\[
	\int_Mv^{-\frac{4}{3}}dV_g\leq\liminf_{n\to+\infty}
	\int_Mv_n^{-\frac{4}{3}}dV_g=1<+\infty,
	\]
	since $v_n^{-\frac{4}{3}}\to v^{-\frac{4}{3}}$ a.e. on $M$, and
	from this we can also deduce that $v>0$ a.e. on $M$, which means
	that $v\in H(M)$. 
	
	\addtocontents{toc}{\protect\setcounter{tocdepth}{2}}
	
	\addtocontents{toc}{\protect\setcounter{tocdepth}{1}}
	\subsection*{Step 2.}
	Now, we need to show that $v$ is in fact a weak solution of
	\eqref{pdetosolve}, i.e. that $\mathcal{B}(v)=\mathfrak{B}$. 
	By \eqref{uludef}, \eqref{bachfunctv} and the estimate for the
	integral of $v^{-\frac{4}{3}}$ we have
	\begin{align*}
		\mathfrak{B}\leq\mathcal{B}(v)&=2\pa{\int_M
			v^{-\frac{4}{3}}dV_g}^{\frac{3}{2}}\int_M
		\pa{a\abs{\nabla v}^2+
			\dfrac{4}{9}B^2(\nabla v,\nabla v)+cv^2}dV_g\leq\\
		&\leq \int_M
		2\pa{a\abs{\nabla v}^2+\dfrac{4}{9}B^2(\nabla v,\nabla v)+cv^2}dV_g.
	\end{align*}
	By exploiting the weak convergence in $H^1(M)$ and the convergence
	in $L^2(M)$, we can show that
	\[
	\mathfrak{B}\leq2\liminf_{n\to+\infty}\int_M
	\pa{a\abs{\nabla v_n}^2+
		\dfrac{4}{9}B^2(\nabla v_n,\nabla v_n)+cv_n^2}dV_g=
	\liminf_{n\to+\infty}\mathcal{B}(v_n)=\mathfrak{B},
	\]
	which means that 
	\[
	\mathcal{B}(v)=\mathfrak{B}.
	\] 
	\addtocontents{toc}{\protect\setcounter{tocdepth}{2}}
	\addtocontents{toc}{\protect\setcounter{tocdepth}{1}}
	\subsection*{Step 3.} Finally, we need to prove that $v$ is a weak solution of \eqref{pdetosolve} and that it is actually
	a smooth positive function, which would imply
	\[
	\mathfrak{B}=\ol{\mathfrak{B}}=\mathcal{B}(v)
	\]
	and, therefore, would guarantee the existence of the conformal 
	factor. We observe that, \emph{a priori}, 
	$v^{-1}\in L^{\frac{4}{3}}(M)\setminus L^{\frac{7}{3}}(M)$: therefore, we need to 
	establish a better regularity for $v$ in order to
	proceed. 
	
	More precisely, we can prove that 
	$v$ is a weak supersolution of the PDE $Lu=0$:
	i.e. we show the following
	\begin{proposition} \label{weaksupersolution}
		Let $v\in H(M)$ such that $\mathcal{B}(v)=\mathfrak{B}>0$. Then, 
		$Lv\geq 0$ in the weak sense, i.e., for every $\phi\in C^1(M)$
		such that $\phi\geq 0$, 
		\[
		\int_M vL\phi\, dV_g=\int_M \left(\langle G\nabla v, \nabla \phi\rangle + cv\phi\right)dV_g \geq 0.
		\]
	\end{proposition}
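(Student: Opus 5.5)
The idea, following \cite{cmmp}, is to perturb the minimizer upward: for $\phi\in C^1(M)$ with $\phi\geq 0$ and $t>0$, compare $\mathcal{B}(v)$ with $\mathcal{B}(v+t\phi)$. Since $v>0$ a.e. and $\phi\geq0$, we have $v+t\phi\geq v>0$ a.e., hence $(v+t\phi)^{-4/3}\leq v^{-4/3}$. Thus $v+t\phi\in H(M)$, and by minimality $\mathcal{B}(v+t\phi)\geq\mathfrak{B}=\mathcal{B}(v)$. Working only with non-negative perturbations avoids the singular term $v^{-7/3}$, which need not be integrable. This is exactly why we obtain an inequality (a supersolution) rather than the full equation.

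By \eqref{bachfunctrayleigh},
\[
\mathcal{B}(u)=2\pa{\int_M u^{-\frac{4}{3}}dV_g}^{\frac{3}{2}} I(u), \qquad I(u)=\int_M\pa{\langle G\nabla u,\nabla u\rangle+cu^2}dV_g .
\]
Write $F(t)=\int_M (v+t\phi)^{-4/3}dV_g$. Then $F$ is non-increasing in $t$, $0<F(t)\leq F(0)<+\infty$, and $F(t)>0$ because $v+t\phi\in L^2$ is finite a.e. Expanding the quadratic form,
\[
I(v+t\phi)=I(v)+2t\int_M\pa{\langle G\nabla v,\nabla\phi\rangle+cv\phi}dV_g+t^2I(\phi).
\]
From $\mathcal{B}(v+t\phi)\geq\mathcal{B}(v)$ we get
\[
F(t)^{3/2}\bigl[I(v)+2tJ+t^2I(\phi)\bigr]\geq F(0)^{3/2}I(v), \qquad J:=\int_M\pa{\langle G\nabla v,\nabla\phi\rangle+cv\phi}dV_g .
\]
Since $\mathfrak{B}>0$, we have $I(v)>0$. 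Using $F(t)\leq F(0)$ then gives
\[
F(0)^{3/2}\bigl[I(v)+2tJ+t^2I(\phi)\bigr]\geq F(t)^{3/2}\bigl[I(v)+2tJ+t^2I(\phi)\bigr]\geq F(0)^{3/2}I(v).
\]
The first inequality requires the bracket to be non-negative. For small $t$ this holds, since the bracket tends to $I(v)>0$; otherwise the second inequality would already fail, because the left side would be negative against a positive right side. Dividing by $F(0)^{3/2}>0$ yields $2tJ+t^2I(\phi)\geq 0$. Dividing by $t>0$ and letting $t\to0^+$ gives $J\geq 0$, which is the claimed weak inequality $\int_M vL\phi\,dV_g\geq0$. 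The identity $\int_M vL\phi=J$ for $\phi\in C^1$ follows from self-adjointness of $L$ and integration by parts against $v\in H^1$.

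The main point to check carefully is the sign bookkeeping: the volume factor decreases under upward perturbations, and it works in our favour only because $I(v)>0$. This is precisely where the hypothesis $\mathfrak{B}>0$ enters. Expanding $F(t)$ differentiably would instead require $v^{-7/3}\phi\in L^1$, which is not known a priori. The monotonicity argument above bypasses that difficulty, so no differentiation of $F$ is needed.
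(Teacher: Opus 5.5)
Your proof is correct and follows essentially the same route as the paper. Both arguments perturb the minimizer upward by $t\phi$, observe that $v+t\phi\in H(M)$ and that the volume factor $\int_M(v+t\phi)^{-4/3}dV_g$ can only decrease, and then extract the first-order term $2t\int_M\left(\langle G\nabla v,\nabla\phi\rangle+cv\phi\right)dV_g$ of the quadratic form. The only difference is cosmetic: the paper argues by contradiction and discards the volume term using $Q\geq 0$ (from $\lambda_1\geq 0$), while you argue directly using $I(v)>0$ and small $t$ (in fact $I(v)\geq 0$ already gives $F(0)^{3/2}I(v)\geq F(t)^{3/2}I(v)$, so $\mathfrak{B}>0$ is not really needed at this step).
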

	\begin{proof}
		Assume that there exists $\phi$ such that 
		$\int_M vL\phi \, dV_g<0$ and define, for every $u\in H^1(M)$,
		\[
		Q(u)\coloneqq\int_M uLu \, dV_g.
		\]
		Arguing as in Step 1., we know that 
		$Q(u)\geq\lambda_1 ||u||_{L^2(M)}^2$: for 
		$t>0$ small enough we consider
		the increment
		\begin{align*}
			&\mathcal{B}(v+t\phi)-\mathcal{B}(v)=\\
			&=\sq{\pa{\int_M\pa{v+t\phi}^{-\frac{4}{3}}dV_g}^{\frac{3}{2}}-
				\pa{\int_Mv^{-\frac{4}{3}}dV_g}^{\frac{3}{2}}}Q(v+t\phi)+
			\pa{\int_Mv^{-\frac{4}{3}}dV_g}^{\frac{3}{2}}[Q(v+t\phi)-Q(v)].
		\end{align*}
		Since $t>0$ and $Q(u)\geq 0$ for every $u\in H^1(M)$, we have
		\[
		\mathcal{B}(v+t\phi)-\mathcal{B}(v)\leq 
		\pa{\int_Mv^{-\frac{4}{3}}dV_g}^{\frac{3}{2}}[Q(v+t\phi)-Q(v)].
		\]
		Expanding the term in square brackets, as 
		$t\to 0$ we get
		\[
		Q(v+t\phi)-Q(v)=Q'(v)[\phi]t+o(t),
		\]
		where, by the variational computations in Section 
		\ref{functionalsection},
		\[
		Q'(v)[\phi]=2\int_M vL\phi \, dV_g,
		\]
		which is negative by our assumption. Hence, for
		sufficiently small $t>0$,
		\[
		\mathcal{B}(v+t\phi)-\mathcal{B}(v)\leq
		\pa{\int_Mv^{-\frac{4}{3}}dV_g}^{\frac{3}{2}}Q'(v)[\phi]t+o(t)<0,
		\]
		i.e. $\mathcal{B}(v+t\phi)<\mathcal{B}(v)$. However, this is 
		a contradiction: indeed, since $t>0$ and $\phi\geq 0$, 
		$v+t\phi\in H(M)$, hence we would have that there exists 
		$u\in H(M)$ such that
		\[
		\mathcal{B}(u)<\mathfrak{B}. 
		\]
	\end{proof}
	\noindent
	Now, we can exploit the following version of the strong maximum
	principle:
	\begin{proposition}\label{p-maxprinc}
		Let $u\in H^1(M)$ such that $Lu\geq 0$ in the weak sense, with
		$\lambda_1>0$. Then, either $u=0$ a.e. on $M$ or 
		$\operatorname*{ess\,inf}_Mu>0$.	
	\end{proposition}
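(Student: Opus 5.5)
The strategy is to reduce to an operator with nonnegative zero-order term via a ground-state transform, and then apply the weak Harnack inequality. Since $\lambda_1>0$ and $L$ is uniformly elliptic and self-adjoint with smooth coefficients, the principal eigenfunction $\phi_1$ is smooth and satisfies $\min_M\phi_1>0$. For $u\in H^1(M)$ with $Lu\ge 0$ weakly, I would write $u=\phi_1 w$ with $w=u/\phi_1\in H^1(M)$. A direct computation in the weak formulation, testing with $\phi_1\psi$ for $\psi\ge0$ in $C^1(M)$, gives
\[
\int_M \phi_1^2\langle G\nabla w,\nabla\psi\rangle\,dV_g+\lambda_1\int_M\phi_1^2 w\psi\,dV_g\;\ge\;0 .
\]
Here the term coming from $L\phi_1=\lambda_1\phi_1$ absorbs the sign-indefinite coefficient $c$. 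Thus $w$ is a weak supersolution of $\tilde L w:=-\diver_g(\phi_1^2G\nabla w)+\lambda_1\phi_1^2 w\ge 0$. This is a uniformly elliptic divergence-form operator with bounded measurable (indeed smooth) coefficients and a \emph{nonnegative} zero-order term. By density the test class extends to $\psi\in H^1(M)$, $\psi\ge0$.

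Next I would show $w\ge0$ a.e. Testing the inequality with $\psi=w^-=\max(-w,0)\in H^1(M)$ gives
\[
-\int_M\phi_1^2\langle G\nabla w^-,\nabla w^-\rangle-\lambda_1\int_M\phi_1^2 (w^-)^2\ge 0 .
\]
Since $G$ is positive definite and $\lambda_1>0$, this forces $w^-=0$. Hence $u\ge 0$ a.e. In our application $u=v>0$ a.e. already, but the general statement needs this step.

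The main step is the strong minimum principle for the nonnegative supersolution $w$. I would use the weak Harnack inequality of De Giorgi--Nash--Moser type (Trudinger; Gilbarg--Trudinger, Theorem 8.18) in coordinate balls. For a nonnegative weak supersolution of $\tilde L w\ge0$ on $B_{4r}\subset M$ and some $p\in(0,2)$,
\[
r^{-4/p}\|w\|_{L^p(B_{2r})}\le C\,\operatorname*{ess\,inf}_{B_r}w .
\]
This holds because the zero-order coefficient is bounded and nonnegative, and the principal coefficients are uniformly elliptic after writing $\tilde L$ in local coordinates with the volume density. Consequently, if $w$ is not a.e.\ zero on some ball, its essential infimum there is positive. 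The set $\Omega=\{x:\operatorname*{ess\,inf}_{B_r(x)}w>0 \text{ for some } r\}$ is open. Its complement is also open, since the Harnack inequality shows that where $w$ vanishes on a set of positive measure in small balls it vanishes a.e. nearby. By connectedness of $M$ (implicit, since the principal eigenfunction is unique up to scaling), either $w=0$ a.e. or $\Omega=M$. In the latter case, compactness gives a finite cover and $\operatorname*{ess\,inf}_Mw>0$. Multiplying by $\phi_1$, whose minimum is positive, yields the dichotomy for $u$.

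The step I expect to need the most care is this application of the weak Harnack inequality on the manifold: checking uniform constants in finitely many charts and the open/closed argument. The transform itself is routine once the weak formulation is written out.
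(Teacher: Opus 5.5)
Your argument is correct and follows the same structure as the paper's sketch: $\lambda_1>0$ kills the negative part, and the weak Harnack inequality on a finite cover of the (implicitly connected) manifold $M$ gives the dichotomy. One phrase should be tightened: openness of $M\setminus\Omega$ follows because $x\notin\Omega$ means $\operatorname*{ess\,inf}_{B_r(x)}w=0$, and weak Harnack then forces $w=0$ a.e.\ on $B_{2r}(x)$. Your ground-state transform $u=\phi_1 w$ is harmless but not needed. The paper's route tests $Lu\geq 0$ directly with $u^-$, giving $\lambda_1\norm{u^-}_{L^2(M)}^2\leq Q(u^-)=\int_M u^-Lu^-\,dV_g\leq 0$. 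Moreover, the weak Harnack inequality (Gilbarg--Trudinger, Theorem 8.18) requires only bounded lower-order coefficients, not a sign on $c$.
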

	\begin{rem}
		We do not provide a proof of Proposition \ref{p-maxprinc}, since it is a standard argument: writing $u$ as the sum of its positive and negative parts, the assumption $\lambda_1>0$ forces $u^-$ to be zero almost everywhere, so that $u$ is non-negative almost everywhere. Then, either $u=0$ almost everywhere, or, as a consequence of the weak Harnack inequality applied to a finite cover of $M$, $\operatorname*{ess\,inf}_Mu>0$. For more details we refer the reader to \cite{giltr, moser}.
	\end{rem}
	Therefore, since $v>0$ a.e., by Proposition \ref{weaksupersolution} we deduce that 
	$\operatorname*{ess\,inf}_Mv>0$: 
	in particular, $v^{-1}\in L^{\infty}(M)$ and, hence, 
	$v^{-1}\in L^p(M)$ for every $p>0$. 
	Since $v$ is such that $\mathcal{B}(v)=\mathfrak{B}$, by the G\^{a}teaux differentiability of 
	$\mathcal{B}$ at $v$, we have that, for every $\phi\in C^1(M)$, 
	\[
	\dfrac{d}{dt}\restrict{t=0}\mathcal{B}(v+t\phi)=0,
	\] 
	i.e. $v$ is a weak solution of 
	\[
	Lv=\dfrac{\mathfrak{B}}{2\left(\int_M v^{-\frac{4}{3}}dV_g\right)^{\frac{3}{2}}}v^{-\frac{7}{3}}=\lambda(v)v^{-\frac{7}{3}},
	\]
	which is exactly \eqref{pdetosolve}.

	Since $\lambda(v)<+\infty$ and $v^{-1}\in L^{\infty}$, the right-hand side of \eqref{pdetosolve} must be a $L^{\infty}(M)$ function: therefore, a standard bootstrapping argument using elliptic regularity theory assures that
	$v$ is smooth and positive. In particular, 
	the weak solution found in Step 3 is smooth and positive everywhere, which means that 
	there exists a critical point $\tilde{g}\in [g]$ 
	of \eqref{bachfunct}.
	\addtocontents{toc}{\protect\setcounter{tocdepth}{2}}
	\addtocontents{toc}{\protect\setcounter{tocdepth}{1}}
	\subsection*{Step 4.} Now, we need to prove that, in the chosen
	conformal class $[g]$, the minimizer $\tilde{g}$ 
	identified by $v$ is unique,
	up to multiplication by a positive constant: indeed,
	by the scaling invariance of \eqref{bachfunct}, we cannot expect to obtain a unique solution of 
	\eqref{pdetosolve} in $[g]$. 
	
	First, we highlight the fact that every weak solution 
	of \eqref{pdetosolve} in $H(M)$ must be smooth and 
	positive on $M$. Indeed, let $u$ be such a weak solution, i.e. 
	\[
	\int_M(\langle G\nabla v, \nabla\phi\rangle+
	cv^2)dV_g=\lambda(v)\int_M\phi v^{-\frac{7}{3}}dV_g
	\]
	for every $\phi\in C^1(M)$, $\phi\geq 0$:
	note that, since the left-hand side and $\lambda(v)$ are bounded, we immediately obtain that $\phi v^{-\frac{7}{3}}\in L^1(M)$. 
	In particular, since $\phi\geq 0$ and $v>0$ a.e.,
	we immediately obtain that $Lv\geq 0$ in the weak sense: thus, by Proposition \ref{weaksupersolution} and Proposition \ref{p-maxprinc}, we can apply the
	same argument we performed at the end of Step 3 to conclude that $v$ is positive and smooth. 
	
	By these considerations,
	suppose that $u_1$ and $u_2$ are two smooth, positive solutions of \eqref{pdetosolve}:
	our claim is that there exists $\beta>0$ such that
	\[
	u_1=\beta u_2\quad\text{in } M.
	\]	
	First, we observe that the claim is trivial if
	$\mathcal{B}(u_1)=\mathcal{B}(u_2)=0$: indeed,
	this would imply that $\ol{\mathfrak{B}}=0$ and, by Lemma \ref{pinchinglemma}, that $\lambda_1=0$. By simplicity of the principal eigenvalue, that 
	there exist $\beta_1,\beta_2>0$ such that
	\[
	u_i=\beta_i\phi_1,
	\]
	where $\phi_1$ is a positive, smooth eigenfunction
	of $L$ associated to $\lambda_1$. 
	
	Then, we claim that, if $\mathcal{B}(u_1)>0$,
	then $\mathcal{B}(u_2)>0$. Indeed, assume that
	$\mathcal{B}(u_2)=0$ and define
	\[
	\psi:=\mu u_1, \qquad \mu:=
	\pa{\int_Mu_1^{-\frac{4}{3}}dV_g}^{\frac{3}{4}}
	\pa{\dfrac{2}{\mathcal{B}(u_1)}}^{\frac{3}{10}}:
	\]
	by the definition of $\lambda(v)$ we obtain
	\[
	L\psi=\psi^{-\frac{7}{3}}.
	\]
	We can choose $\alpha>0$ such that 
	\[
	f:=\psi-\alpha u_2\geq 0, \quad 
	\min_M f=0;
	\]
	by continuity of $f$, there exists a minimum point $p_0$ for $f$, at which
	\[
	\psi(p_0)=\alpha u_2(p_0).
	\]
	At $p_0$, we have
	\[
	f(p_0)=\nabla f (p_0)=0, \quad (\hess f)(p_0)\geq 0
	\]
	and, by \eqref{bachoperator}, 
	\[
	Lf=-a\Delta f-\dfrac{4}{9}\langle\bach^2,\hess f\rangle;
	\]
	hence, we immediately obtain 
	\[
	0\geq -a\Delta f - \dfrac{4}{9}\langle\bach^2,\hess f\rangle=Lf=L(\psi-\alpha u_2)=L\psi=\psi^{-\frac{7}{3}}>0
	\]
	at $p_0$, which is impossible. 
	
	Hence, we assume that $\mathcal{B}(u_1), \mathcal{B}(u_2)>0$: we define $\psi$ as before 
	and $\varphi$ as 
	\[
	\varphi:=\nu u_2, \quad \nu:=\pa{\int_Mu_2^{-\frac{4}{3}}dV_g}^{\frac{3}{4}}
	\pa{\dfrac{2}{\mathcal{B}(u_2)}}^{\frac{3}{10}}.
	\]
	Again, let $\alpha>0$ such that 
	\[
	f:=\psi-\alpha\varphi\geq 0, \quad 
	\min_{M}f=0:
	\]
	by definition of $\psi$ and $\varphi$, we obtain
	\[
	\begin{cases}
		L\psi=\psi^{-\frac{7}{3}}\\
		L\varphi=\varphi^{-\frac{7}{3}}
	\end{cases}
	\Longra Lf=\psi^{-\frac{7}{3}}-\alpha\varphi^{-\frac{7}{3}}.
	\]
	If $p_0$ is the mininum point of $f$, 
	we get 
	\[
	0\geq \psi^{-\frac{7}{3}}(p_0)
	-\alpha\varphi^{-\frac{7}{3}}(p_0)=
	\dfrac{1-\alpha^{\frac{10}{3}}}{\alpha^{\frac{7}{3}}
		\varphi^{\frac{7}{3}}(p_0)} \Longra \alpha\geq 1,
	\]
	which implies that 
	\[
	\psi\geq \alpha\varphi\geq \varphi.
	\]
	Now, it is sufficient to exploit the same argument
	exchanging the roles of $\psi$ and $\varphi$ in order 
	to obtain
	\[
	\psi=\varphi \Longra u_1=\beta u_2, \quad 
	\beta:=\dfrac{\nu}{\mu}>0
	\]
	and this proves the claim. 
	
	Recall that, in the previous steps, we showed that
	there exists a smooth, positive solution $v$ of 
	\eqref{pdetosolve} which attains the infimum 
	$\ol{\mathfrak{B}}$: by the uniqueness result and
	the fact that every solution $v\in H(M)$ of 
	\eqref{pdetosolve} is smooth and positive, we can
	state the following
	\begin{proposition}
		Every critical point of the functional 
		$v\longmapsto\mathcal{B}(v)$, where $v\in H(M)$, 
		is a minimum point. In particular, in every 
		conformal class $[g]$ such that $\bach_g\neq 0$ on $M$, there exists a unique minimizer $\tilde{g}$ of $\mathcal{B}(g)$ for conformal variations. 
	\end{proposition}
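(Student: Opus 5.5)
The plan is to assemble the statement from the steps already carried out in this section, with no new estimate required.

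First, let $v\in H(M)$ be a critical point of $v\longmapsto\mathcal{B}(v)$. By this I mean that the G\^{a}teaux derivative $\frac{d}{dt}\restrict{t=0}\mathcal{B}(v+t\phi)$ vanishes for every $\phi\in C^1(M)$. By the variational computation of Section \ref{functionalsection}, this says exactly that $v$ is a weak solution of \eqref{pdetosolve}. By the first observation of Step 4, every weak solution in $H(M)$ is smooth and positive. The reasoning is as follows. Testing against $\phi\geq 0$ gives $Lv\geq 0$ weakly, since $\lambda(v)\geq 0$ and $v>0$ a.e. When $\lambda_1>0$, Proposition \ref{p-maxprinc} yields $\operatorname*{ess\,inf}_M v>0$. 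The right-hand side $\lambda(v)v^{-7/3}$ is then bounded, and elliptic bootstrapping for the uniformly elliptic operator $L$ gives smoothness. In the degenerate case $\lambda_1=0$, I would argue directly: by Lemma \ref{pinchinglemma}, $\mathfrak{B}=0$. Testing the equation with $\phi=\phi_1$ and using self-adjointness gives $0=\lambda_1\int v\phi_1=\lambda(v)\int\phi_1 v^{-7/3}$, so $\lambda(v)=0$. Then $Lv=0$, and $v$ is a multiple of $\phi_1$ by simplicity of the principal eigenvalue.

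Next, I compare any such critical point $v$ with the minimizer $v_0$ constructed in Steps 1--3. That minimizer is smooth, positive, solves \eqref{pdetosolve}, and satisfies $\mathcal{B}(v_0)=\mathfrak{B}=\ol{\mathfrak{B}}$. The uniqueness argument of Step 4, a comparison at a minimum point of $\psi-\alpha\varphi$ after normalizing both solutions to solve $L\psi=\psi^{-7/3}$, gives $v=\beta v_0$ for some $\beta>0$. The case $\mathcal{B}=0$ is handled through $\phi_1$, and the mixed case is excluded as shown there. By scale invariance of $\mathcal{B}$, $\mathcal{B}(v)=\mathcal{B}(v_0)=\mathfrak{B}$, so every critical point is a minimum point.

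Finally, for the statement on conformal classes, fix $g$ with $\bach_g\neq 0$ everywhere, so that $L$ is uniformly elliptic as shown at the start of the section. Existence of a minimizer $\tilde g=v_0^{-2/3}g$ follows from Steps 1--3. If $\tilde g_1=v_1^{-2/3}g$ is another minimizer for conformal variations, then $v_1$ is a smooth positive critical point, hence $v_1=\beta v_0$. Therefore $\tilde g_1=\beta^{-2/3}\tilde g$, which is the asserted uniqueness up to positive constant rescaling.

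The main obstacle I expect is the first step: ensuring that \emph{every} critical point in $H(M)$, which a priori has only $v^{-1}\in L^{4/3}$, is a genuine weak solution with $v^{-7/3}\phi\in L^1$. The G\^{a}teaux differentiability in directions $\phi\geq 0$ is clean because $v+t\phi\geq v$. For signed $\phi$, I would first restrict to $\phi\geq 0$ to get the supersolution property and the positive lower bound via Proposition \ref{p-maxprinc}. Once $v^{-1}\in L^\infty$, differentiability in all directions is immediate.
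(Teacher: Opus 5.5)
Your proposal is correct and follows the paper's own route: a critical point is a weak solution of \eqref{pdetosolve}; the supersolution property together with Proposition \ref{p-maxprinc} and bootstrapping makes it smooth and positive; and the comparison argument of Step 4 identifies it with a positive multiple of the minimizer from Steps 1--3, so scale invariance gives $\mathcal{B}(v)=\mathfrak{B}$. Two of your additions are welcome refinements of points the paper passes over quickly: the separate treatment of the case $\lambda_1=0$, where Proposition \ref{p-maxprinc} is unavailable and you instead test against $\phi_1$ to force $\lambda(v)=0$, and the remark on one-sided G\^{a}teaux derivatives in directions $\phi\geq 0$.
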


	\addtocontents{toc}{\protect\setcounter{tocdepth}{2}}
	\section{Some properties of the global minimizers I: products of surfaces}\label{secction global min I}
	
	We want to study some features of 
	absolute minima of the Bach functional defined in 
	\eqref{bachfunct}: in particular, we focus on   
	Bach-parallel metrics, i.e. the ones such that $\nabla\bach\equiv 0$. Obviously, Bach-flat metrics 
	represent global minima of the functional, since they are all
	Bach-parallel: however, we aim at a characterization of 
	Bach-parallel metrics whose Bach tensors do not vanish identically.
	In order to do so, we exploit a modified version of the
	well-known de Rham's splitting Theorem (see e.g. \cite{boubel}):
	\begin{theorem} \label{splittingbach}
		Let $(M,g)$ be a simply connected, complete Riemannian manifold
		of dimension $n$. Assume that there
		exists a symmetric, parallel $(0,2)$-tensor 
		field $\operatorname{T}$. Then there exist $c_1,...,c_k\in\erre$ and Riemannian manifolds $(M_1,g_1),...,(M_k,g_k)$ such that 
		\begin{equation} \label{derhamdecomp}
			(M,g)\cong\Pi_{i=1}^k (M_i,g_i)
		\end{equation}
		and 
		\[\operatorname{T}\restrict{M_i}=c_i g_i.\]
		The isometry is unique up to compositions with isometries of the factors.
	\end{theorem}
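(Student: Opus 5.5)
The idea is to diagonalize $\operatorname{T}$ with respect to $g$ and decompose the tangent bundle into its eigenbundles, which are then parallel and yield a splitting via de Rham's theorem.

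\textbf{Step 1: the eigenbundles are parallel.} Let $\operatorname{T}^{\#}$ be the $g$-self-adjoint endomorphism associated with $\operatorname{T}$. Since $\nabla\operatorname{T}=0$, also $\nabla\operatorname{T}^{\#}=0$. Parallel transport along any curve therefore intertwines $\operatorname{T}^{\#}_p$ and $\operatorname{T}^{\#}_q$. Consequently the characteristic polynomial of $\operatorname{T}^{\#}$ is the same at every point (using connectedness, which follows from simple connectivity). So the eigenvalues $c_1,\dots,c_k$ are constant real numbers with constant multiplicities; they are real because $\operatorname{T}^{\#}$ is self-adjoint. Define
\[
E_i:=\ker(\operatorname{T}^{\#}-c_i\,\mathrm{id}).
\]
These are smooth subbundles of constant rank, $g$-orthogonal to each other, with $TM=\bigoplus_i E_i$. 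Each $E_i$ is invariant under parallel transport: if $\nabla_X\operatorname{T}^{\#}=0$ and $Y\in\Gamma(E_i)$, then $\operatorname{T}^{\#}(\nabla_XY)=\nabla_X(\operatorname{T}^{\#}Y)=c_i\nabla_XY$. Hence the holonomy representation preserves this orthogonal decomposition.

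\textbf{Step 2: splitting.} Since $M$ is simply connected and complete, the global de Rham decomposition theorem applies to any holonomy-invariant orthogonal decomposition of $T_pM$. Each $E_i$ is integrable and totally geodesic, because it is parallel: $\nabla_XY\in E_i$ for $X,Y\in E_i$. Its leaves are complete, and $(M,g)$ is isometric to $\prod_i(M_i,g_i)$, where $M_i$ is the leaf of $E_i$ through $p$ with the induced metric. Under this isometry $E_i$ corresponds to $TM_i$. Since $\operatorname{T}^{\#}=c_i\,\mathrm{id}$ on $E_i$ and $\operatorname{T}(X,Y)=0$ for $X\in E_i$, $Y\in E_j$ with $i\neq j$, we get $\operatorname{T}=\sum_i c_i g_i$, i.e.\ $\operatorname{T}\restrict{M_i}=c_ig_i$. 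Some $M_i$ may be reducible further, but this finer splitting is not needed; the statement only requires the factors indexed by the distinct eigenvalues.

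\textbf{Step 3: uniqueness.} Suppose another isometry $\Phi$ onto a product $\prod_j(M'_j,g'_j)$ has the property that $\operatorname{T}$ restricts to $c'_jg'_j$ on each factor, with the $c'_j$ distinct. Then the tangent spaces of the factors must be exactly the eigenbundles $E_i$. Two such isometries therefore differ by a map sending leaves of each $E_i$ to leaves of $E_i$, that is, by a product of isometries of the factors. This follows from the uniqueness part of de Rham's theorem, applied to the foliations given by the $E_i$.

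The main obstacle is Step 2, specifically the passage from the local to the global product structure. This is precisely where completeness and simple connectivity enter, and I would invoke the de Rham theorem as in \cite{boubel, eisen} rather than reprove it. A minor subtlety is that when the $c_i$ are not distinct, the factors with equal constants must be grouped together for the uniqueness statement to hold. The constancy of the eigenvalues in Step 1 is routine.
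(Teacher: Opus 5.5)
Your proof is correct and takes essentially the paper's route: the paper gives no argument of its own and simply cites the statement as a version of de Rham's splitting theorem (\cite{boubel, eisen}), and your Step 1 (the eigenbundles of $\operatorname{T}^{\#}$ have constant rank, are mutually orthogonal and are parallel) is exactly the reduction to de Rham that this citation leaves implicit. Your remark that the uniqueness claim only makes sense once factors with equal constants are grouped together, i.e.\ with the $c_i$ taken distinct, usefully clarifies the paper's somewhat loose formulation.
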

	\begin{rem}
		If $M$ is not simply connected, Theorem \ref{splittingbach} holds locally, as shown in \cite{eisen}.
	\end{rem}
	\begin{rem}
		Let $\operatorname{T}_g$ be a
		symmetric, parallel $(0,2)$-tensor defined on $(M,g)$ and $\operatorname{T}_{g_i}$ be the same tensor, but defined on the factor $M_i$ of the decomposition
		\eqref{derhamdecomp}: it is important to observe that, in general, 
		\[
		\operatorname{T}_g\restrict{M_i}\neq \operatorname{T}_{g_i}.
		\]
		For instance, if $\operatorname{T}_g$ is the Bach tensor $\bach_g$, its restrictions to the factors of the decomposition will not be, in general, the Bach tensors of the metrics $g_i$.
	\end{rem}
	
	When $(M,g)$ is Bach-parallel, Theorem \ref{splittingbach} implies that $(M,g)$ locally splits as $\Pi_{i=1}^k (M_i,g_i)$, $k\leq4$ and the splitting is global when $(M,g)$ is simply connected. Therefore, due to Theorem \ref{splittingbach}, we focus on the cases where $(M,g)$ is isometric to the product of two surfaces and a product of a three-dimensional Riemannian manifold $(N,g_N)$ with $(\SS^1,g_{\SS^1})$.
	\subsection{Bach-parallel metrics on a product of surfaces}
	If $(M,g)$ is simply connected and it is isometric to a Riemannian product of surfaces, we obtain a classification result (Theorem \ref{bachpar22}):
	\begin{proof}[Proof of Theorem \ref{bachpar22}]	
		We choose 
		a local orthonormal coframe $\{\theta^1,...,\theta^4\}$ for $\bar{g}=g_1\oplus g_2$,
		where $\{\theta^1,\theta^2\}$ (respectively, $\{\theta^3,\theta^4\}$) is a local coframe for 
		$g_1$ (respectively, for $g_2$). 
		In this case, we adopt the index convention 
		$a,b,c,...=1,2$ and $i,j,k,...=3,4$: for instance, 
		\[
		R_{abcd}=K_1\pa{\delta_{ac}\delta_{bd}-\delta_{ad}\delta_{bc}}, \quad R_{ijkl}=K_2\pa{\delta_{ik}\delta_{jl}-\delta_{il}\delta_{jk}}.
		\]
		By computing the components of the Ricci tensor and 
		the Weyl tensor, we are able to conclude that
		\begin{align} \label{bach22comp}
			B_{ab}&=\dfrac{1}{6}\sq{
				2\Delta_1 K_1\delta_{ab}-2(K_1)_{ab}-\Delta_2K_2\delta_{ab}+\pa{K_1^2-K_2^2}\delta_{ab}};\\
			B_{ai}&=0; \notag\\
			B_{ij}&=\dfrac{1}{6}\sq{
				2\Delta_2 K_2\delta_{ij}-2(K_2)_{ij}-\Delta_1K_1\delta_{ij}+\pa{K_2^2-K_1^2}\delta_{ij}}, \notag
		\end{align}
		where $(K_1)_{ab}$ are the components of the Hessian of $K_1$ and 
		$\Delta_1$ is the Laplace--Beltrami operator with respect to
		$g_1$ (the analogue
		holds for $(K_2)_{ij}$ and $\Delta_2$).
		It is immediate to see that, if both $K_1$ and $K_2$ are constant, $\nabla\bach\equiv 0$: 
		furthermore, it is known that
		\begin{itemize}
			\item $(M,g)$ is an Einstein manifold if and only if 
			$K_i$ is constant for every $i$ and $K_1=K_2$;
			\item $(M,g)$ is a
			locally conformally flat manifold if and only if 
			$K_i$ is constant for every $i$ and $K_1=-K_2$. 
		\end{itemize}
		In both cases, we would have that $(M,g)$ is 
		Bach-flat, contradicting our assumption: hence, 
		$K_1\neq\pm K_2$.
		
		Now, we assume that $K_i$ is not constant for 
		some $i=1,2$: by Theorem \ref{splittingbach}, we know that the restriction of $\bach$ to $N_i$ is a constant multiple of $g_i$, i.e. there exists $c\in\RR$ such that
		\[
		B_{11}=B_{22}=c, \quad B_{33}=B_{44}=-c
		\]
		and $B_{\alpha\beta}=0$ whenever 
		$\alpha\neq\beta$. 
		Without loss of generality, we focus on $N_1$ and we assume that $K_1$ is not constant: using 
		$B_{11}=B_{22}$ and $B_{12}=0$ we obtain
		\[
		(K_1)_{11}=(K_1)_{22}, \quad (K_1)_{12}=0.
		\]
		%
		This means that the \emph{trace-free Hessian} of $K_1$, defined as 
		\[
		\mathring{\hess_1}(K_1):=\hess_1(K_1)-\dfrac{\Delta_1K_1}{2}g_1
		\]
		vanishes at every point $p\in N_1$ (here $\hess_1$ denotes the Hessian with respect
		to the metric $g_1$), i.e. 
		$(N_1,g_1)$
		is a \emph{conformal gradient soliton} with potential function 
		$K_1$. 
		By the work of Tashiro \cite{tashiro}, 
		we know that $\nabla K$ is a conformal vector field, $K$ has at most $2$ critical points and 
		the conformal structure of $(N_1,g_1)$ is classified according to $\#\operatorname{crit}(K_1)$ (see also \cite{catmantmazz}). 
		
		For the sake of simplicity, from now on we will denote $K:=K_1$, $\hess:=\hess_1$ and $\Delta:=\Delta_1$. First, assume that $\#\operatorname{crit}(K)=2$: in this case, $M$ is compact and, by a result due to 
		Bourguignon and Ezin \cite[Theorem II.9]{bourguignonezin}, we have that 
		\[
		0=\int_{N_1} (\nabla K)S \, dV_{g_1}=
		2\int_{N_1}(\nabla K)K \, dV_{g_1}=2\int_{N_1}\abs{\nabla K}^2dV_{g_1},
		\]
		which is impossible, since we are assuming that 
		$K$ is not constant, and this implies that 
		$\#\operatorname{crit}(K)\leq 1$. 
		If $p\in N_1$ is a regular point for $K$ and 
		$K(p)=C$, then there exists an open 
		neighborhood $U$ of the level set 
		$\Sigma=\{K=C\}$, which does not contain any critical point of $K$, such that $K$ only depends on the signed distance $\rho$ from $\Sigma$ on $U$ 
		and 
		\begin{equation} \label{confsolitonmetric}
			g_1=d\rho^2+(K'(\rho))^2dt^2;
		\end{equation}
		furthermore, the tubular neighborhood of 
		$\Sigma$ can be chosen of
		the form $U=\{\rho_*<\rho<\rho^*\}$, where 
		$\rho_*\in [-\infty, 0)$ and $\rho^*\in (0,+\infty]$ \cite{catmantmazz}.
		We know that, for every complete Riemannian surface which is endowed with a conformal gradient soliton structure, 
		the Gaussian curvature $K$ must satisfy
		\[
		K(\rho)=-\dfrac{f'''(\rho)}{f'(\rho)}
		\]
		outside of the critical set of the potential function $f$.
		Since, in our case, 
		$f=K$, the Gaussian curvature $K$ must be a solution of
		the ODE
		\[
		K(\rho)\cdot K'(\rho)=-K'''(\rho)
		\]
		whenever $\nabla K\neq 0$, which can be
		seen as a \emph{static KdV equation}. 
		By integrating the ODE twice, we obtain that
		\begin{equation} \label{odederiv}
			(K')^2=P(K), \qquad 
			P(K):=-\dfrac{1}{3}K^3+AK+B,
		\end{equation} 
		where $A,B\in\RR$: 
		this means that we can express the solution implicitly, outside critical points, as 
		\begin{equation} \label{implicsol}
			\int_{K(0)}^{K(\rho)}\dfrac{dK}{\sqrt{P(K)}}=\pm\rho. 
		\end{equation}
		Since $\#\operatorname{crit}(K)\leq 1$, 
		around a regular level set $\Sigma$, without loss of generality we can assume that $\rho^*=+\infty$ (if $K$ has no critical points, we can simultaneously choose $\rho_*=-\infty$). By smoothness of $g_1$, $K$
		cannot blow up for finite distances, so we suppose that $K\to \infty$ as $\rho\to +\infty$.
		However, we immediately observe that 
		$K$ cannot tend to $+\infty$, since it would contradict \eqref{odederiv}. Assuming that $K\to -\infty$ as $\rho\to +\infty$ gives us a contradiction as well: indeed, 
		when $K\to -\infty$, we have that $\sqrt{P(K)}\approx (-K)^{3/2}$, which means that the integral in \eqref{implicsol} is finite, while $\rho\to +\infty$. Since $U$ does not contain the possible critical point of $K$, this implies that 
		$K$ must be bounded as $\rho\to +\infty$: if $\#\operatorname{crit}(K)=0$, the same conclusion
		holds when $\rho\to -\infty$.    
		\begin{itemize}
			\item Assume that $P(K)$ has three, distinct
			real roots, i.e.
			\[
			P(K)=-\dfrac{1}{3}(K-a)(K-b)(K-c),
			\]
			where $a,b,c\in\RR$, $a<b<c$: by \eqref{odederiv}, we immediately obtain that 
			either $K\leq a$ or $b\leq K\leq c$. In the first case, by monotonicity and boundedness of $K$ we would get that $K\to d\in\RR$ as 
			$\rho\to +\infty$: however, since $K$ is smooth, $K'$ approaches $0$ as $\rho\to+\infty$, which, by the expression of $P(K)$ and the fact that $K\leq a$, means that $d=a$. Since $\sqrt{P(K)}\approx
			\sqrt{a-K}$ as $K\to a$, the integral in \eqref{implicsol} (computed between $K(0)$ and $a$) must be finite, while $\rho\to+\infty$, which is impossible. 
			
			This means that $b\leq K\leq c$: if we assume that there exist $-\infty<\rho_b,\rho_c<+\infty$ such that $K\to b$ as $\rho\to\rho_b$ and 
			$K\to c$ as $\rho\to\rho_c$, by \eqref{odederiv} we obtain that 
			$K'$ goes to $0$ as $\rho$ goes to 
			$\rho_b$ or $\rho_c$, which contradicts the fact that $\#\operatorname{crit}(K)\leq 1$. Without loss of generality, we may assume that $K\to b$ as $\rho\to+\infty$: following the same line of reasoning as before, we obtain a contradiction, which means that, if 
			$P(K)$ has three real distinct roots, the ODE has no non-constant solution. 
			\item Now, let 
			\[
			P(K)=-\dfrac{1}{3}(K-a)^3, \quad a\in\RR;
			\]
			by \eqref{odederiv}, we know that 
			$K\leq a$ and we can solve the ODE explicitly,
			finding that $a=0$ and 
			\[
			K(\rho)=-\dfrac{12}{(\rho-C)^2}, \quad
			C=\pm\sqrt{\dfrac{12}{-K(0)}}.
			\]
			If we assume that $\rho_*>-\infty$, we would get that 
			$K\to 0$ as $\rho\to\rho_*$, but this is impossible, which means that $\rho_*=-\infty$: however, this gives a contradiction, since $K$ blows up when $\rho\to C$, 
			and this shows that there
			are no non-constant solutions.
			\item Now, assume that 
			\[
			P(K)=-\dfrac{1}{3}Q(K)(K-a),
			\] 
			where $a\in\RR$ and $Q(K)=K^2+a_1K+a_2$ is an irreducible
			polynomial of degree $2$ in $K$. Again, 
			we have $K\leq a$ and, as in the case where $P(K)$ has three distinct real roots, we obtain
			that $K\to a$ as $\rho\to +\infty$, showing the same contradiction. 
			\item Finally, assume that 
			$\#\operatorname{crit}(K)=0$ and that 
			\[
			P(K)=-\dfrac{1}{3}(K-a)^2(K-b), \quad
			a,b\in\RR.
			\]
			If $a<b$, since $K$ has no critical points 
			we must have $a<K<b$ or $K<a$: in the first case, 
			since we can choose $\rho_*=-\infty$ and 
			$K$ cannot go to $a$ or $b$ in finite distances (otherwise, we would have critical values for $K$ by \eqref{odederiv}), 
			by monotonicity of $K$ we can assume that $K\to b$ as $\rho\to +\infty$, which is impossible, as it can be seen applying the same argument as before using \eqref{implicsol}. In the second case, since $K$ must be bounded above and below by the previous reasoning, 
			we would get that $K\to a$ as $\rho\to\pm\infty$, which contradicts the fact that $K$ is monotone;
			on the other hand, if $a>b$, we would simply have that $K<b$: as before, $K\to b$ as $\rho\to \infty$, which is again impossible. 
		\end{itemize} 
		The only case we did not cover is when 
		$\#\operatorname{crit}(K)=1$ and
		\[
		P(K)=-\dfrac{1}{3}(K-a)^2(K-b):
		\]
		we want to show that, with this configuration,
		there exists a unique non-constant solution $K$. 
		First, assume that $a$ is the critical value for 
		$K$: then, since $K<b$, we must have $a<b$. 
		Furthermore, assuming that $\rho^*=+\infty$, since $K$ has a critical point we must have that 
		$K\to a$ as $\rho\to \rho_*$, i.e. the critical value must be achieved for a finite distance. However, this is impossible, since the integral appearing in \eqref{implicsol}, computed between $K(0)$ and $a$, is not finite in this case: this means that the critical value for $K$ must be $b$.
		Furthermore, a simple computation shows that
		\begin{align*}
			-\dfrac{1}{3}K^3+AK+B=
			-\dfrac{1}{3}K^3+\dfrac{1}{3}K^2
			(b+2a)-\dfrac{1}{3}aK(a-2b)+\dfrac{1}{3}a^2b,
		\end{align*}
		which holds if and only if
		\[
		\begin{cases}
			b&=-2a\\
			A&=a^2\\
			B&=-\dfrac{2}{3}a^3
		\end{cases};
		\]
		in particular, 
		\[
		P(K)=-\dfrac{1}{3}(K-a)^2(K+2a).
		\]
		
		From now on, we rely on the well-known work of Ishihara and Tashiro on conformal gradient solitons \cite{ishiharatashiro, tashiro}, as well as on
		results contained in \cite{osgood}: for the sake of completeness, we include here the full argument, even though some facts will be a direct consequence of their work. Since $\#\operatorname{crit}(K)=1$, we know that
		$(M,g)$ is conformally equivalent to the Euclidean plane $\RR^2$ or to the hyperbolic plane $\mathbb{H}^2$ (we point out that Tashiro forgets to include the Euclidean space case in his statement of Theorem 1 \cite{tashiro}). Let $p_0$ be the unique critical point for $K$ and let 
		$B$ be a small geodesic ball around $p_0$ such that we can express $g_1$ in polar geodesic coordinates, i.e. 
		\begin{equation} \label{polargeodcoord}
			g_1=dr^2+(h(r,\theta))^2d\theta^2
		\end{equation}
		for some smooth function $h$.
		By the work of Tashiro \cite{tashiro}, we know that, up to reparameterization, the geodesics emanating from $p_0$ are $K$\emph{-curves}, i.e. they are integral curves for $\nabla K$, and all the distance circles around the critical point are actually regular level sets of $K$: since $K$ only depends on the signed distance from these level sets, we also obtain that $K$ is radial on $B$, i.e. $K=K(r)$. This also follows directly by observing that 
		the vanishing of $\mathring{\hess}(K)$ implies the validity of the ODE
		\[
		\begin{cases}
			&\nabla_{\dot{\gamma}(r)}W(r)=\dfrac{\Delta K} {2}(\gamma(r))\dot{\gamma}(r)\\
			&W(0)=0,
		\end{cases}
		\]
		for every geodesic $\gamma(r)=\exp_{p_0}(rv)$ in $B$, where $v$ is a unit tangent vector at $p_0$ and $W(r):=\nabla K(\gamma(r))$;
		this, in turn, gives
		\[
		\nabla K(\gamma(r))=a(r)\cdot\dot{\gamma}(r),
		\]
		where $\dot{\gamma}(r)=\partial_r$ and 
		\[
		a(r)=\int_0^r\dfrac{\Delta K}{2}(\gamma(s))ds. 
		\]
		Now, we can use again the conformal
		soliton equation $\hess(K)=(\Delta K/2)g_1$,
		together with \eqref{polargeodcoord}, in order to
		obtain that, outside $p_0$,
		\[
		\dfrac{K''(r)}{K'(r)}=\dfrac{\partial_r h}{h}
		\Longrightarrow 
		h(r,\theta)=w(\theta)\cdot K'(r),
		\]
		for some smooth function $w$. We point out that, since $g_1$ is smooth on $M$, the expression \eqref{polargeodcoord} can be extended to the critical point and this implies that 
		\[
		h(0,\theta)=0, \qquad \partial_r h(0,\theta)=1:
		\]
		in particular, this tells us that, for every $\theta$, $h(r,\theta)>0$ in $B$ and, moreover, 
		$h$ must be an odd function in $r$. 
		We can exploit the condition on $\partial_rh$ to compute 
		\[
		1=\lim_{r\to 0}\dfrac{h(r,\theta)}{r}=
		w(\theta)\lim_{r\to 0}\dfrac{K'(r)}{r}=
		w(\theta)K''(0);
		\]
		this shows that $K''(0)\neq 0$ and that $w$ is
		constant, i.e. $h$ only depends on $r$ and
		\begin{equation} \label{hintermsofKprime}
			h(r)=\dfrac{1}{K''(0)}K'(r).
		\end{equation}
		In particular, inside $B$, the metric $g_1$ can
		be written as 
		\begin{equation} \label{radialwarpedmetric}
			g_1=dr^2+(h(r))^2d\theta^2,
		\end{equation}
		where, using the smoothness of $g_1$ on $M$ again,
		\[
		h(0)=0, \quad h'(0)=1, \quad h \mbox{ is odd}.
		\]
		Now, we want to show that \eqref{radialwarpedmetric} holds globally on $M$. 
		In order to see this, first we recall that, up to reparameterization, geodesics from $p_0$ are integral curves
		of $\nabla K$ for all $r>0$, which means that
		they cannot intersect at any other point different from $p_0$: this
		means that the cut locus $\operatorname{Cut}(p)$ is non-empty
		if and only if there exist points conjugate to $p$. 
		Let $r_0=\operatorname{inj}(p)$ and consider 
		the open ball $B_{r_0}$ centered at $p$, where $g$ is expressed
		as in \eqref{radialwarpedmetric}. We assume that
		there exists $q\in\partial B_{r_0}$ conjugate to $p$
		(hence, $r_0<+\infty$),
		which means that there exists a smooth, non-trivial Jacobi field
		$J=J(r)$ along the geodesic $\gamma$ connecting
		$p$ to $q$ such that $J(0)=0$ and $\lim_{r\to r_0}J(r)=0$,
		but this is impossible: indeed, a simple computation shows that 
		\[
		J(r)=c\cdot\partial_\theta, \mbox{ }c\in \RR\setminus\{0\} \quad \Longra \quad \abs{J(r)}=\abs{h(r)},
		\]
		which yields
		\[
		0=\lim_{r\to r_0}\abs{J(r)}=\lim_{r\to r_0}\abs{h(r)}.
		\]
		However, by \eqref{hintermsofKprime} and the smoothness of $K$, we would obtain
		that $\nabla K(q)\equiv 0$, which is impossible, since 
		$K$ has only one critical point. Since $(N_1,g_1)$ is complete,
		we conclude that $r_0=+\infty$ and, therefore,
		that
		$\operatorname{Cut}(p_0)=\varnothing$, which implies 
		that \eqref{radialwarpedmetric} holds 
		globally on $M$: in particular, $(M,g)$ is
		\emph{rotationally symmetric}. 
		
		Now, recall that 
		\[
		P(K)=-\dfrac{1}{3}(K-a)^2(K+2a):
		\]
		as said before, $-2a$ is the critical value
		for $K$ and, moreover, $K\leq -2a$ by \eqref{odederiv}, which means that 
		$-2a$ is the maximum of $K$. Assume that $a>0$:
		since $K\leq -2a<a$, we write 
		\[
		K'=-\dfrac{1}{\sqrt{3}}(a-K)\sqrt{-K-2a}. 
		\]
		We point out that, in \eqref{odederiv}, 
		$K$ is a function of the signed distance $\rho$
		from a regular level set $\Sigma$: however, since
		$h(r)=(K'(r)/K''(0))$ and both $r$ and $\rho$ are arc length parameterizations of the geodesics from $p_0$, we can simply pass
		from \eqref{confsolitonmetric} to \eqref{radialwarpedmetric} with a change of coordinates and interpret $K$ in \eqref{odederiv} and \eqref{implicsol} as a function of $r$.
		We have that
		\[
		\dfrac{r}{\sqrt{3}}=-\int_{-2a}^{K(r)}\dfrac{dK}{(a-K)\sqrt{-K-2a}}:
		\]
		making the standard substitution 
		$u=\sqrt{-K-2a}$, it is immediate to obtain
		\[
		\arctan\pa{\sqrt{\dfrac{-K-2a}{3a}}}=\dfrac{r\sqrt{a}}{2} \quad \Longra \quad
		-\dfrac{\pi}{\sqrt{a}}<r<\dfrac{\pi}{\sqrt{a}},
		\]
		which is impossible, since $K$ cannot blow up
		for finite radii $r$. 
		
		If $a=0$, we get that $K(0)=0$ and
		\[
		K'=-\dfrac{1}{\sqrt{3}}(-K)^{\frac{3}{2}},
		\]
		i.e. 
		\[
		\dfrac{r}{\sqrt{3}}=-\int_{0}^{K(r)}\dfrac{dK}{(-K)^{\frac{3}{2}}},
		\]
		which is impossible, since the integral diverges:
		hence, the only possibility is that $a<0$ and,
		therefore, $a<K\leq -2a$. 
		We obtain
		\[
		\dfrac{r}{\sqrt{3}}=-\int_{-2a}^{K(r)}\dfrac{dK}{(K-a)\sqrt{-K-2a}};
		\]
		by standard computations, we can show that 
		\begin{equation} \label{nonconstsolution}
			K(r)=3a\tanh^2\pa{\dfrac{r\sqrt{-a}}{2}}-2a.
		\end{equation}
		We can also compute $h$, due to the fact that,
		for every $r>0$, \eqref{hintermsofKprime} holds:
		indeed, since 
		\begin{equation*}
			K'(r)=3(-a)^{\frac{3}{2}}\tanh\pa{
				\dfrac{r\sqrt{-a}}{2}}\operatorname{sech}^2\pa{\dfrac{r\sqrt{-a}}{2}}, \quad
			K''(0)=-\dfrac{3}{2}a^2,
		\end{equation*}
		we obtain
		\begin{equation} \label{solutionh}
			h(r)=\dfrac{2}{\sqrt{-a}}\tanh\pa{
				\dfrac{r\sqrt{-a}}{2}}\operatorname{sech}^2\pa{\dfrac{r\sqrt{-a}}{2}}.
		\end{equation}
		We can define 
		\[
		H(r):=\exp\pa{\int_{r_0}^r\dfrac{1}{h(s)}ds}
		\]
		for some $r_0>0$: we observe that, since $h(0)=0$ and $h'(0)=1$,
		$H(r)\to 0$ as $r\to 0$ and, therefore, $H$ is a smooth,
		increasing map from $[0,+\infty)$ to $[0,R)$, where
		\[
		R:=\lim_{r\to +\infty}H(r).
		\]
		We point out that $g_1$ can be expressed as
		\[
		g_1=\pa{\dfrac{h(r)}{H(r)}}^2\pa{dH^2+H^2d\theta^2},
		\]
		which means that $g_1$ is conformal to the Euclidean metric
		on the ball in $\RR^2$ of radius $R$;
		by the previous discussion, it is immediate to see that 
		$R=+\infty$ in our case, which means that 
		$(N_1,g_1)$ is conformal to the Euclidean plane $\RR^2$.
		
		Finally, we note that that $K\to a<0$ as 
		$r\to +\infty$: since $h\to 0$, by \eqref{solutionh} 
		this means that $(N_1,g_1)$ has a hyperbolic cusp-like end and finite area, since 
		\[
		\operatorname{Area}(N_1)=
		\int_MdV_{g_1}=\int_0^{+\infty}\int_0^{2\pi}h(r) \, d\theta \, dr=-\dfrac{4\pi}{a}.
		\]
		We can apply the same computations for $g_2$ in order to conclude that one of the following holds:
		\begin{itemize}
			\item $K_1$ and $K_2$ are constant and, as observed before, $K_1\neq\pm K_2$;
			\item either $K_1$ or $K_2$ are given by 
			\eqref{nonconstsolution}. 
		\end{itemize}
	\end{proof}
	\subsection{A new example of a strictly Bach-flat metric}
	Exploiting the computations in the proof of Theorem \ref{bachpar22} when both $K_1$ and $K_2$ are not constant, we are able to construct a non-trivial one-parameter family of Bach-flat metrics on the product of two surfaces: namely, Bach-flat metrics which are neither locally conformally flat (and hence they are not half conformally flat) nor conformally Einstein.  
	\begin{proof}[Proof of Theorem \ref{thm example}]
		By direct inspection, we have that $(M,g)$ is neither LCF nor Einstein since $K_i$ is not constant and $K_1\neq\pm K_2$
		(also, we recall that a product metric on a product of surfaces is half conformally flat if and only if LCF);
		moreover, by the proof of Theorem \ref{bachpar22}, we know that $K_1$ and $K_2$ are solutions to the conformal solitons equation
		\begin{align}\label{confsol}
			\hess_i(K_i)=\dfrac{\Delta_i K_i}{2}g_i.
		\end{align}
		Hence, by \eqref{bach22comp} and \eqref{confsol}, we only need to check that
		\begin{align}\label{bachcomp22fl}
			B_{ab}&=\dfrac{1}{6}\sq{
				\Delta_1 K_1\delta_{ab}-\Delta_2K_2\delta_{ab}+\pa{K_1^2-K_2^2}\delta_{ab}};\\
			B_{ij}&=\dfrac{1}{6}\sq{
				\Delta_2 K_2\delta_{ij}-\Delta_1K_1\delta_{ij}+\pa{K_2^2-K_1^2}\delta_{ij}},\notag
		\end{align}
		are identically zero to verify that $(M,g)$ is Bach-flat; to do so, we set
		\[
		L_i(r_i)\coloneqq \tanh\pa{\dfrac{r_i\sqrt{-a}}{2}},
		\]
		so that
		\[
		h_i=\frac{2}{\sqrt{-a}}L_i\pa{1-L_i^2} \quad \mbox{ and }
		\quad K_i=3aL_i^2-2a,
		\]
		where we have omitted the dependence on $r_i$ to simplify the notation.
		As a consequence, by \eqref{hintermsofKprime} we can easily verify that 
		\[
		K_i'=-\frac{3}{2}a^2h_i\quad\text{and}\quad K_i''=-\frac{3}{2}a^2 h_i',
		\]
		where 
		\[
		h'_i=\frac{2}{\sqrt{-a}}\sq{L_i'(1-L_i^2)-2L_i^2L_i'};
		\]
		thus $\Delta_i K_i=-3a^2 h_i'=-3a^2(1-L_i^2)(1-3L_i^2)$, from which, substituting this expression into \eqref{bachcomp22fl}, we deduce 
		\begin{align}\label{compbaczero}
			B_{ab}&=\dfrac{1}{6}\sq{-3a^2(1-L^2_1)(1-3L^2_1)+3a^2(1-L^2_2)(1-3L^2_2)+(3aL_1^2-2a)^2-(3aL_2^2-2a)^2}\delta_{ab}=0;\\
			B_{ij}&=\dfrac{1}{6}\sq{-3a^2(1-L^2_2)(1-3L^2_2)+3a^2(1-L^2_1)(1-3L^2_1)+(3aL_2^2-2a)^2-(3aL_1^2-2a)^2}\delta_{ij}=0.\notag
		\end{align}
		To prove that $(M,g)$ is not conformal to a smooth Einstein metric we argue by contrdiction: let us consider $\tilde{g}=\varphi^{-2}g$, $0<\varphi \in\cinf$, such that 
		$\tilde{\ricc}=\lambda \tilde{g}$, $\lambda\in \RR$; then the components of $\tilde{\ricc}$ can be rewritten as
		\begin{align*}    \varphi^{-2}\tilde{R}_{\alpha\beta}=R_{\alpha\beta}+2\frac{\varphi_{\alpha\beta}}{\varphi}+\pa{\frac{\Delta_M\varphi}{\varphi}-3\frac{\abs{\nabla^M\varphi}_g}{\varphi^2}}\delta_{\alpha\beta},
		\end{align*}
		where $\alpha,\beta=1,2,3,4$, i.e., $(M,g)$ is conformally Einstein if there exists a positive smooth solution $\varphi$ of         
		\begin{align}\label{riccitilde} 
			\varphi\ricc+2\hess_M(\varphi)=\varphi \Lambda g,
		\end{align}        
		where $\Lambda\in\cinf$.
		Since $(M,g)$ is the Riemannian product of two surfaces, we have that $\ricc=K_1 g_1+K_2g_2$ and, hence, by \eqref{riccitilde} the mixed components $\varphi_{ai}$ of $\hess(\varphi)$ vanish identically, implying that
		\[
		\varphi=f_1+f_2,
		\]
		where $f_1\in C^{\infty}(N_1)$ and $f_2\in C^{\infty}(N_2)$. Moreover, by \eqref{riccitilde}, we deduce that
		\[
		(f_1)_{ab}=\varphi_{ab}=\frac{\varphi}{2}\pa{\Lambda-K_1}\delta_{ab},\quad (f_2)_{ij}=\varphi_{ij}=\frac{\varphi}{2}\pa{\Lambda-K_2}\delta_{ij},
		\]
		and, in particular,
		\begin{align}\label{deriv fi}
			0&=(f_1)_{abi}=\frac{1}{2}\pa{(\Lambda \varphi)_i-K_1(f_2)_i}\delta_{ab};\\
			0&=(f_2)_{ija}=\frac{1}{2}\pa{(\Lambda \varphi)_a-K_2(f_1)_a}\delta_{ij}.\notag
		\end{align}
		We now exploit \eqref{deriv fi} in order to obtain the equations 
		$0=(f_1)_{abia}$ and $0=(f_2)_{ijai}$ (not summing over $a$ and $i$):
		taking their difference, we obtain the identity
		\begin{equation} \label{productidentity}
			(K_1)_a(f_2)_i=(K_2)_i(f_1)_a.
		\end{equation}
		As a consequence, at each regular point $p\in N_2$ of $K_2$, we have that
		\[
		(f_1)_a=\frac{(f_2)_i(p)}{(K_2)_1(p)}(K_1)_a,
		\]
		i.e. there exists $C:=(f_2)_i(p)/(K_2)_i(p)\in\RR\setminus\{0\}$ such that $(f_1)_a=C\cdot (K_1)_a$ on $N_1$.
		Following the same line of reasoning, we can
		exploit \eqref{productidentity} again on a regular
		point of $K_1$ to obtain that there exists
		$C'\in\RR\setminus\{0\}$ such that 
		$(f_2)_i=C'\cdot (K_2)_i$: a simple inspection reveals
		that, in fact, $C'=C$ and,
		since $f_i$ is defined on $N_i$, we deduce that
		\[
		f_i=C K_i+D_i, \quad D_i\in\RR.
		\]
		It follows that the conformal factor $\varphi$ can be rewritten as
		\[
		\varphi=C(K_1+K_2)+D,
		\]
		where $D=D_1+D_2$
		and, using this identity, we get
		\begin{align}\label{phiric}
			\varphi\ricc=(C(K_1+K_2)+D)(K_1g_1+K_2g_2)=CK_1^2g_1+CK_2^2g_2+CK_1K_2g+DK_1g_1+CK_2g_2.  
		\end{align}
		On the other hand, exploiting the computations in \eqref{compbaczero}, we obtain that $\Delta_iK_i=a^2-K_i^2$ and, since $K_i$ is a solution of \eqref{confsol}, we have
		\begin{align}\label{hessphi}
			\hess_M(\varphi)=\hess_M(C(K_1+K_2)+D)=-\frac{C}{2}\pa{(K_1^2-a^2)g_1+(K_2^2-a^2)g_2}.
		\end{align}
		In particular, using \eqref{phiric} and \eqref{hessphi} into \eqref{riccitilde}, we conclude that $D=0$ and hence
		\[\varphi=C\pa{K_1+K_2};\]
		however, this is impossible, since
		\[
		Z:=\{(p,q)\in N_1\times N_2: K_1(p)+K_2(q)=0\}\neq\varnothing,
		\]
		and the conformal metric $\tilde{g}$ cannot be smoothly extended to $Z$, which concludes the proof.
	\end{proof}
	\begin{rem}
		If we consider a Riemannian four-manifold of the form $(N_1\times N_2, g=g_1\oplus g_2)$ and we assume that 
		$S_g\neq 0$ at every point, a result due to Derdzi\'{n}ski
		\cite[Lemma 18.4]{derdzinski}
		assures that $g$ is conformally Einstein if and only if 
		both factors satisfy \eqref{confsol}: furthermore, the 
		Einstein metric in the conformal class is exactly $\tilde{g}=\varphi^{-2}g$, where $\varphi=K_1+K_2$, which is also unique up to constant rescaling. The Riemannian metrics described in Theorem \ref{thm example} are, therefore, conformally Einstein on 
		$M\setminus Z$. 
	\end{rem}
	\subsection{Conformal gradient solitons, HCMU and extremal Hermitian metrics} \label{secction extHerm}
	Let $(M,g)$ be a complete Riemannian surface carrying 
	a conformal gradient soliton structure with potential function given by the Gaussian curvature $K$, i.e.
	\begin{equation} \label{HCMU}
		T:=\mathring{\hess}(K)=\hess(K)-\dfrac{\Delta K}{2}g\equiv 0.
	\end{equation}
	If $K$ is not constant, these metrics are also
	called \emph{HCMU metrics} and they have been extensively studied over the years, especially in the context of closed singular surfaces (see e.g. \cite{chenwuHCMU, chenwuxu, chenextremal,  chenobstruction}): for instance, the metric $g_i$ appearing in the statement of Theorem \ref{bachpar22} is one of the \emph{monopole metrics} appearing in \cite[Section 8.2]{chenextremal}. There exists a variational characterization of HCMU metrics, for which we first assume that $M$ is closed: it is well-known that, if $(M,g)$ is K\"{a}hler with respect to the complex structure $J$, \eqref{HCMU} is equivalent to the fact that $\nabla^{1,0} K$ is a holomorphic vector field.
	In turn, this holds if and only if $g$ is a critical point
	of the so-called \emph{Calabi functional} \cite{calabi1, calabi2}
	\[
	\mathfrak{E}(g):=\int_MK_g^2dV_g,
	\]
	where variations are taken inside the K\"{a}hler class
	of the K\"{a}hler form $\omega$ of $(M,g,J)$:
	these metrics are called \emph{extremal K\"{a}hler} and they can be seen as the ``best choices" of K\"{a}hler metrics inside a K\"{a}hler class (see e.g. \cite{szeke1, szeke2} for detailed discussions). 
	Although the Calabi functional can be defined on every compact complex manifold (replacing $K$ with the scalar curvature), in complex dimension 1 the extremal 
	K\"{a}hler condition can be translated as follows: $g$ is a critical point 
	of the Calabi functional with respect to conformal variations
	which fix the total volume of $(M,g)$, 
	i.e. $\tilde{g}(t)=e^{2tu}g$, where 
	\[
	\int_M u \, dV_g=0,
	\]
	and this leads to the Euler--Lagrange equation 
	$\nabla\mathfrak{E}(g)=c\cdot g$, $c\in\RR$, which reads
	\begin{equation} \label{eulerlagrangecalabiconf}
		\Delta K+K^2=C, \qquad C\in\RR.
	\end{equation}
	It is immediate to see that \eqref{eulerlagrangecalabiconf} is always implied by \eqref{HCMU}: indeed,
	by choosing 
	a local orthonormal frame $\{e_1,e_2\}$ and using
	the standard commutation formula for third derivatives,
	if $i\neq j$, by \eqref{HCMU} we have
	\[
	\dfrac{(\Delta K)_i}{2}=K_{jji}=K_{jij}-K\cdot K_i=-\dfrac{1}{2}(K^2)_i \Longra 
	\nabla(\Delta K+K^2)=0,
	\]
	which is exactly \eqref{eulerlagrangecalabiconf} (of course we are not summing over $j$).
	On the other hand, since $M$ is closed, we can also see that the converse is true, i.e. \eqref{HCMU} and \eqref{eulerlagrangecalabiconf} are equivalent:
	indeed, if \eqref{eulerlagrangecalabiconf} holds, we have
	\[
	\diver(\operatorname{T})_i=K_{jij}-\dfrac{(\Delta K)_i}{2}=
	\dfrac{1}{2}(\Delta K + K^2)_i=0,
	\]
	which leads to
	\[
	0=\int_M\langle\diver(\operatorname{T}),\nabla K\rangle dV_g=
	-\int_M\langle\operatorname{T},\hess(K)\rangle dV_g=-\int_M\abs{\operatorname{T}}^2dV_g,
	\]
	i.e. \eqref{HCMU}. 
	
	However, if $(M,g)$ is complete and non-compact, it is not always true that \eqref{eulerlagrangecalabiconf} implies 
	\eqref{HCMU}, while the converse still holds: a Hermitian metric on $(M,J)$ that satisfies \eqref{eulerlagrangecalabiconf} is called \emph{extremal Hermitian}. In this case, we consider compactly supported, conformal variations that fix the relative volume, i.e. $\tilde{g}(t)=e^{2tu}g$, where $u$ has compact support and 
	\[
	\int_M u\, dV_g=0.
	\]
	These metrics have been studied for several years, especially in the context of surfaces admitting cusp or conical singularities at finitely many points (see e.g. 
	\cite{chenwuexplicit, chenwuxu, chenextremal, wangzhu}): again, the metric $g_i$ in the statement of 
	Theorem \ref{bachpar22} can be seen as an extremal Hermitian (in fact, HCMU) metric on $\SS^2\setminus\{p\}$, 
	with a single hyperbolic cusp singularity at infinity. Many
	(non-)existence and classification results concerning these
	metrics are, to the best of our knowledge, obtained by imposing finite energy and finite area conditions. 
	Here, we note that the strategy of the proof of Theorem \ref{bachpar22} allows us to obtain a classification result
	for HCMU metrics on complete surfaces, without any extra assumptions, and for extremal Hermitian metrics, only under a finite energy hypothesis on $\nabla K$.
	\begin{proof}[Proof of Corollary \ref{corextKahler}]
		The first statement is a direct consequence of the proof
		of Theorem \ref{bachpar22}. In order to prove the second claim, it is sufficient to show that the finite energy condition on $\nabla K$, together with \eqref{eulerlagrangecalabiconf}, implies \eqref{HCMU}. 
		By the previous computations, we know that 
		$\diver(\operatorname{T})\equiv 0$ on $M$: let 
		$B_R\subset M$ be an open ball of radius $R$ 
		and let us consider a cutoff function $\chi\in \cinf$
		such that 
		\[
		0\leq\chi\leq 1, \quad \chi\equiv 1 \mbox{ on } B_R, \quad 
		\chi\equiv 0 \mbox{ on } M\setminus B_{2R}, \quad
		\abs{\nabla\chi}\leq\dfrac{C}{R} \mbox{ on }
		B_{2R}\setminus B_{R}.
		\]
		We integrate by parts and use Cauchy--Schwarz in order to obtain 
		\begin{align*}
			0&=\int_M\chi^2\langle\diver(\operatorname{T}),\nabla K\rangle dV_g=
			-\int_M\sq{\chi^2\langle\operatorname{T},\hess(K)\rangle+
				2\chi\operatorname{T}(\nabla\chi,\nabla K)}dV_g\leq\\
			&\leq-\int_M\chi^2\abs{\operatorname{T}}^2dV_g+
			2\pa{\int_M\chi^2\abs{\operatorname{T}}^2dV_g}^{\frac{1}{2}}\pa{\int_M\abs{\nabla\chi}^2\cdot\abs{\nabla K}^2dV_g}^{\frac{1}{2}},
		\end{align*}
		i.e., since $\nabla K\in L^2(M)$,
		\[
		\int_{B_R}\abs{\operatorname{T}}^2dV_g\leq \int_M\chi^2\abs{\operatorname{T}}^2dV_g\leq
		4\int_M\abs{\nabla\chi}^2\cdot\abs{\nabla K}^2dV_g\leq \dfrac{C}{R^2}:  
		\]
		since, by the Monotone Convergence Theorem, the left-hand side goes to $\norm{\operatorname{T}}_{L^2(M)}^2$ as \linebreak
		$R\to +\infty$, we get that $\operatorname{T}\equiv 0$ on $M$ and this concludes the proof. 
	\end{proof}
	\begin{rem}
		It is worth to note that, if $M$ is closed, Corollary \ref{corextKahler} provides a proof of the well-known fact that extremal K\"{a}hler metrics on closed surfaces always have constant Gaussian curvature. 
	\end{rem}
	\section{Some properties of the global minimizers II: products of the type \texorpdfstring{$\SS^1\times N$}{S^1 X N}}
	\label{secction global min II}
	
	If we assume that $(M,g)$ isometrically splits as 
	a product of the form $\SS^1\times N$, where $(N,g_N)$ is a Riemannian three-manifold, so far we are not able to provide a full classification result for the Bach-parallel condition. Indeed, we can compute the local components of
	$\bach_g$ with respect to a local orthonormal coframe as
	\small
	\begin{align} \label{bach13comp}
		2B_{11}&=-\dfrac{1}{6}\Delta_N {}^NS-\dfrac{1}{2}\abs{{}^N\operatorname{E}}^2; \\
		2B_{ij}&=\Delta_N{}^NR_{ij}-\dfrac{1
		}{3}{}^NS_{ij}+\dfrac{7}{3}{}^NS\cdot{}^NR_{ij}
		-4{}^NR_{ij}^2+\pa{\dfrac{3}{2}\abs{{}^N\ricc}^2-\dfrac{5}{6}{}^NS^2-\dfrac{1}{6}\Delta_N{}^NS}\delta_{ij}:
		\notag
	\end{align}
	\normalsize
	if $\nabla\bach\equiv 0$, by Theorem \ref{splittingbach}
	we know that there exists $c\in\RR$ such that 
	$B_{ij}=c\delta_{ij}$ and $B_{11}=-3c$,
	which, by \eqref{bach13comp} and the fact that 
	$\abs{{}^N\operatorname{E}}^2=\abs{{}^N\ricc}^2-(1/3){}^NS^2$,
	implies
	\begin{equation} \label{criticaleqft}
		\Delta_N{}^NR_{ij}-\dfrac{1}{3}{}^NS_{ij}+
		\dfrac{7}{3}{}^NS\cdot {}^NR_{ij}-4{}^NR_{ij}^2+
		\pa{\dfrac{4}{3}\abs{{}^N\ricc}^2-\dfrac{7}{9}{}^NS^2-\dfrac{2}{9}\Delta_N{}^NS}\delta_{ij}=0.
	\end{equation}
	Now, a simple inspection of \eqref{criticaleqft} (see e.g. \cite{besse, catinocalcvar, catmastrmont}) shows the following
	(see Theorem \ref{criticalequivalenceintro} in the Introduction)
	\begin{theorem} \label{criticalequivalence}
		Let $(N,g_N)$ be a complete Riemannian three-manifold. Then, the product metric $g=g_{\SS^1}\oplus g_N$ defined on $M=\mathbb{S}^1\times N$ is Bach-parallel if and only if $g_N$ is critical for the Riemannian functional 
		\[
		\mathfrak{F}_{-1/3}(h)=\int_N\abs{{}^N\ricc_h}_h^2 \, dV_h -\dfrac{1}{3}\int_N{}^NS_h^2dV_h. 
		\]
	\end{theorem}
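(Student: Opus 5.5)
The plan is to compare two tensor equations on $N$: the Bach-parallel condition, rewritten through the splitting theorem, and the Euler--Lagrange equation of $\mathfrak{F}_{-1/3}$.

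First, suppose $g=g_{\SS^1}\oplus g_N$ is Bach-parallel. The $\SS^1$ factor carries a parallel unit field $e_1$, so the holonomy preserves the splitting $\RR e_1\oplus TN$. Since $\bach$ is parallel, Theorem \ref{splittingbach} (in its local version, passing to the universal cover $\RR\times\tilde N$ if needed) gives $B_{1i}=0$, $B_{ij}=c\,\delta_{ij}$ on the $N$-directions, and $B_{11}=-3c$ by trace-freeness. A priori $\bach\restrict{TN}$ could split further into several constants on subfactors of $N$. I would rule this out using $B_{11}=-3c$ together with \eqref{bach13comp}. Alternatively, and more robustly, I would not insist on the decomposition: parallelism of $\bach$ on a product where all components depend only on $N$ reduces to ${}^N\nabla$ of the $N$-block vanishing and $B_{11}$ being constant. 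In either case, substituting into \eqref{bach13comp} and eliminating $c$ between $2B_{11}=-6c$ and $2B_{ij}=2c\delta_{ij}$ yields exactly \eqref{criticaleqft}. Here one uses $\abs{{}^N\operatorname{E}}^2=\abs{{}^N\ricc}^2-\frac13 {}^NS^2$.

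Second, I would recall the gradient of $\mathfrak{F}_t$ in dimension three from \cite{besse, catinocalcvar}:
\[
\nabla\mathfrak{F}_t=-\Delta\ricc+(1+2t)\nabla^2S-\tfrac{1+4t}{2}(\Delta S)g+2t\,S\ricc+2\riem\ast\ricc-\tfrac12\pa{\abs{\ricc}^2+tS^2}g.
\]
In dimension three I would express $R_{ikjl}R_{kl}$ through the Ricci tensor using $\weyl=0$, via \eqref{riemdec}. This produces terms of the form ${}^NR^2_{ij}$, $S\,R_{ij}$, $\abs{\ricc}^2\delta_{ij}$ and $S^2\delta_{ij}$. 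Setting $t=-1/3$ gives $1+2t=1/3$, which matches the coefficient $-\frac13{}^NS_{ij}$, and $(1+4t)/2=-1/6$. Then I would check term by term that the left-hand side of \eqref{criticaleqft} equals $-\nabla\mathfrak{F}_{-1/3}+\kappa\,(\operatorname{tr}\nabla\mathfrak{F}_{-1/3})\,g$ for a suitable constant $\kappa$. Equivalently, \eqref{criticaleqft} says that $\nabla\mathfrak{F}_{-1/3}=c'\,g_N$ with $c'$ proportional to $c$. The constancy of $c'$ comes from the constancy of $c$.

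Third, for the converse, I would start from $\nabla\mathfrak{F}_{-1/3}=c'g_N$ with $c'$ constant, which is the meaning of criticality with volume constraint. Reversing the algebra gives $B_{ij}=c\,\delta_{ij}$, and tracing gives $B_{11}=-3c$, using trace-freeness of $\bach$ and $B_{1i}=0$ for products. Hence $\bach=c\,(-3\,g_{\SS^1}\oplus g_N)$, which is parallel because both factor metrics are parallel.

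The main obstacle is the bookkeeping in the second step: matching the coefficients $7/3$, $-4$, $4/3$, $-7/9$, $-2/9$. In particular one has to see that the trace part of \eqref{criticaleqft} is consistent, namely that the trace of $\nabla\mathfrak{F}_{-1/3}$ equals $-\frac12\Delta S\cdot(\ldots)$ plus quadratic terms. This is where $t=-1/3$ is forced, and I would verify it by taking the trace of \eqref{criticaleqft} first.
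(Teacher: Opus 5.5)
Your route is the paper's: you read off $\nabla\mathfrak{F}_{-1/3}$ from the $N$-block of $\bach_g$ in \eqref{bach13comp} and then use Theorem \ref{splittingbach}. Your converse direction is fine. The forward direction, however, fails exactly where you flagged it, and it cannot be repaired. Ruling out a finer splitting ``using $B_{11}=-3c$'' is circular, because $B_{11}=-3c$ follows from trace-freeness only once you already know $B_{ij}=c\,\delta_{ij}$ on all of $TN$. Your alternative yields only that $B_{11}$ is constant and that the $N$-block is ${}^N\nabla$-parallel. On a reducible $N$ this does not force proportionality to $g_N$.

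In fact the implication is false as literally stated. Take $N=\RR\times\SS^2$, or the closed $N=\SS^1\times\SS^2$, with the round unit sphere. Then $M$ is a flat surface times a round sphere, i.e. the Bach-parallel case $K_1=0\neq\pm K_2$ of Theorem \ref{bachpar22}. $M$ is locally symmetric, hence Bach-parallel, with $\bach_g=\operatorname{diag}(-\tfrac16,-\tfrac16,\tfrac16,\tfrac16)$. On the other hand, $\nabla\mathfrak{F}_{-1/3}(g_N)=-2\,\bach_g\restrict{N}=\operatorname{diag}(\tfrac13,-\tfrac13,-\tfrac13)$ is not a constant multiple of $g_N$, so $g_N$ is not critical even under a volume constraint.

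The paper's proof makes the same tacit assumption: when it writes $B_{ij}=c\delta_{ij}$, $B_{11}=-3c$, it takes the splitting from Theorem \ref{splittingbach} to be exactly $\SS^1\times N$. What is actually proved is the formulation after Theorem \ref{criticalequivalenceintro}: $g_N$ is critical if and only if $\bach_g$ is parallel \emph{and} $\bach_g\restrict{N}$ is proportional to $g_N$. You need to put this into the hypothesis. Alternatively, assume $N$ has irreducible restricted holonomy. Then the eigenspaces of the parallel $N$-block are parallel distributions, hence trivial, and your argument goes through.

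Separately, the gradient you quote has the wrong sign on every zeroth-order term. Reading $\riem\ast\ricc$ as $R_{ikjl}R_{kl}$, it should be
\[
\nabla\mathfrak{F}_t=-\Delta\ricc+(1+2t)\nabla^2S-\tfrac{1+4t}{2}(\Delta S)\,g-2t\,S\ricc-2R_{ikjl}R_{kl}+\tfrac12\pa{\abs{\ricc}^2+tS^2}g.
\]
The scaling identity $\int\operatorname{tr}\nabla\mathfrak{F}_t\,dV=\frac{n-4}{2}\mathfrak{F}_t$ on a closed $n$-manifold confirms this; your version gives $\frac{4-n}{2}\mathfrak{F}_t$. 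With your signs the term-by-term comparison with \eqref{criticaleqft} would fail. The $S\ricc$ and $\ricc^2$ terms come out with the wrong sign relative to $\Delta\ricc$, and no choice of $\kappa$ can absorb a trace-free discrepancy.

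With the correct signs, use the three-dimensional identity $R_{ikjl}R_{kl}=\frac32 SR_{ij}-2R^2_{ij}+(\abs{\ricc}^2-\frac12S^2)\delta_{ij}$. You then get exactly the paper's identity $\nabla\mathfrak{F}_{-1/3}=-2\,\bach_g\restrict{N}$, together with $\operatorname{tr}\nabla\mathfrak{F}_{-1/3}=-\frac16\Delta S-\frac12\abs{\operatorname{E}}^2=2B_{11}$. Hence $\kappa=\frac13$, and \eqref{criticaleqft} says precisely that the trace-free part of $\nabla\mathfrak{F}_{-1/3}$ vanishes.
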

	The statement of Theorem \ref{criticalequivalence} needs
	some more detailed explanation; for the sake of simplicity, we will drop the superscript $N$ from the curvature components, unless there is some ambiguity. By Theorem \ref{splittingbach}, the Bach-parallel hypothesis for such a product metric can be rephrased as 
	\[
	\bach_g\restrict{\SS^1}=-3c\cdot g_{\SS^1}, \quad
	\bach_g\restrict{N}=c\cdot g_N
	\]
	for some $c\in\RR$: on the other hand, the gradient
	of the functional $\mathfrak{F}_{-1/3}$ on a three-dimensional manifold is given by 
	(see e.g. \cite{catinocalcvar})
	\[
	\nabla\mathfrak{F}_{-1/3}=-\Delta\ricc+\dfrac{1}{3}\hess(S)-\dfrac{7}{3}S\cdot\ricc+4\ricc\circ\ricc-\pa{
		\dfrac{3}{2}\abs{\ricc}^2-\dfrac{5}{6}S^2-\dfrac{1}{6}\Delta S}g_N,
	\]
	which is exactly the global version of the component 
	$-2B_{ij}$ in \eqref{bach13comp}. We can therefore say the following.
	\begin{itemize}
		\item If $\nabla\mathfrak{F}_{-1/3}=0$, we have that 
		$g_N$ is critical for $\mathfrak{F}_{-1/3}$ with
		respect to all variations, which are taken compactly supported if $N$ is complete non-compact. Therefore, 
		Theorem \ref{criticalequivalence} shows that 
		this is equivalent to $g$ being Bach-flat on $M$. If $N$ is closed, it is well known that this implies that 
		$g_N$ is an Einstein metric, since 
		\[
		0=\operatorname{tr}_{g_N}(\nabla\mathfrak{F}_{-1/3})=-\dfrac{1}{6}\Delta S-\dfrac{1}{2}\abs{\operatorname{E}}^2\Longra 
		0=\int_N\abs{\operatorname{E}}^2dV_{g_N};
		\]
		if $N$ is complete and non-compact, a result due to Anderson assures that $g_N$ is a flat metric, provided that $S\geq 0$ \cite{anderson2}.
		\item If $\nabla\mathfrak{F}_{-1/3}=c\cdot g_N$ for some Lagrange multiplier $c\in\RR\setminus\{0\}$, $g_N$ is critical for $\mathfrak{F}_{-1/3}$ with respect to variations that fix the total volume of $N$, in the closed case, or with respect to compactly supported variations that fix the relative volume, i.e. variations of the form 
		$\bar{g}_t=g_N+th$, where $h\in\operatorname{Sym}^2(T^*N)$ 
		such that 
		\[
		\int_{N}\operatorname{tr}(h)dV_{g_N}=0.
		\]
		In this case, Theorem \ref{criticalequivalence} tells that this is equivalent to $g$ being Bach-parallel and not Bach-flat. In the closed case, some rigidity results under curvature assumptions were obtained, for instance, in 
		\cite{bernardini, catinocalcvar, katagiri, tanno}; to the best of our knowledge, there are no results in the literature that concern variations with fixed relative volume in the complete, non-compact case.
	\end{itemize}
	In the closed case, if one 
	considers variations with fixed volume, it is possible to find non-Einstein critical metrics for $\mathfrak{F}_{-1/3}$:
	for instance, as a direct corollary of the classification
	result in \cite{garciario3homog}, by means of Theorem \ref{criticalequivalence} we can state the following
	\begin{proposition}
		Let $N$ be a simply connected Lie group of dimension $3$, endowed with a left-invariant metric $g_N$. Then the metric $g=g_{\SS^1}\oplus g_N$ on 
		$M=\SS^1\times N$ is Bach-parallel if and only if 
		\begin{enumerate}
			\item $(M,g)$ is locally conformally flat, if $N$ is not unimodular;
			\item $(M,g)$ is $\SS^1\times\SS^3$, $\SS^1\times\RR^3$ or $\SS^1\times SU(2)$, where $\SS^3$ and $\RR^3$ are endowed with their standard metric and $SU(2)$ is endowed with a Berger sphere metric, if $N$ is unimodular.
		\end{enumerate}
	\end{proposition}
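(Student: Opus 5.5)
The statement follows by combining Theorem \ref{criticalequivalence} with the classification of left-invariant critical metrics for the quadratic functionals $\mathfrak{F}_t$ on three-dimensional Lie groups in \cite{garciario3homog}, specialized to $t=-1/3$. By Theorem \ref{criticalequivalence} and the discussion following it, $g=g_{\SS^1}\oplus g_N$ is Bach-parallel if and only if $\nabla\mathfrak{F}_{-1/3}(g_N)=c\cdot g_N$ for some $c\in\RR$. For a left-invariant metric every curvature quantity is constant, so $\Delta\ricc$ and $\hess(S)$ vanish. The critical equation \eqref{criticaleqft} therefore becomes purely algebraic in the Ricci eigenvalues:
\[
\tfrac{7}{3}S\,\ricc-4\ricc\circ\ricc+\pa{\tfrac{3}{2}\abs{\ricc}^2-\tfrac{5}{6}S^2}g_N=-c\,g_N .
\]
This is exactly the homogeneous critical-point equation studied in \cite{garciario3homog}. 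I would check that their normalization of $\mathfrak{F}_t$, with or without the volume constraint, agrees with ours, so that critical points with Lagrange multiplier correspond to $c\neq 0$ and unconstrained ones to $c=0$.

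I would then work through the cases using Milnor frames. If $N$ is non-unimodular, the classification in \cite{garciario3homog} for $t=-1/3$ says the critical left-invariant metrics are the locally conformally flat ones. These include the hyperbolic metric and the metrics of constant negative sectional curvature on the non-unimodular groups, as well as any other left-invariant metrics with vanishing Cotton tensor that appear in their list. I then need to pass from $N$ to $M$. A product $\SS^1\times N$ with $N$ three-dimensional is locally conformally flat if and only if $N$ has constant sectional curvature. So I would verify from the Milnor-frame formulas that the non-unimodular critical metrics with $t=-1/3$ have constant curvature, which yields item (1). If $N$ is unimodular, I would diagonalize the Ricci tensor in a Milnor frame with structure constants $\lambda_1,\lambda_2,\lambda_3$. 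This turns the displayed equation into three polynomial equations in the $\lambda_i$ and $c$, and I would read off the solutions. The round $\SS^3$ and flat $\RR^3$ are Einstein, and Einstein metrics are always solutions. The remaining solutions form a one-parameter family of Berger metrics on $SU(2)$, and for $t=-1/3$ this family gives the non-Einstein example. I must also check that no solutions survive on $\widetilde{SL}(2,\RR)$, $E(1,1)$, $\widetilde{E}(2)$ or the Heisenberg group; this is what \cite{garciario3homog} asserts for the relevant range of $t$.

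The converse direction is immediate. Each listed metric satisfies the algebraic equation, as one checks directly from its Ricci eigenvalues, so by Theorem \ref{criticalequivalence} the product is Bach-parallel.

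I expect the main obstacle to be careful bookkeeping. The critical equation we use must be the same as the one in \cite{garciario3homog}, including the constant-multiplier case and their sign conventions. In the unimodular case, the Berger family must be pinned down precisely, meaning the specific squashing ratio at which $t=-1/3$ is critical. If the citation does not cover $t=-1/3$ cleanly, I would fall back on solving the Milnor-frame polynomial system directly, which is elementary but tedious.
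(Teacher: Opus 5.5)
Your route, Theorem \ref{criticalequivalence} combined with the classification of \cite{garciario3homog} at $t=-1/3$, is the paper's: it states the proposition as a direct corollary of these two facts. However, the reduction you write down is false, and both your fallback computation and your converse check rest on it. The error is that $\Delta\ricc$ does \emph{not} vanish for a left-invariant metric: constant components in a left-invariant frame do not make $\ricc$ parallel. Take a Milnor frame of a unimodular group, with $[e_2,e_3]=\lambda_1e_1$ and cyclically, $\mu_i=\frac12(\lambda_1+\lambda_2+\lambda_3)-\lambda_i$, $\nabla_{e_2}e_3=\mu_2e_1$ and $\ricc=\operatorname{diag}(r_1,r_2,r_3)$. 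Then $\nabla_{e_k}\ricc=[\nabla_{e_k},\ricc]$; for example $(\nabla_{e_2}\ricc)(e_3,e_1)=\mu_2(r_3-r_1)$. Since $\nabla_{e_k}e_k=0$, one gets $\Delta\ricc=\sum_k[\nabla_{e_k},[\nabla_{e_k},\ricc]]$. On compact $SU(2)$, $\Delta\ricc=0$ would force $\nabla\ricc=0$ after integrating against $\ricc$, which fails for every non-round Berger metric. Only $\hess S$ and $\Delta S$ drop out. So the critical equation is algebraic in the structure constants, not in the Ricci eigenvalues alone, and dropping $\Delta\ricc$ changes the answer. For $\lambda_2=\lambda_3\neq\lambda_1$ one finds $\Delta\ricc=\mu_2^2(r_1-r_2)\operatorname{diag}(-4,2,2)$, and the condition becomes $\mu_1=7\mu_2$, i.e.\ $\lambda_2=\lambda_3=4\lambda_1$. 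Your displayed equation would instead select $\mu_1=\frac52\mu_2$, i.e.\ $\lambda_2=\frac74\lambda_1$. Hence the critical Berger metric does not satisfy your displayed equation, and checking it ``directly from its Ricci eigenvalues'' cannot work.

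There is also a hole that you inherit from the paper's one-line argument. The ``only if'' half of Theorem \ref{criticalequivalence} uses $\bach|_N=c\,g_N$. Theorem \ref{splittingbach} gives this when $N$ is irreducible, but not in general. Consider the non-unimodular group $\mathrm{Aff}^+(\RR)\times\RR$ with $[X,Y]=Y$, $Z$ central and $\{X,Y,Z\}$ orthonormal; it is isometric to $\mathbb{H}^2\times\RR$. Then $M=\SS^1\times\RR\times\mathbb{H}^2$ is locally symmetric, hence Bach-parallel, and by \eqref{bach22comp} $\bach=\frac16\pa{g_{\mathbb{H}^2}-g_{\SS^1\times\RR}}\neq0$. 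Yet $M$ is not locally conformally flat, and $\nabla\mathfrak{F}_{-1/3}(g_N)=\frac13\pa{g_{\RR}-g_{\mathbb{H}^2}}$ is not a multiple of $g_N$. This $g_N$ is conformally flat as a $3$-manifold but not critical, so your reading ``critical $=$ locally conformally flat'' of the non-unimodular classification is also off. Reducible $N$ must be handled separately, via Theorem \ref{bachpar22}. Item (1) as stated needs the exception $N\cong\mathbb{H}^2\times\RR$, which your planned check of the critical non-unimodular metrics cannot detect.
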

	\noindent
	We recall that Berger sphere metrics are constructed by 
	rescaling the circle fibers of the Hopf fibration 
	$SU(2)\to \SS^2$ and they are, therefore, non Einstein in general (see e.g. \cite[Section 4.4.3]{petersen}); other 
	critical metrics for $\mathfrak{F}_{-1/3}$ on $SU(2)$ 
	were constructed, for instance, in \cite{sekigawa}.
	
	When we consider critical metrics for $\mathfrak{F}_{-1/3}$ on $N$, both for the unnormalized functional and for compactly supported variations preserving the relative volume, 
	we can provide a rigidity result (Theorem \ref{thmFintro}) under suitable assumptions on the volume growth of $N$, on the Ricci tensor of $g_N$ and its covariant derivative.
	\begin{proof}[Proof of Theorem \ref{thmFintro}]
		Since $g_N$ is critical for 
		$\mathfrak{F}_{-1/3}$, by Theorem \ref{criticalequivalence} and \eqref{bach13comp} we immediately get 
		\begin{equation} \label{pdescalar}
			\Delta S+3\abs{E}^2=18c
		\end{equation}
		and, by the assumption on $\abs{\ricc}^2$, we obtain
		\[
		\Delta S\geq S^2 \quad \mbox{ on } N;
		\]
		since $\abs{\ricc}$ is bounded on $N$, we also obtain that,
		in particular, $\ricc\geq -K\cdot g$ for some positive constant $K$, which implies that the Omori--Yau maximum principle for $\Delta$ holds on $N$. Since $S$ is bounded above by hypothesis, denoting by $S^*:=\sup_N S$ we know that there exists a sequence $\{p_j\}\subset N$ such that
		\[
		S(p_j)\geq S^*-\dfrac{1}{j}, \quad \Delta S(p_j)\leq \dfrac{1}{j}
		\]
		for every $j$: hence, we immediately obtain
		\[
		\dfrac{1}{j}\geq \Delta S(p_j)\geq S^2(p_j) \Longra 
		S^*=0,
		\]
		which means that $S\leq 0$ on $N$. 
		
		Now, let $u:=-S\geq 0$: by the previous discussion, 
		$u$ is a non-negative solution of 
		\[
		\Delta u+ u^2\leq 0.
		\]
		The volume growth hypothesis allows us to 
		use a result due to Grigor'yan and Sun \cite{grigsun} in order to conclude that $u=S\equiv 0$: furthermore, by
		\eqref{pdescalar}, we conclude that 
		$\abs{\ricc}^2=\abs{\operatorname{E}}^2$ is a constant
		function on $N$.
		Exploiting 
		\eqref{criticaleqft}, we get
		\begin{equation} \label{newcritequation}
			\Delta R_{ij}=4\pa{R_{ij}^2-\dfrac{1}{3}\abs{\ricc}^2\delta_{ij}}.
		\end{equation}
		At every $p\in N$, we choose a local orthonormal frame
		in an open neighborhood of $p$ which diagonalizes
		$\ricc$ at $p$ and we denote by
		\[
		\lambda_1\geq \lambda_2\geq\lambda_3
		\]
		the eigenvalues of $\ricc$: since $\ricc$ is trace-free, we have that $\lambda_1\geq 0$ and $\lambda_3\leq 0$ for every $p\in N$. First, we claim that 
		either $\lambda_1>0$ at every point or $g_N$ is flat:
		indeed, since $\abs{\ricc}^2$ is constant on $N$, 
		if $\lambda_1=0$ at some point $p\in N$, then
		$\lambda_2=\lambda_3=0$, since $\ricc$ is trace-free,
		which would imply that $\abs{\ricc}^2=0$ at $p$ and, hence,
		on $N$. Therefore, we assume that $g_N$ is not flat
		and, at every $p\in N$, we define the function
		\[
		\rho(p)\coloneqq\dfrac{\lambda_2(p)}{\lambda_1(p)}.
		\]
		Following the computations carried out in 
		\cite{catinodameno}, we can easily observe that 
		\[
		-\dfrac{1}{2}\leq \rho\leq 1 \quad \mbox{ on } N
		\]
		and that 
		\begin{align} \label{eigenvalueequations}
			\lambda_2&=\rho\lambda_1, \quad \lambda_3=-(1+\rho)\lambda_1,\\
			\abs{\ricc}^2&=2(1+\rho+\rho^2)\lambda_1^2, \notag
		\end{align}
		which imply
		\begin{equation} \label{determinantequation}
			\det(\ricc)=-\rho(1+\rho)\lambda_1^3=-\dfrac{\rho(1+\rho)}{\sq{2(1+\rho+\rho^2)}^{3/2}}\abs{\ricc}^3.
		\end{equation}
		We can contract \eqref{newcritequation} with
		$R_{ij}$ and in order to obtain 
		\[
		R_{ij}\Delta R_{ij}=4\operatorname{tr}(\ricc^3);
		\]
		since $\abs{\ricc}^2$ is constant and $\ricc$ is trace-free, by the standard Weitzenb\"{o}ck formula for tensors and \eqref{determinantequation}
		we get
		\begin{equation} \label{equalityriccnablaricc}
			\abs{\nabla\ricc}^2=-12\det(\ricc)=\dfrac{12\rho(1+\rho)}{\sq{2(1+\rho+\rho^2)}^{3/2}}\abs{\ricc}^3.
		\end{equation}
		As in \cite{catinodameno}, we observe that the function 
		\[
		F(\rho)\coloneqq\dfrac{\rho(1+\rho)}{\sq{2(1+\rho+\rho^2)}^{3/2}}
		\]
		is non-decreasing for $\rho\in [-1/2,1]$: our goal
		is to show that there exists $\alpha>0$ such that
		\[
		\rho\geq\alpha, \quad F(\rho)> 0 \quad \mbox{on } N.
		\] 
		This will allow us to show that $(N,g_N)$ has finite volume, which, adapting a technique appearing in \cite{catinodameno} (also derived from the work of LeBrun \cite{lebrun}) and using a capacity argument, will lead us to a contradiction, proving that $g_N$ must be flat, up to a double cover of $N$.
		\begin{itemize}
			\item[\textbf{Step 1.}] First, we prove that, since 
			$\abs{\ricc}\neq 0$ on $N$, there exists no open subset $U$
			of $N$ such that $\lambda_1=\lambda_2$ on $U$. 
			By contradiction, assume that such $U$ exists, 
			which implies 
			that $\lambda_3$ is simple on $U$ and that
			$\lambda_i\in C^{\infty}(U)$ for every $i=1,2,3$.
			Up to choosing $\tilde{U}\subset U$, we can assume that
			$U$ is a local chart of $N$: moreover, since the
			multiplicities of the eigenvalues are constant on $U$, 
			there exists a local orthonormal frame $\{e_1,e_2,e_3\}$
			with respect to which $\ricc$ is diagonal on $U$
			(we choose it in such a way that $e_1,e_2$ are eigenvectors
			for $\lambda_1$ and $e_3$ is an eigenvector for $\lambda_3$). Since $\ricc$ is trace-free, we obtain
			\[
			\lambda_1=\lambda_2=:\lambda, \quad \lambda_3=-2\lambda
			\]
			on $U$ and a simple computation shows that 
			\[
			\lambda=\dfrac{\abs{\ricc}}{\sqrt{6}},
			\]
			which implies that $\lambda$ is constant on $U$. By the
			standard formula for covariant derivatives of tensors,
			this yields $R_{33,k}=0$ for every $k$ on $U$: exploiting this fact, we can differentiate again and compute
			\[
			R_{33,kl}=-2R_{p3,k}\omega_3^p(e_l),
			\]
			where $\omega_j^i$ are the Levi--Civita connection forms
			associated to $\{e_1,e_2,e_3\}$. We observe that, since $\lambda$ is constant, if $p\neq 3$
			\begin{equation} \label{covderriccilambda}
				R_{p3,k}=e_k(R_{p3})-R_{q3}\omega_p^q(e_k)-R_{pq}\omega_3^q(e_k)=
				-3\lambda\omega_3^p(e_k),
			\end{equation}
			which implies that
			\[
			\Delta R_{33}=6\lambda\omega_3^p(e_l)\omega_3^p(e_l).
			\]
			Using \eqref{newcritequation} and the fact that
			$\abs{\ricc}^2=6\lambda^2$, we obtain
			\[
			\omega_3^p(e_l)\omega_3^p(e_l)=\dfrac{4}{3}\lambda;
			\]
			by recalling the definition of Levi--Civita connection forms, this is equivalent to
			\begin{equation} \label{normeigenvector}
				\abs{\nabla e_3}^2=\dfrac{4}{3}\lambda.
			\end{equation}
			Furthermore, since $S\equiv 0$, we know that $\ricc$ is
			divergence-free: a straightforward computation shows
			that
			\begin{align*}
				0&=R_{i1,i}=R_{31,3}=-3\lambda\omega_3^1(e_3) \Longra \omega_3^1(e_3)=0; \\
				0&=R_{i2,i}=R_{32,3}=-3\lambda\omega_3^2(e_3) \Longra \omega_3^2(e_3)=0; \\
				0&=R_{i3,i}=R_{13,1}+R_{23,2}=-3\lambda\sq{\omega_3^1(e_1)+\omega_3^2(e_2)}\Longra \omega_3^1(e_1)+\omega_3^2(e_2)=0;
			\end{align*}
			this immediately implies that
			\begin{equation} \label{vanishingdere3}
				\nabla_{3}e_3=\omega_3^k(e_3)e_k=0, \qquad 
				\diver e_3=\langle\nabla_{i}e_3,e_i\rangle=
				\omega_3^i(e_i)=0.
			\end{equation}
			Using \eqref{vanishingdere3}, it is easy to derive
			\begin{align*}
				0&=e_3(\diver(e_3))=\langle\nabla_{3}\nabla_{i}e_3,e_i\rangle+\langle\nabla_{i}e_3,\nabla_{3}e_i\rangle=\\
				&=-\langle\nabla_{\nabla_{i}e_3}e_3,e_i\rangle+
				\langle\nabla_{\nabla_{3}e_i}e_3,e_i\rangle-\langle R(e_i,e_3)e_3,e_i\rangle+\langle\nabla_{i}e_3,\nabla_{3}e_i\rangle=\\
				&=-\omega_3^k(e_i)\omega_3^i(e_k)+\omega_i^k(e_3)\omega_3^i(e_k)-R_{33}+\omega_3^k(e_i)\omega_i^k(e_3)=\\
				&=-\omega_3^k(e_i)\omega_3^i(e_k)-R_{33},
			\end{align*}
			i.e. 
			\[
			-2\lambda=R_{33}=-\omega_3^k(e_i)\omega_3^i(e_k).
			\]
			Now, we can see that 
			\begin{align*}
				\abs{\nabla e_3}^2&=\omega_3^k(e_i)\omega_3^k(e_i)=
				\dfrac{1}{2}\sq{\omega_3^k(e_i)\omega_3^k(e_i)+\omega_3^i(e_k)\omega_3^i(e_k)}=\\
				&=\dfrac{1}{2}\sq{\omega_3^k(e_i)-\omega_3^i(e_k)}\sq{\omega_3^k(e_i)-\omega_3^i(e_k)}+
				\omega_3^k(e_i)\omega_3^i(e_k)\geq 
				\omega_3^k(e_i)\omega_3^i(e_k),
			\end{align*}
			i.e., by \eqref{normeigenvector},
			\[
			\dfrac{4}{3}\lambda=\abs{\nabla e_3}^2\geq \omega_3^k(e_i)\omega_3^i(e_k)=2\lambda,
			\]
			which is impossible, since $\lambda\neq 0$. 
			
			\item[\textbf{Step 2.}] By the previous step, we know that
			there exists $p\in N$ such that $\lambda_1$ is simple at $p$ and, hence, in an open neighborhood $U$ of $p$. 
			We choose a local orthonormal frame $\{e_1,e_2,e_3\}$
			on $U$ such that $e_1$ is an eigenvector for $\lambda_1$
			in $U$ and that $\ricc$ is diagonal at $p$. 
			By differentiating $R_{11}=\lambda_1$, we obtain
			\[
			R_{11,k}=e_k(\lambda_1)
			\]
			and, taking another covariant derivative, we obtain
			\begin{align*}
				R_{11,kl}&=e_l(e_k(\lambda_1))-2R_{s1,k}\omega_1^s(e_l)-
				R_{11,s}\omega_k^s(e_l)=\\
				&=(\lambda_1)_{kl}-2R_{s1,k}\omega_1^s(e_l);
			\end{align*}
			we observe that, at $p$,
			\[
			R_{s1,k}=(\lambda_s-\lambda_1)\omega_s^1(e_k), 
			\]
			which means that, since $\lambda_1$ is the largest eigenvalue,
			\[
			\Delta R_{11}=\Delta\lambda_1+2\sum_{s\neq 1}(\lambda_s-\lambda_1)\omega_s^1(e_k)\omega_s^1(e_k)\leq \Delta\lambda_1
			\]
			(in the formula for $R_{s1,k}$ we are not summing over $s$).
			Combining this with \eqref{newcritequation}, we obtain
			\begin{equation} \label{diffineqeigen}
				\Delta\lambda_1\geq 4\pa{\lambda_1^2-\dfrac{1}{3}\abs{\ricc}^2}
			\end{equation}
			at every point where $\lambda_1$ is simple. 
			
			We claim that
			\[
			\lambda^*:=\sup_N\lambda_1\leq\dfrac{\abs{\ricc}}{\sqrt{3}}:
			\]
			this will give us the desired lower bound on $\rho$. 
			By contradiction, assume that $\lambda^*>\abs{\ricc}/\sqrt{3}$: by Step 1., we 
			know that, at every point $p'$ such that $\lambda_1(p')=\lambda_2(p')$, 
			\[
			\lambda_1(p')=\dfrac{\abs{\ricc}}{\sqrt{6}}<\lambda^*.
			\]
			We point out that, at $p'$, $\lambda_1$ is, in general, only a Lipschitz function: however, by the previous observation, there exists no sequence of points $\{p_j\}\subset N$
			such that $\lambda_1(p_j)$ has multiplicity two and 
			$\lambda_1(p_j)\to\lambda^*$ as $j\to +\infty$. 
			In order to overcome the lack of smoothness, we define 
			a smooth truncation of $\lambda_1$ as follows: let 
			$a_1,a_2\in\mathbb{R}$ such that 
			\[
			\dfrac{\abs{\ricc}}{\sqrt{6}}<a_1<\dfrac{\abs{\ricc}}{\sqrt{3}}<a_2<\lambda^*
			\]
			and let $\eta\in C^{\infty}(\RR)$ such that 
			\[
			\begin{cases}
				\eta(x)=0, \quad &\mbox{for every } x\leq a_1\\
				\eta(x)=x+C, \quad &\mbox{ for every } x\geq a_2\\
				\eta'\geq 0, \quad &\mbox{ on } \RR 
			\end{cases}
			\]
			for some $C\in\RR$. Now, we define the function $f$ on $N$ as
			\[
			f:=\eta(\lambda_1):
			\]
			it is clear that, by definition of $\eta$, $f\in C^{\infty}(N)$ and,
			since $\lambda_1$ is bounded above on $N$, so is $f$.
			Hence, we can apply the Omori--Yau maximum principle:
			there exists $\{p_j\}\subset N$ such that
			\[
			f(p_1)\to \sup_Nf, \quad \Delta f(p_j)\leq \dfrac{1}{j}.
			\]
			However, there exists $j_0$ such that, for every $j\geq j_0$, $\lambda_1(p_j)\geq a_2$, which means that
			$\Delta f(p_j)=\Delta\lambda_1(p_j)$: we stress the fact that, for every $j\geq j_0$, $\lambda_1>a_1$, which implies that, at $p_j$, $\lambda_1$ is a simple eigenvalue and, therefore, for every $p_j$, there exists an open neighborhood of $p_j$ on which $\lambda_1$ is smooth. Now, 
			by \eqref{diffineqeigen}, we conclude that
			\[
			\dfrac{1}{j}\geq \Delta\lambda_1(p_j)\geq 4
			\pa{\lambda_1^2(p_j)-\dfrac{\abs{\ricc}^2}{3}}\Longra 
			(\lambda^*)^2\leq \dfrac{\abs{\ricc}^2}{3},
			\]
			which is impossible and, therefore,
			the claim is proven. 
			
			By \eqref{eigenvalueequations}, we obtain
			\[
			\abs{\ricc}^2=2(1+\rho+\rho^2)\lambda_1\leq2(1+\rho+\rho^2)\cdot\dfrac{\abs{\ricc}^2}{3} \Longra 2\rho^2+2\rho-1\geq 0
			\Longra \rho\geq\dfrac{\sqrt{3}-1}{2};
			\]
			since we know that $F(\rho)$ is non-decreasing, 
			we get that 
			\[
			F(\rho)\geq F\pa{\dfrac{\sqrt{3}-1}{2}}=\dfrac{\sqrt{3}}{18};
			\]
			by \eqref{equalityriccnablaricc}, we deduce that 
			\begin{equation} \label{lowerboundricci}
				\abs{\nabla\ricc}^2\geq \dfrac{2}{\sqrt{3}}\abs{\ricc}^3.
			\end{equation}
			If we assume that $\mathrm{Vol}_{g_N}(N)=+\infty$,
			by the fact that $\abs{\nabla\ricc}\in L^2(N)$ we obtain
			\[
			+\infty>\int_N\abs{\nabla\ricc}^2dV_{g_N}\geq 
			\dfrac{2}{\sqrt{3}}\int_N\abs{\ricc}^3 dV_{g_N}=+\infty,
			\]
			which is impossible, since $g_N$ is not flat, which means that 
			$\mathrm{Vol}_{g_N}(N)$ is finite.
			\item[\textbf{Step 3.}] By the conclusion of Step 2, since $(N,g_N)$ has finite volume, we have that $(N,g_N)$ is also parabolic (see e.g. \cite[Corollary 1]{chengyau}). We want to use one particular characterization of parabolicity
			\cite[Theorem 5.1]{grigbook}: indeed, we know that $(N,g_N)$ is parabolic if and only if every compact subset 
			$K$ of $N$ has zero capacity, i.e. 
			\[
			\inf_{\phi\in \mathscr{L}(K)}\int_N\abs{\nabla\phi}^2dV_{g_N}=0, \qquad
			\mathscr{L}(K):=\set{\phi\in Lip_c(N): 0\leq\phi\leq 1, \mbox{ } \phi\restrict{K}\equiv 1}. 
			\]
			In particular, if we define an exhaustion by compact sets 
			of $N$, i.e. 
			\[
			N=\bigcup_jK_j, \quad K_j\subset K_{j+1},
			\]
			parabolicity implies that there exists a sequence 
			$\{\phi_j\}$ such that $\phi_j\in\mathscr{L}(K_j)$ and 
			\[
			\int_N\abs{\nabla\phi_j}^2dV_{g_N}\leq \dfrac{1}{j}
			\]
			for $j$ sufficiently large; in order to conclude, 
			we adapt arguments used in \cite{catinodameno} and inspired by
			methods developed in \cite{lebrun}. 
			
			First, arguing as in Step 1, we can immediately see that
			the smallest eigenvalue $\lambda_3$ of $\ricc$ must be negative everywhere, otherwise there would be a point where 
			$\ricc\equiv 0$: furthermore, by \eqref{equalityriccnablaricc},
			we have that $\det(\ricc)\leq 0$ on $N$, which implies that 
			$\lambda_3$ is a simple eigenvalue on $N$ and, hence, $\lambda_3\in C^{\infty}(N)$. This means that the eigenspace of $\lambda_3$ varies smoothly from point to point, i.e. the eigenspace determines a smooth, real line bundle $L$ on $N$. 
			At every $p\in N$, we have that there exist exactly two 
			unit eigenvectors $v$ and $-v$ associated to $\lambda_3$:
			therefore, up to passing to the double cover
			\[
			\tilde{N}:=\{w\in L: \abs{w}=1\}
			\]
			of $N$, we can assume that $L$ is a trivial line bundle and,
			hence, that there exists a global eigenvector $v$ for $\lambda_3$ on $\tilde{N}$ such that $\abs{v}=1$ with respect to the pullback metric $\pi^*(g_N)$ on $\tilde{N}$. It is clear that the double cover $\tilde{N}$ also has finite volume, which means that $(\tilde{N},\pi^*(g_N))$ is parabolic;
			from now on, for the sake of simplicity, we identify $\tilde{N}$ with $N$ and $\pi^*(g_N)$ with $g_N$. 
			
			We recall that, given $\alpha\in\Lambda^1N$, the 
			Bochner--Weitzenb\"{o}ck formula for 1-forms reads
			\begin{equation} \label{weitzenforms}
				\dfrac{1}{2}\Delta\abs{\alpha}^2=\abs{\nabla\alpha}^2-\langle\Delta_H\alpha,\alpha\rangle+\langle\ricc(\alpha),\alpha\rangle,
			\end{equation}
			where $\Delta_H\alpha=(d\delta+\delta d)\alpha$ is the Hodge Laplacian, $d$ is the exterior derivative and $\delta$ is its adjoint (see e.g. \cite{libook}); here, we are identifying the Ricci tensor with a linear, self-adjoint operator acting on $\Lambda^1N$, also denoted by $\ricc$. 
			Let $\eta$ be the unique $1$-form such that $\eta(w)=g_N(v,w)$, i.e. $\eta=v^{\flat}$: we know that $\eta$ is globally defined and, by the previous identification, we get
			\[
			\ricc(\eta)=\lambda_3\eta.
			\]
			From now on, whenever we want to carry out local computations, we choose a local orthonormal frame $\{e_1,e_2,e_3\}$ such that $e_3=v$: the dual coframe will then be $\{\omega^1,\omega^2,\omega^3=\eta\}$, with connection forms
			$\{\omega_j^i\}$.
			We integrate \eqref{weitzenforms} with $\alpha=\phi_j\eta$ to obtain
			\[
			0=\int_N\sq{\abs{\nabla(\phi_j\eta)}^2-\phi_j\langle\Delta_H(\phi_j\eta),\eta\rangle+\phi_j^2\lambda_3}dV_{g_N};
			\]
			we observe that, since $\eta=\omega^3$ and the $\omega_j^i$'s are skew-symmetric,
			\[
			\langle\nabla_k\eta,\eta\rangle=-\langle\omega_j^3(e_k)\omega^j,\omega^3\rangle=0,
			\]
			which means that, since $\abs{\eta}=1$,
			\[
			\abs{\nabla(\phi_j\eta)}^2=
			\abs{\nabla\phi_j}^2+\phi_j^2\abs{\nabla\eta}^2.
			\]
			Hence, by definition of $\Delta_H$, we obtain
			\[
			\int_N\abs{\nabla\phi_j}^2+\phi_j^2\lambda_3+\phi_j^2\abs{\nabla\eta}^2-\phi_j\pa{\abs{d\eta}^2+\abs{\delta\eta}^2}dV_{g_N}=0;
			\]
			recalling that $\abs{\nabla\eta}=\abs{\nabla v}$, we finally get
			\begin{equation} \label{fundamentalineq}
				\int_N\pa{-\phi_j^2\lambda_3-\phi_j^2\abs{\nabla v}^2}dV_{g_N}\leq\int_N\abs{\nabla\phi_j}^2dV_{g_N}.
			\end{equation}
			Now, we want to suitably estimate the second term in the left-hand side of \eqref{fundamentalineq}: by the computations
			performed in Step 2, we know that
			\[
			R_{i3,k}=(\lambda_3-\lambda_i)\omega_3^i(e_k)
			\]
			(we are not summing over $i$) and, hence, we can write
			\begin{align*}
				\abs{\nabla\ricc}^2&\geq 2\sum_k\sq{(R_{13,k})^2+(R_{23,k})^2}=\\
				&=2\sum_k[(\lambda_1-\lambda_3)^2(\omega_3^1(e_k))^2+
				(\lambda_2-\lambda_3)^2(\omega_3^1(e_k))^2].
			\end{align*}
			First, we note that, if $\lambda_2=0$ at $p$, 
			by \eqref{equalityriccnablaricc} we obtain $\abs{\nabla\ricc}^2=0$ at $p$, but this is impossible, 
			by the validity of \eqref{lowerboundricci}: hence, 
			we assume that $\lambda_2>0$ on $N$. 
			Since $\lambda_1\geq\lambda_2>0$ and $\lambda_3<0$, we
			immediately get that $(\lambda_1-\lambda_3)^2\geq (\lambda_2-\lambda_3)^2$, i.e.
			\[
			\abs{\nabla\ricc}^2\geq 2(\lambda_2-\lambda_3)^2\abs{\nabla v}^2,
			\]
			which comes from $\nabla_kv=\nabla_ke_3=\omega_3^i(e_k)e_i$. 
			Since $\lambda_2\neq\lambda_3$ on $N$ and $\ricc$ is trace-free, by \eqref{equalityriccnablaricc} we obtain
			\begin{equation*}
				\abs{\nabla v}^2\leq -\dfrac{6\lambda_1\lambda_2\lambda_3}{(\lambda_2-\lambda_3)^2}=-\dfrac{6\lambda_1\lambda_2\lambda_3}{(\lambda_1+2\lambda_2)^2};
			\end{equation*}
			now, we use the fact that $(\lambda_1+2\lambda_2)^2=
			(\lambda_1-2\lambda_2)^2+8\lambda_1\lambda_2$ to get
			the desired estimate
			\begin{equation} \label{estimate1}
				\abs{\nabla v}^2\leq -\dfrac{3}{4}\lambda_3.
			\end{equation}
			Finally, since $\lambda_3$ is a globally defined, smooth, negative function on $M$, inserting \eqref{estimate1} into \eqref{fundamentalineq} and exploiting the properties
			of the $\phi_j$'s, by the Monotone Convergence Theorem we obtain
			\[
			\dfrac{1}{j}\geq\int_N\abs{\nabla\phi_j}^2dV_{g_N}\geq 
			-\dfrac{1}{4}\int_N\phi_j^2\lambda_3\,dV_{g_N}\geq
			\dfrac{1}{4}\int_{K_j}-\lambda_3\, dV_{g_N}
			\underset{j\to+\infty}{\longra}\dfrac{1}{4}\int_N-\lambda_3\,dV_{g_N}>0,
			\]
			which is impossible and this concludes the proof. 
		\end{itemize}
	\end{proof}
	\begin{rem}\label{remvolgr}
		Let $(N,g_N)$ be a complete Riemannian manifold and consider the elliptic PDE 
		\[
		\Delta w+3\abs{\ricc}^2-w^2=c;
		\]
		as pointed out to us by Farina, if one drops the
		volume growth hypothesis we assumed to apply Grigor'yan--Sun's result, one might not be able to conclude that
		$w\equiv 0$. Indeed, if we consider $\mathbb{H}^3$ endowed
		with the standard hyperbolic metric
		(which has exponential volume growth), if $c=36$ and $u=-w$, we have
		\[
		-\Delta u=u^2
		\]
		on $\mathbb{H}^3$, since $\abs{\ricc}^2=12$: then, a result due to Mancini and Sandeep assures the existence
		of a positive, smooth solution $u$ 
		\cite[Theorem 1.4]{mancinisandeep}. Furthermore, 
		Grigor'yan and Sun construct counterexamples which show
		that the volume growth assumption is sharp, proving the
		existence of positive solution to the inequality
		\[
		\Delta u+u^\sigma\leq 0
		\]
		if one only imposes the weaker condition $\mathrm{Vol}(B_R)\leq C\cdot R^p\ln^qR$, 
		with either $p>2\sigma/(\sigma-1)$ or 
		$p=2\sigma/(\sigma-1)$ and $q>1/(\sigma-1)$
		(recall that, in our case, $\sigma=2$) \cite{grigsun}.
	\end{rem}
	\subsection{A rigidity result in the Cotton-flat case}
	Finally, we show that the vanishing of the Cotton tensor of a Bach-parallel, non-Bach-flat metric rules out the cases where $M\cong\mathbb{S}^1\times N$, with $N$ irreducible, and where $M$ is isometric to a product of surfaces such that one of them is endowed with a metric of the family described by Theorem \ref{bachpar22}. More precisely, we prove the following
	\begin{theorem} \label{bach3cott}
		Let $(M,g)$ be a complete Riemannian manifold 
		of dimension $n\geq 4$. Assume that $\nabla\bach\equiv 0$ on $M$: if 
		$\operatorname{C}\equiv 0$, then either $(M,g)$ is Bach-flat or its universal cover is isometric to a Riemannian product of harmonic curvature manifolds.
		In particular, if $n=4$, the following cases can occur:
		\begin{enumerate}
			\item $(M,g)$ is Bach-flat;
			\item $(M,g)$ is covered by a product of surfaces with constant curvatures $K_1$ and $K_2$ such that $K_1\neq\pm K_2$. 
		\end{enumerate}
	\end{theorem}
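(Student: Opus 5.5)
Pass to the universal cover $(\tilde M,\tilde g)$, which is complete and simply connected and still has $\nabla\bach\equiv0$ and $\operatorname{C}\equiv0$. By Theorem \ref{splittingbach}, $\tilde M\cong\prod_{i=1}^k(M_i,g_i)$ with $\bach\restrict{M_i}=c_ig_i$. If all $c_i$ coincide, then $\bach=c\,g$; since $\bach$ is trace-free, $c=0$ and $g$ is Bach-flat. So assume at least two distinct eigenvalues. After grouping factors with equal $c_i$, the eigenspace distributions $E_i$ of $\bach$ are parallel, mutually orthogonal and at least two in number. The task is to show that each factor has harmonic curvature.

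The key step is local and in the spirit of G\l{}odek. Differentiate the parallel tensor twice and use the Ricci identity,
\[
0=B_{ij,kl}-B_{ij,lk}=B_{pj}R_{pikl}+B_{ip}R_{pjkl}.
\]
For $e_i\in E_\alpha$ and $e_j\in E_\beta$ this gives $(c_\alpha-c_\beta)R_{jikl}=0$. Hence $R_{ijkl}=0$ whenever the first pair is mixed, and by symmetry also the second pair. This is just the product structure of the curvature. Next, bring in $\operatorname{C}\equiv0$, which says the Schouten tensor of $g$ is Codazzi. On a product, $\ricc=\oplus\ricc_i$ and $S=\sum S_i$, with $S_i$ depending only on the $i$-th factor. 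Write $C_{ijk}=P_{ij,k}-P_{ik,j}$ with $i,j\in E_\alpha$ and $k\in E_\beta$, $\beta\ne\alpha$. Since $R_{ij}$ of factor $\alpha$ is constant along $E_\beta$, we get
\[
0=C_{ijk}=-\tfrac{1}{2(n-1)}S_k\,\delta_{ij},
\]
so $\nabla S$ has no component along any $E_\beta$. Because there are at least two factors, every direction is transverse to some other factor, hence $S$ is constant. Then $\ricc$ is Codazzi, and restricting to indices inside one factor gives that $\ricc_i$ is Codazzi on $M_i$. Thus each $(M_i,g_i)$ has harmonic curvature; for one-dimensional factors this is vacuous.

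In dimension four, the possible splittings are $2+2$, $1+3$, $1+1+2$ and $1+1+1+1$. A surface factor has harmonic curvature if and only if $K_i$ is constant. For $1+3$, the three-dimensional factor $N$ has $\ricc_N$ Codazzi with constant scalar curvature, so $\mathrm{C}_N=0$ and $N$ is locally conformally flat. I would then use \eqref{bach13comp} together with $B_{ij}=c\delta_{ij}$ and $B_{11}=-3c$. Since $S$ is constant, $B_{11}=-3c$ gives $\abs{\operatorname{E}}^2$ constant. Since $\ricc$ is Codazzi with $\nabla S=0$, $\Delta R_{ij}$ reduces via the Weitzenb\"ock identity to curvature terms. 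Plugging into \eqref{criticaleqft} forces $\ricc_N$ to have constant eigenvalues, and the Codazzi condition in dimension three should then force either $N$ Einstein, hence of constant curvature, or a product $\RR\times$ surface. In the first case $\SS^1\times N$ is conformally flat, hence Bach-flat. In the second case we return to the surface case, and this also covers the splittings with several one-dimensional factors. Finally, for products of constant-curvature surfaces, $K_1=\pm K_2$ gives Bach-flatness, as in the proof of Theorem \ref{bachpar22}, leaving exactly case (2).

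I expect the main obstacle to be the $1+3$ case: ruling out non-trivial three-dimensional harmonic-curvature factors with parallel but nonzero Bach tensor. My first attempt is the algebraic reduction above, using $\abs{\operatorname{E}}^2$ constant and the trace of \eqref{criticaleqft} to derive $\operatorname{tr}(\operatorname{E}^3)$ constant, hence constant Ricci eigenvalues. I would then use the classification of three-manifolds with Codazzi Ricci tensor and constant Ricci eigenvalues.
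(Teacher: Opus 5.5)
Your argument is correct, but it departs from the paper's proof at two points.

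\textbf{Constancy of $S$.} The paper does not split first. It differentiates the G\l{}odek identity $B_{pj}R_{pikl}+B_{ip}R_{pjkl}=0$, traces it, and uses the second Bianchi identity with $\operatorname{C}\equiv0$ to get $B_{jp}S_p=0$ and then $\abs{\bach}^2\abs{\nabla S}^2\equiv0$. Only afterwards does it invoke Theorem \ref{splittingbach}. You split first and read $S_k=0$ off the mixed Cotton components $C_{ijk}$, with $i,j\in E_\alpha$ and $k\in E_\beta$, using that $\nabla\ricc$ has no mixed components on a product. Your version is shorter and just as general: it applies verbatim to any parallel trace-free symmetric tensor, as in Remark \ref{remglodek}. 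The paper's computation is frame-free and does not need the product structure. As you note, your own G\l{}odek step is redundant.

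\textbf{The $1+3$ case.} The paper assumes $N$ irreducible and differentiates \eqref{bach13comp} to get $R^2_{ij,k}=\frac56 S\,R_{ij,k}$. Hence $\operatorname{E}$ and $\operatorname{E}^2$ are both Codazzi, and Umehara's theorem gives $\nabla\operatorname{E}\equiv0$; the reducible case is handled separately. You keep the undifferentiated identity instead. Inserting $\Delta R_{ij}=3R^2_{ij}-\frac32 S R_{ij}-(\abs{\ricc}^2-\frac12 S^2)\delta_{ij}$ into \eqref{criticaleqft} gives the pointwise relation $\ricc^2-\frac56 S\,\ricc=\pa{\frac13\abs{\ricc}^2-\frac{5}{18}S^2}g_N$, with constant coefficients. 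So the Ricci eigenvalues and their multiplicities are constant. A Codazzi tensor with constant eigenvalues is parallel: for $X,Y$ in the $\lambda$-eigenspace and $Z$ in the $\mu$-eigenspace, $(\lambda-\mu)\langle\nabla_XY,Z\rangle=(\nabla_X\ricc)(Y,Z)=(\nabla_Z\ricc)(X,Y)=0$. Thus the eigendistributions are autoparallel, hence parallel. This route avoids Umehara's theorem and treats reducible and irreducible $N$ at once.

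\textbf{Two small fixes.}
\begin{enumerate}
\item The trace of \eqref{criticaleqft} vanishes identically. To get $\operatorname{tr}(\operatorname{E}^3)$ constant you must contract with $R_{ij}$ instead, or simply use the matrix identity above.
\item Write out the short Codazzi argument above rather than appealing to a ``classification'' of three-manifolds with Codazzi Ricci tensor and constant Ricci eigenvalues.
\end{enumerate}
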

	\begin{proof}
		Up to passing to the universal cover, we can assume 
		that $M$ is simply connected. Suppose that 
		$\bach\not\equiv 0$: we want to 
		adapt a strategy due to 
		G\l{}odek \cite{glodek}.
		Since $\bach$ is parallel, we use again
		the commutation formulas for the second derivatives
		of $(0,2)$-tensors to obtain
		\[
		0=B_{ij,kl}-B_{ij,lk}=
		B_{pj}R_{pikl}+B_{ip}R_{pjkl}.
		\]
		Differentiating the previous equation, we have
		\[
		0=B_{pj}R_{klpi,t}+B_{ip}R_{klpj,t}:
		\]
		tracing with respect to $k$ and $t$, using
		the Ricci identity and the vanishing of $\operatorname{C}$ we obtain
		\begin{align*}
			0&=B_{pj}R_{klpi,k}+B_{ip}R_{klpj,k}=\\
			&=B_{pj}(R_{li,p}-R_{lp,i})+B_{ip}(R_{lj,p}-R_{lp,j})=\\
			&=\dfrac{1}{2(n-1)}\sq{B_{pj}(S_p\delta_{li}-S_i\delta_{lp})+B_{ip}(S_p\delta_{lj}-S_j\delta_{lp})},
		\end{align*}
		i.e.
		\begin{equation} \label{eqinterm}
			B_{jp}S_p\delta_{ik}-B_{jk}S_i+B_{ip}S_p\delta_{jk}-B_{ik}S_j=0.
		\end{equation}
		Since $\bach$ is trace-free, tracing with
		respect to $i$ and $k$ we get
		\begin{equation} \label{vanishscalbach}
			B_{jp}S_p=0.
		\end{equation}
		Inserting this into \eqref{eqinterm}, we 
		have
		\[
		B_{jk}S_i+B_{ik}S_j=0
		\]
		and, contracting with $S_j$ and using again
		\eqref{vanishscalbach}, we conclude that
		\[
		B_{ik}\abs{\nabla S}^2=0:
		\]
		in particular,
		\begin{equation} \label{bachscal}
			\abs{\bach}^2\cdot\abs{\nabla S}^2\equiv 0.
		\end{equation}
		Suppose that there exists $p\in M$ such that 
		$\abs{\nabla S}^2\neq 0$ at $p$. By continuity, 
		$\abs{\nabla S}^2\neq 0$ in an open neighborhood
		$U$ of $p$: by \eqref{bachscal}, this means that
		$\abs{\bach}^2\equiv 0$ on $U$. However, $\nabla\bach\equiv 0$ on $M$, which means that 
		$\abs{\bach}^2$ is a constant function on $M$: therefore, $\bach\equiv 0$ on $M$, which is a contradiction, hence $S$ is constant and, therefore, $(M,g)$ has harmonic curvature.  
		
		By Theorem \ref{splittingbach}, since 
		$\bach\not\equiv 0$ we have that $(M,g)$ is 
		isometric to a Riemannian product: without loss
		of generality, we can assume that
		\[
		M\cong (N_1^k\times N_2^{n-k}, g_1\oplus g_2).
		\]
		Since $(M,g)$ has harmonic curvature, we can also 
		immediately notice that $(N_i,g_i)$ has harmonic curvature. Furthermore, Theorem \ref{splittingbach}
		also implies that
		\[
		B_{ij}=(n-k)C\delta_{ij}, \quad 
		B_{ia}=0, \quad B_{ab}=-kC\delta_{ab},
		\]
		where $C\in\RR$, $i,j=1,2,...,k$ and 
		$a,b=k+1,...,n-k$. 
		
		Now, let $n=4$ and assume that $k=2$, where 
		$(N_1,g_1)$ and $(N_2,g_2)$ are 
		Riemannian surfaces. Since both manifolds have
		harmonic curvature, the scalar curvatures of 
		$g_1$ and $g_2$ must be constant: this immediately
		implies that the Gaussian curvatures $K_1$ and $K_2$ are constant, with $K_1\neq\pm K_2$, as
		observed in the proof of Theorem \ref{bachpar22}.

		Now we assume that $(M,g)$ is isometric to $(\mathbb{S}^1\times N\, , g_{\mathbb{S}^1}\oplus g_N)$, where $(N,g_N)$ is a three-dimensional, irreducible Riemannian manifold. By \eqref{bach13comp}, since ${}^NS$ is constant and $\nabla\bach\equiv 0$, we have that
		\[
		2B_{11}=-\dfrac{1}{2}\abs{{}^N\operatorname{E}}^2 
		\Longra {}^N\nabla\abs{{}^N\operatorname{E}}^2\equiv 0;
		\]
		furthermore, since $\operatorname{C}\equiv 0$, 
		both ${}^N\ricc$ and ${}^N\operatorname{E}$ are
		Codazzi tensors. 
		We use the fact that
		${}^NS$ and $\abs{{}^N\ricc}$ are constant and the Codazzi condition on ${}^N\ricc$ to obtain
		\begin{align*}
			(\Delta_N{}^NR_{ij})_k&=
			{}^NR_{ij,ttk}={}^NR_{it,jtk}=\pa{{}^NR_{pt}{}^NR_{pijt}+{}^NR_{ip}{}^N
				R_{ptjt}}_k=\\
			&=\sq{{}^NR_{pt}
				\pa{{}^NR_{pj}\delta_{it}-{}^NR_{pt}\delta_{ij}+{}^NR_{it}\delta_{pj}-{}^NR_{ij}\delta_{pt}-
					\dfrac{{}^NS}{2}(\delta_{pj}\delta_{it}-
					\delta_{pt}\delta_{ij})}+{}^NR_{ij}^2}_k=\\
			&=3{}^NR_{ij,k}^2-\dfrac{3}{2}{}^NS\cdot{}^NR_{ij,k};
		\end{align*}
		differentiating the second equation in \eqref{bach13comp} and using the previous identity, we get
		\[
		R_{ij,k}^2=\dfrac{5}{6}{}^NS\cdot{}^NR_{ij,k},
		\]
		which, by the fact that ${}^NS$ is constant, yields
		\[
		E_{ij,k}^2=-\dfrac{1}{6}{}^NS\cdot{}^NE_{ij,k}.
		\]
		Hence, we immediately have
		\[
		{}^NE_{ij,k}^2-{}^NE_{ik,j}^2=
		-\dfrac{1}{6}{}^NS({}^NE_{ij,k}-{}^NE_{ik,j})=0:
		\]
		hence, ${}^N\operatorname{E}$ is a trace-free, symmetric, Codazzi tensor such that 
		${}^N\operatorname{E}^2$ is Codazzi as well, which, by \cite[Theorem 1.3]{umehara}, implies
		that ${}^N\nabla{}^N\operatorname{E}\equiv 0$. Since we are assuming that $N$ is irreducible, by Theorem \ref{splittingbach} applied to $N$ and $\operatorname{T}=\operatorname{E}$, we obtain that $(N,g_N)$ is an Einstein manifold, but this is impossible: indeed, this would imply that $(M,g)$ is locally conformally flat and, hence, Bach-flat, which contradicts our hypothesis.
		Therefore, by Theorem \ref{splittingbach}, 
		\[
		(M,g)\cong(\RR^2\times P^2, g_{\RR^2}\oplus g_P),
		\]
		where $(P^2,g_P)$ is a simply connected Riemannian surface with
		non-vanishing Gaussian curvature and this
		concludes the proof. 
	\end{proof}
	\begin{rem} \label{remglodek}
		We point out that the first part of the proof 
		of Theorem \ref{bach3cott} is purely algebraic: hence, by adapting the same argument, we can conclude that, given a complete Riemannian manifold $(M,g)$ of dimension at least $4$ with vanishing Cotton tensor and a trace-free, symmetric $(0,2)$-tensor $\operatorname{T}$ on $M$, if $\operatorname{T}$ is parallel then either $\operatorname{T}\equiv 0$ or the manifold splits as a product of harmonic curvature manifolds.
	\end{rem}
	\bibliographystyle{abbrv}
	\bibliography{Nabla_Bach_biblio}
\end{document}